\keywords{Fano threefold, Hilbert scheme.} 
\subjclass{}
\theoremstyle{plain}
\newtheorem{thm}{Theorem}[subsection]
\newtheorem{prop}[thm]{Proposition}
\newtheorem{lem}[thm]{Lemma}
\theoremstyle{definition}
\newtheorem{conj}[thm]{Conjecture}
\newtheorem{prob}[thm]{Problem}
\newtheorem*{ackn}{Acknowledgements}
\newtheorem{rem}[thm]{Remark}
\newcommand{\sH}{\mathcal{H}}
\newcommand{\sN}{\mathcal{N}}
\newcommand{\sM}{\mathcal{M}}
\newcommand{\sO}{\mathcal{O}}
\newcommand{\sQ}{\mathcal{Q}}
\newcommand{\sU}{\mathcal{U}}
\newcommand{\mG}{\mathbb{G}}
\newcommand{\mP}{\mathbb{P}}
\newcommand{\Q}{Q}
\newcommand\K{\mathbb{K}}
\newcommand\Z{\mathbb{Z}}
\newcommand{\A}{\mathbb{A}}
\numberwithin{equation}{section}
\author{Pietro Corvaja Francesco Zucconi}
\address{\newline D.M.I.F. \\
Univerit\`a degli studi di Udine\\
Udine, 33100, Italy.\newline
\texttt{pietro.corvaja@uniud.it\newline
francesco.zucconi@uniud.it}}
\begin{document}

\begin{center}
\textbf{\Large
On integral points of some Fano Threefolds and their Hilbert schemes of lines and conics\\}
\par
\end{center}
{\Large \par}

$\;$

\begin{center}
Pietro Corvaja, Francesco Zucconi 
\par\end{center}

\vspace{5pt}

$\;$

Abstract. {\it{We prove some density results for integral points on affine open sets of Fano threefolds. For instance,  let $X^o=\mathbb P^3\setminus D$ where $D$ is the union of two quadrics such that their intersection contains a smooth conic, or the union of a smooth quadric surface and two planes, or the union of a smooth cubic surface $V$ and a plane $\Pi$ such that the intersection $V\cap\Pi$ contains a line. In all these cases we show that the set of integral points of   
$X^o$ is potentially dense. We apply the above results to prove that integral points are potentially dense in some log-Fano or in some log-Calabi-Yau threefold.}}

\vspace{0.5cm}

%\maketitle

%\markboth{Francesco Zucconi}{Canonical surfaces}

%\tableofcontents

\section{Introduction} 

\subsection{General overview}
Let $ {X}\subset\mathbb P^n$ be a projective algebraic variety over the number field $\mathbb K$ and let $D$ be a closed algebraic subset. Set $X^o:= {X}\setminus D$, which is a quasi projective (possibly affine) algebraic variety.  A rational point $P\in X^o(\mathbb K)$ is said to be integral with respect to $D$ if for no finite place $\nu$ of $\mathbb K$ the point $P$ reduces to $D$ modulo $\nu$. More generally, if $S$ is a finite set of places containing the archimedean ones, a point is said to be $S$-integral with respect to $D$ if the same condition holds for every place $\nu$ not contained in $S$.  Whenever $X^o$ is affine, say embedded into $\A^n$, this set can be identified with the set of points of $X$ whose coordinates belong to the ring of $S$-integers $\sO_S\subset \K$. Hence we shall  denote by $X^o(\sO_S)$ the set of $S$-integral points of $X$. Note that whenever $D$ is empty, that is $X^o=X$, the set  $X^o(\sO_S)$ coincides with the set $X(\K)$ of rational points.
\smallskip

\subsection{Vojta's conjecture}  
A celebrated conjecture of Vojta,  see: \cite[Main Conjecture 3.4.3 and Proposition 4.1.2]{vojta},  predicts that if ${X}$ is smooth and $D$ is a reduced effective divisor, with at most normal crossing singularities and such that $K_{X}+D$ is   big, where $K_{{X}}$ denotes a canonical divisor of ${X}$, then for every ring of $S$-integers $\sO_S\subset\mathbb K$, the $S$-integral points on $X^o=X\setminus D$ are not Zariski-dense.  Note that the bigness assumption for $K_{{X}}+D$ turns out to be independent of the smooth compactification of $X^o$, if $D$ has at most normal crossing singularities.

\subsubsection{The $1$-dimensional case of Vojta's conjecture} Vojta's Conjecture is completely settled in the case of curves, in view of theorems of Siegel and Faltings; moreover, in this case, a converse holds true: if for a pair $({X},D)$, where ${X}$ is a smooth complete curve and $D\subset X$ is a reduced divisor (a finite set), such that $K_X+D$ in  not big then there exists a ring of $S$-integers (possibly in a finite extension of the field of definition $\K$) such that $X^o(\sO_S)$ is Zariski-dense (i.e. an infinite set). Actually such pairs are necessarily of one of the following four classes: $X=\mP^1$ and $|D|=0,1,2$ or $X$ a genus one curve and $D=\emptyset$. In each case $X^o$ is a homogenous space under the action of an algebraic group; see: c.f. \cite{C-libretto-verde}.

\subsubsection{The $2$-dimensional case of Vojta's conjecture} Already in dimension two, the conjecture is widely open. For an open set of the projective plane, Vojta's conjecture predicts degeneracy of integral points whenever the divisor at infinity $D$ has degree $\geq 4$ (and at most normal crossing singularities). An application of the Schmidt's Subspace Theorem in Diophantine approximation leads to this degeneracy result whenever $D$ has at least four component. The same conclusion holds for the complement of four  ample divisors on any algebraic surface (proved by A. Levin \cite{levin}, based on the work of U. Zannier and the first author \cite{CZ-Annals}, ultimately relying on the Subspace Theorem). 
Several other special cases are proved but  already the case of the complement of a plane quartic with $\leq 3$ components is still open. 

\subsubsection{Vojta's conjecture and semi-abelian varieties} A general result, based on the work of Faltings and Vojta on the distribution of integral points on semi-abelian varieties, proves the degeneracy of integral points on a quasi-projective variety $X^o $ whenever its generalized Albanese variety has dimension strictly larger than $\dim X$. Recall that the generalized Albanese variety of $X ^o$ is a semi-abelian variety $A$, provided  with a morphism $X ^o \to A$ factoring any other morphism from $X $ to any semi-abelian variety.

\subsubsection{Campana' s conjecture and a converse for Vojta's conjecture} A conjecture of Campana provides a converse for Vojta's Conjecture: whenever a variety is \lq special\rq, in a sense which we shall not discuss here, its integral (or rational) points should be potentially dense, which means that they become Zariski-dense after a  finite extension of the ground field (possibly followed by an enlargment of the finite set $S$).

In the above mentioned case of the complement of a normal crossing divisor $D$ in $\mP^2$, a variety is special if and only if $\deg D\leq 3$. In these cases, potentially density of integral points is known.

For a survey on this topic, especially for the case of dimension two, we refer to \cite{C-libretto-verde}. 

\subsection{The $3$-dimensional projective space}
In this paper we study the problem of potential density of integral points for certain open subsets of some rational Fano $3$-folds.

\subsubsection{Log Fano and of log Calabi-Yau varieties} We say that an affine variety $X^o$ is {\it log Fano} if it can be obtained as $X^o= {X}\setminus D$, where $-K_{ {X}}-D$ big. We say that $X^o$ is {\it log Calabi-Yau} if $X^o= {X}\setminus D$ where $D$ lies in the anti-canonical class (so 
$K_{ {X}}+D\sim 0$). In both cases, the integral points are conjectured to be potentially dense.
\smallskip

In dimension two, for every log-Fano (or log  del Pezzo) surface and every affine log Calabi-Yau (or log-$K3$) surface potential density is well studied. Clearly, the crucial case lies in the log-$K3$ case, which is due to Hassett-Tschinkel \cite{Hassett-Tschinkel}, after  
the fundamental work by F. Beukers \cite{Beukers}, (to be precise, Hassett-Tschinkel proved potential density for a surface of the form $ X\setminus D$, where $X$ is a del Pezzo surface and $D$ is a smooth curve in the anti-canonical class. As it usually happens, however, the case of a singular - possibly reducible - divisor, still with normal crossing singularities, is easier. The details have been provided by S. Coccia).

\subsubsection{Our results on the $3$-dimensional projective space}  We stress that after Siegel's theorem on integral points on curves (see e.g. \cite[Theorem 3.3.1]{C-libretto-verde} or \cite[Theorem 3.9]{CZ-libro}), the only affine curves admitting infinitely many integral points are those of genus zero with one or two points at infinity (namely the log-Fano and log-Calabi-Yau in dimension one!). Now, a way to prove density of integral points on higher dimensional affine varieties consists in covering the variety (or a Zariski-open subset of it) with rational curves with just one or two points at infinity. These curves might possess infinitely many integral points, giving rise to a Zariski-dense set on the variety; see Lemma \ref{lemma-beukers}, which we have called Beukers Lemma. 

Hence, it might be interesting to describe the Hilbert scheme of lines or of conics on the given Fano variety, where by lines (resp. conics) we mean those curves which are sent to lines (resp. conics) under a suitable embedding. If the divisor $D$ at infinity in such an embedding consists on a hyperplane section (resp. the union of two hyperplane sections), then the lines and the conics (resp. the lines only) are expected to possess infinitely many integral points. This last fact however holds only after a suitable extension of the ground field and/or of the set of places $S$. 
%Indeed we will review almost all Hilbert schemes of lines or of conics to confirm that they are of little help for our methods. 

In fact in paragraph \ref{S.fully-integral} we shall introduce the new notion of {\it fully integral} rational curve; these rational curves form a family of curves each possessing infinitely many integral points over any ring of $S$-integers with infinitely many units. In some cases, the potential density of integral points on Fano threefolds follows from the existence of a sufficiently large family of fully integral curves. 
\smallskip

Let us come back to Fano threefolds. In the particular case where $X=\mathbb P^3$, $D$ is a surface (with normal crossing singularities) and $X^o=\mP^3\setminus D$, it holds that $X^o$ is log-Fano whenever $\deg D\leq 3$ and log-Calabi-Yau for $\deg D=4$. 
The case of log-Fano ($\deg D\leq 3$) is probably well-known; however, in some very simple cases we know no reference for a proof; for instance our Theorem \ref{cubicandplanewithacommonline} settles the case for the complement of a smooth cubic surface in $\mP^3$ in a stronger form.

The crucial log-Calabi-Yau case, $\deg D=4$, is still open. We believe that when $D$ is smooth (in particular irreducible) it consists in a deep problem; even its analogue in complex-analysis, namely the existence of a Zariski-dense entire curve in $\mP^3$ omitting $D$, is still unknown. To have an idea of the limitations of the straight lines technique mentioned above, consider the basic case where $D$ is a smooth quartic surface. The integral points on $X^o=\mathbb P^3\setminus D$ correspond also (via the Chevalley-Weil Theorem - see \cite{CTZ} for a modern treatment) to the integral points on the double solid $X'$ branched over $D$ with respect to a divisor $D'$ (the pre-image of $D$). The lines on $X'$ correspond to the bitangents (in $\mP^3$) to $D$; however, the attempt to prove the potential density of integral points on $X'$ or on ${X'}^o:=X'\setminus D'$ by constructing integral points on such lines is  doomed to failure, since there exist only finitely many such lines which are defined over a given number field (this is  Theorem B in our recent work \cite{CoZucc-secondo}). 

Even the case where $D$ is the union of two quadrics $Q_1$, $Q_2$ is not fully solved. In this context we have a natural notion of bitangent line to $Q_1\cup Q_2$. The surface of bitangents turns out to be the union of a Kummer surface and a ruled surface with elliptic base; in both surfaces, rational points are proved to be Zariski-dense. However, as we shall  explain in Theorem \ref{few-fully-integral}, the  fully integral curves  are not enough to generate a Zariski-dense set of integral points: this is proof of the fact that the method of using straight lines has strong limitations. Nevertheless, when the two quadrics intersect in the union of two conics, then the construction of integral points based on the family of fully integral curves does work and leads to:
\medskip

\noindent
{\bf{Theorem [A]}}{\it{
Let $X^0$ be the complement of $\mathbb P^3$ by the union of two quadrics such that their intersection contains a smooth conic. Then the integral points of $X^o$ are potentially dense.}}

\medskip

We provide two proofs of the above theorem, the first one exploiting a two-dimensional family of conics on $X^o$, which turn out to be fully integral. 
We think quite interesting to compare the arithmetical proof of the above Theorem [A], see the proof of Theorem \ref{two-quadrics-two concis}, which is grounded on the Beukers' result recalled in Lemma \ref{lemma-beukers}, with a proof based on the birational geometry, as the one of Theorem \ref{complement by two quadrics special case}.

For the case where $D$ has three components (necessarily a quadric and two planes) the conjecture is settled:
\medskip

\noindent
{\bf{Theorem [B]}}{\it{
Let $X^o$ be the complement of $\mathbb P^3$ by the union of a smooth quadric surface $Q$ and two planes $\Pi_1$, $\Pi_2$ such that the intersection $Q\cap\Pi_1\cap\Pi_2$ is proper (i.e. it consists of two points).
Then the integral points on $X^o$ are potentially dense.}}
\medskip

See: Theorem \ref{quadricaeduepiani}.

In Theorem \ref{cubicandplanewithacommonline} we consider a special log-Calaby-Yau case:

\medskip

\noindent
{\bf{Theorem [C]}}{\it{
Let $X^o$ be the complement of $\mathbb P^3$ by the union of a smooth cubic surface $V$ and a plane $\Pi$, such that the intersection $V\cap\Pi$ contains a line.Then the integral points on $X^o$ are potentially dense.}}
\medskip

Actually the general case of the complement of a smooth cubic $V$ and a plane $\Pi$ is still unknown even if in Conjecture \ref{complement by plane and cubic} we favor an affirmative answer.

\subsection{Other rational Fanos $3$-folds}
Notwithstanding the above results are special, they also provide results for open sets of other rational Fanos. In this work we present some cases which should shed some light on the full topic. For example, in \cite[Theorem 1.1. Table 1]{BL} are collected those Fano threefolds $A$ obtained by blowing-up $\mathbb P^3$ along a curve $\Gamma$ and such that $-K_A$ is big and nef. A systematic study on the integral points for all the Fanos obtainable in this way is beyond the aims of this work. Actually those ones where $-K_A$ is ample and the use of Proposition \ref{quadricaeduepiani} is quite straightforward are studied in this work. Indeed it seems us that, at least in the case where $-K_A$ is ample, the images of the various exceptional divisors have a degree too big to apply successfully Proposition \ref{quadricaeduepiani}, except in those cases treated in this paper. We intend to study some of the remaining cases, that is when $-K_A$ is big and nef, that is the case where $A$ is a weak Fano, in a future project.

\subsubsection{The quadric threefold}
The quadric threefold $Q_3\subset\mathbb P^4$ has index $3$. We have:

\medskip

\noindent
{\bf{Theorem [D]}}{\it{The integral points in the complement of the smooth quadric threefold by three hyperplane sections or by a quadric section are potentially dense.}}
\medskip

See Proposition \ref{treiperpiani} and Proposition \ref{dueiperpiani}. Actually by projection from a point of the smooth quadric the proof is reduced to the case of the complement of $\mathbb P^3$ by four projective planes which is $\mathbb G_M^3$ and in this case potential density is known. Strangely enough the proof of the log-Fano case where $D$ is a smooth quadric section is harder; see: Proposition \ref{dueiperpiani}. The projection from a point $P\in D$ reduces to the case of the complement of $\mathbb P^3$ by the union of a smooth cubic $V$ and a plane $\Pi$ which have a line in common. As we mentioned above we prove in Theorem [C] that the corresponding set of integral points is potentially dense. 

\subsubsection{Rational Fanos with Picard number $1$ and index $1$ or $2$}
Basically, we shall be interested in the following situation: given a rational Fano threefold of index $1$ and with Picard number $1$ embedded in a projective space via its anti-canonical line bundle, study the integral points in the complement of one hyperplane section. Whenever the Fano threefold has index $2$, we shall consider both the complement of one hyperplane section (a log-Fano variety) and the complement of two hyperplane sections (log-Calabi-Yau); needless to say, the proof of the potential density of integral points in the last case is usually more difficult.

If $X=Q_1\cap Q_2\subset\mathbb P^5$  is the smooth quadric complex we fully solve the log-Fano case and the log-Calabi-Yau one only in special cases.
\medskip

\noindent
{\bf{Theorem [E]}}{\it{The set of integral points of the complement of a smooth quadric complex in $\mathbb P^5$ by an hyperplane section or by two hyperplane sections which share a common line is potentially dense.}}
\medskip

\noindent
See Proposition \ref{quadriccomplexCY} and Proposition \ref{quadcompline}.

If $X$  is the del Pezzo threefold again we can show the log-Calabi-Yau case in the special case where the two hyperplane sections have a line in common. \medskip

\noindent
{\bf{Theorem [F]}}{\it{The set of integral points of the complement on the del Pezzo threefold in $\mathbb P^6$ by two hyperplane sections which have a smooth conic in common is potentially dense.}}
\medskip

\noindent
See Theorem \ref{delpezzoconica}. Actually by projection from the common conic the proof is reduced to Theorem [B]. Clearly Theorem [F] implies the log-Fano case and again we provide also an arithmetical proof of this last case, see sub-section \ref{casodidelpezzomenouno}. This arithmetical proof relies on the fact that the Hilbert scheme of lines on the del Pezzo $3$-fold is $\mathbb P^2$ and that the set of integral points of the complement of $\mathbb P^2$ by the algebraic closed set formed by the union of three lines in general position and a finite set of points is potentially dense; see: Lemma \ref{lemma-punture}.
Finally we can show that the subset given by the integral points of the complement of a cubic threefold with an ordinary double point by two hyperplane sections passing through the singular point is potentially dense; see: Proposition \ref{cubic threefold}.

\subsection{Integral points and the Hilbert scheme of lines and conics}

As we saw above, in some cases we can show the density of integral points on higher dimensional affine varieties if there are many rational curves with just one or two points at infinity. Indeed the study of the Hilbert scheme of lines and of conics is a natural step to tackle with our problems on the density of integral points. 
The first section of the paper concerns a result, as interesting as easy to be proved, on the arithmetic of Hilbert scheme of lines and of conics for almost all deformation classes of smooth Fano threefolds.  At a first reading, the reader can skip to read Section 1 without prejudice to his understanding of the rest. Indeed we show by a case by case check, that, letting aside very special deformation classes, the subset of rational points of the Hilbert schemes of lines or of conics is degenerate except in the obvious cases. In Remark \ref{moltoimportante} we propose the new notion of $\mathbb K$-standard Fano. This notion is crucial to adapt to the arithmetical study of Fanos the property of a Fano $X$ to be generic in its deformation class when one works over $\mathbb C$. For the Hilbert scheme of lines $\Sigma(X)$ of a $\mathbb K$-standard Fano $X$ of index $1$ or $2$ and Picard number $\rho=1$, we show in Proposition \ref{aritmetica1} that the set of its $\mathbb K$-rational lines is degenerate except if $X$ is a quadric complex or the del Pezzo $3$-fold of degree $5$ and in these last two cases this set is dense. Instead for the Hilbert scheme of conics, we show in Proposition \ref{aritmetica2} that the set of its $\mathbb K$-rational conics is degenerate except if $X$ is the del Pezzo $3$-fold of degree $5$ or $X$ has genus $10$ or $12$ where density holds. As a by-product of our review is a new problem, to our knowledge never raised in the literature, about the size of the image of the Abel-Jacobi map, which we point out the reader in Problem \ref{problemaaperto}.

\section{Hilbert Schemes of Lines and Conics}
This section is taken from \cite[Sect. 4.1]{IP}, \cite[Sect. 2]{KPS} and we have followed the presentation of \cite[Sec. 5]{CZ} and \cite[Subsection 3.1]{IM1}.

\subsection{Fano Threefolds}
A smooth Fano threefold $X$ is a smooth threefold with ample anti-canonical divisor. To classify Fano threefolds means that given the values of certain biregular invariants of $X$ then $X$ belongs to a class of smooth varieties which can be described in a very explicit geometrical way. 

The Fano-Iskovskikh classification relies on the following $5$ biregular invariants of $X$:
$$
\rho(X),\, g(X), \,\iota(X),\, d(X),\, m_0(X)
$$
\noindent
where $\rho(X)$ is the {\it{Picard rank}} of $X$. By c.f. \cite[Proposition 2.1.2]{IP} $\rm{Pic}(X)$ is the torsion free abelian group of rank $\rho$, $g(X)$ is {\it{the genus}} and it holds:
$$
g(X) = \frac{(-K_X )^3}{2} + 1 = {\rm{dim}}_\mathbb C \mid - K_X \mid -1 \geq 2
$$ 
\noindent
The number $\iota(X)$ is called {\it{the index}}. It is the maximal natural number such that for the class $[- K_X]\in \rm{Pic}(X)$ it holds that $[- K_X]=\iota(X)[H]$ where $H$ is an ample divisor; $d(X)=H^3$ is {\it{the degree of}} $X$ and finally $m_0(X)$ is {\it{the Matsusaka constant}}, that is the least integer such that $m_0(X)[H]$ is the class of a very ample divisor.
Since $g(X) = {\frac{\iota(X)^3d(X)}{2}} + 1$ then it holds that $d(X) =2g(X) -2$ if and only if $\iota(X) = 1$. We remark that by Mori-Mukai's classification of smooth Fano threefolds of rank $\rho\geq 2$ there are 87 families of such Fano threefolds, \cite{MM1}, \cite{MM2}, \cite{MM3}. As far as we know there is not an available classification of their Hilbert schemes of lines. From now on we consider Fano threefolds with Picard rank equal to $1$. 
\subsection{Smooth Fano threefolds with $\rho(X)=1$} These threefolds are distributed into $17$ deformation families. They can be grouped according to the index $\iota(X)$ which is a positive natural number less than $4$. If $\iota(X)=4$ then $X=\mP^3$ and if $\iota(X)=3$ then $X$ is the smooth quadric in $\mP^4$, see c.f. \cite[Theorem 3.1.14, and Proposition 3.1.15]{IP}. Below we consider the cases where $\iota(X)=1$ or $\iota(X)=2$. 
\subsubsection{Smooth Fano threefolds with $\rho(X)=1$ and $\iota(X)=2$} 
The $5$ families of Fano threefolds with $\iota(X)=2$ and $\rho(X)=1$  are classified according to the anticanonical degree 
$(-K_{X} ) ^{3}=8d$ where $d=1, 2, 3, 4, 5$ as follows: 

\begin{enumerate}
\item $d=1$, $X$ is a hypersurface in $\mP(1, 1, 1, 2, 3)$ of degree $6$;
\item $d=2$, $X$ is the double cover of $\mP^3$ branched in a quartic surface;
\item $d=3$, $X$ is a cubic hypersurface in $\mP^4$;
\item $d=4$, $X$ is the complete intersection of two quadrics in $\mP^5$;
\item $d=5$, $X=B_5$ is a transversal section of $\mathbb G(2, 5)\subset\mP^9$ by a linear subspace of codimension $3$.
\end{enumerate}
We stress that  by a result of Iskovskikh in the case $d=5$, $B_5$ is unique (up to biregular morphisms); see: c.f. \cite[Corollary 3.4.2.]{IP}.

\subsubsection{Smooth Fano threefolds with $\rho(X)=1$ and $\iota(X)=1$} 
The $10$ families of Fano threefolds with $\iota(X)=\rho(X)=1$  are classified according to the genus $g$ as follows:

\begin{enumerate}
\item $g = 2$, $X$ is a double cover of $\mP^3$ branched over a smooth sextic surface;
\item $g = 3$, $X\subset\mP^4$ is a smooth quartic threefold, or a double cover of a smooth quadric in $\mP^4$ branched in an intersection with a quartic;
\item $g = 4$, $X\subset\mP^5$ is a complete intersection of a quadric and a cubic;
\item $g = 5$, $X\subset\mP^6$ is a complete intersection of three quadrics;
\item $g = 6$, $X$ is section of $\mathbb G (2,5)\subset\mP^9$ by a linear subspace of codimension $2$ and a quadric, or $X$ is a double cover of the Fano threefold $B_5$ branched in an anticanonical divisor;
\item $g = 7$, $X$ is section of a connected component of the orthogonal Lagrangian Grassmannian $\mathbb O\mathbb G_{+}(5,10) \subset \mP^{15}$ by a linear subspace of codimension $7$;
\item $g = 8$, $X$ is a section of $\mathbb G (2,6)\subset\mP^{14}$ by a linear subspace of codimension $5$;
\item $g = 9$, $X$ is a section of the symplectic Lagrangian Grassmannian $\mathbb L\mathbb G(3, 6)\subset\mP^{13}$ by a linear subspace of codimension $3$;
\item $g = 10$, $X$ is a section of the homogeneous space $G_2/P\subset\mP^{13}$ by a linear subspace of codimension 2;
\item g = 12, $X$ is the zero locus of three sections of the rank-$3$ vector bundle $\bigwedge^2\sU^\vee$ where $\sU$ is the universal subbundle on $\mathbb G(3, 7)$.
\end{enumerate}
As often remarked by several authors, the two cases occurring for $\rho(X)=\iota(X)=1$ and $g=6$ belong to the same deformation family and the same occurs for the case $\rho(X)=\iota(X)=1$ and $g=3$.

\subsection{Restrictions}
By \cite[Lemma 2.1.1]{KPS}, see also \cite[Remark 2.1.2]{KPS} in order to avoid pathologies in the shape of the Hilbert schemes, in this work we will restrict our considerations mainly to those deformation families where $2H$ is very ample in the case of lines and to the case where $H$ is very ample in the case of conics. In other words, for the Hilbert schemes of lines we will not consider the case where $\rho(X)=1$, $\iota(X)=2$ and $X$ is a hypersurface in $\mP(1, 1, 1, 2, 3)$ of degree $6$ and $\iota(X)=2$) nor the case where $\rho(X)=\iota(X)=1$ and $X$ is a double cover of $\mP^3$ branched over a smooth sextic surface. 

For the Hilbert scheme of conics, we will exclude both the cases where $\rho(X)=1$ and $\iota(X)=2$ and $X$ is a hypersurface in $\mP(1, 1, 1, 2, 3)$ of degree $6$ or $X$ is the double cover of $\mP^3$ branched in a quartic surface than the cases where $\rho(X)=\iota(X)=1$ and $X$ is a double cover of $\mP^3$ branched over a smooth sextic surface or $X$ is a double cover of a smooth quadric in $\mP^4$ branched in an intersection with a quartic. 

\subsection{Hilbert Schemes of Lines where $\rho(X)=1$ and $\iota(X)=2$}
We recall that if $\iota(X)=4$ then $X=\mathbb P^3$ and the Hilbert scheme of lines is the Grasmannian ${\mathbb G}(2,4)$ which is realised as the Klein quadric inside $\mathbb P^5$, while if $\iota(X)=3$ then $X=Q_3$ is a smooth quadric in $\mathbb P^4$ and the Hilbert scheme of lines is $\mathbb P^3$; see c.f. \cite[Section 6]{Tan}.

We also recall that if $\rho(X)=1$, $\iota(X)=2$ and $d=1$, the first multiple of $H$ to be very ample is $3H$: this gives rise to an extra component in moduli. Hilbert scheme ${\rm{Hilb}}(X, t + 1)$ has two irreducible components M1 and M2, parametrizing smooth lines and genus 1 curves union a point, respectively, see: \cite[Theorem 3.7]{PR}. We do not study this case.

\medskip

For the remaining cases, $d(X)=2,3,4,5$, it holds that $2H$ is very ample. We can split our description in two subcases: $d(X)=2$, that is the case where $X$ is a quartic double solid, and the case where $d(X)\geq 3$.

\begin{prop}\label{bitangents} If $X$ is a quartic double solid branched over a quartic surface which contains no lines then $\Sigma(X)$ is a smooth surface of general type with the following invariants: $q(\Sigma(X)) = 10$, $p_g(\Sigma(X)) = 101$, $h^1(\Sigma(X),\Omega^1_{\Sigma (X) }) = 220$, $c_2(\Sigma(X) ) = 384$. 
\end{prop}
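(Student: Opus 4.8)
The plan is to study the Hilbert scheme of lines $\Sigma(X)$ on a quartic double solid $\pi\colon X\to\mathbb P^3$ branched over a smooth quartic surface $B\subset\mathbb P^3$ containing no lines. The key observation, going back to the classical theory, is that a line $\ell\subset X$ (with respect to the ample class $H$, so that $\pi(\ell)$ is a line in $\mathbb P^3$ on which $\pi$ restricts to a double cover) maps to a line $L=\pi(\ell)$ in $\mathbb P^3$ which is everywhere tangent to $B$, i.e. a \emph{bitangent line} of the quartic surface (the double cover of $L\cong\mathbb P^1$ branched along $L\cap B$ must be reducible, hence $L\cap B$ is a divisor of degree $4$ which is $2$ times a degree-$2$ divisor, so $L$ meets $B$ in two points each with multiplicity $2$, or one point with multiplicity $4$). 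Conversely, each such bitangent line lifts to two lines in $X$, exchanged by the involution. Thus $\Sigma(X)$ is an \'etale double cover (away from the locus of hyperflexes, which the no-lines hypothesis keeps under control) of the surface $F(B)\subset\mathbb G(2,4)$ of bitangents of $B$; its smoothness and its numerical invariants can be extracted from the classical description of $F(B)$ (Welters, Tikhomirov).

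First I would set up the incidence correspondence and use the standard deformation-theoretic identification $T_{[\ell]}\Sigma(X)=H^0(\ell,N_{\ell/X})$, showing $N_{\ell/X}\cong\mathcal O_{\mathbb P^1}\oplus\mathcal O_{\mathbb P^1}(-1)$ or $\mathcal O_{\mathbb P^1}(1)\oplus\mathcal O_{\mathbb P^1}(-2)$ for a line on a Fano threefold of index $2$; the no-lines-on-$B$ hypothesis should force the generic (balanced, up to the $(-1)$) splitting everywhere and hence smoothness of $\Sigma(X)$ with $\dim\Sigma(X)=h^0(N_{\ell/X})=2$. Next I would compute $q(\Sigma(X))$ via the Albanese: the Abel–Jacobi / Prym construction attaches to $X$ its intermediate Jacobian $J(X)$, which for the quartic double solid is a principally polarized abelian variety of dimension $10$, and $\Sigma(X)$ maps to $J(X)$ generating it, giving $q=10$. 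Then $p_g$, $h^{1,1}$ and $c_2=e(\Sigma(X))$ would follow from computing the relevant Chern numbers: $c_2$ by the double-cover formula applied to $\Sigma(X)\to F(B)$ together with the known Euler number of the bitangent surface, $\chi(\mathcal O_{\Sigma(X)})$ from Noether's formula once $c_1^2=K_{\Sigma(X)}^2$ is pinned down, and $p_g=\chi+q-1$, $h^{1,1}$ from $e=2-2b_1+b_2$ with $b_2=2p_g+h^{1,1}$.

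The main obstacle I expect is the bookkeeping of the canonical class $K_{\Sigma(X)}$ and hence $c_1^2$: this requires understanding the ramification of $\Sigma(X)\to F(B)$ precisely (the hyperflex lines of $B$, where the two preimage lines in $X$ come together), and controlling the geometry of $F(B)$ itself — its singularities, its class in $\mathbb G(2,4)$, and the restriction of the tautological bundles. Equivalently one can bypass some of this by invoking the known identification of $\Sigma(X)$ with a component of the fixed locus of an involution on a moduli space, or by a direct cohomological computation on the universal line over $\Sigma(X)$. I would lean on the classical/modern references (Welters, Tikhomirov, Clemens) for the numerology of the bitangent surface and then just verify the double-cover corrections; the hypothesis that $B$ contains no lines is exactly what guarantees $\Sigma(X)$ is smooth (no line of $X$ maps to a line inside $B$, which would create a non-reduced or singular locus) and that the surface of bitangents is irreducible of the expected general type. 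Once smoothness and irreducibility are in hand, the four invariants are determined by Riemann–Roch/Noether plus the intermediate Jacobian computation, and the proof reduces to a finite check of these numerical identities.
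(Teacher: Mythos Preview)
Your outline is essentially the argument carried out in the references the paper invokes: the paper gives no self-contained proof but simply cites Welters' cohomological study \cite{W} and Tikhomirov \cite{Tih2} (and the authors' own \cite{CoZucc-primo}). So in substance you are reconstructing exactly what the paper defers to --- the identification of $\Sigma(X)$ with the double cover of the surface of bitangents of the branch quartic $B$, smoothness via normal-bundle deformation theory under the no-lines hypothesis, $q=10$ from the Abel--Jacobi map to the $10$-dimensional intermediate Jacobian, and the remaining Hodge numbers from Noether/Riemann--Roch once the Chern numbers of the bitangent surface are known.

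One small correction worth flagging: for a line $\ell$ on a Fano threefold of index $2$ one has $-K_X\cdot\ell=2$, hence $\deg N_{\ell/X}=0$, so the possible splitting types are $\sO\oplus\sO$ and $\sO(1)\oplus\sO(-1)$, not $\sO\oplus\sO(-1)$ and $\sO(1)\oplus\sO(-2)$ (those are the index-$1$ types). This does not affect your strategy --- both balanced types give $h^0(N_{\ell/X})=2$ and $h^1=0$, and the no-lines-on-$B$ hypothesis is precisely what rules out worse splittings and yields smoothness --- but the bookkeeping for $K_{\Sigma(X)}$ and $c_1^2$ downstream depends on getting these degrees right.
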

\begin{proof}
See \cite[Cohomological study pp. 41-45]{W}. See also \cite{Tih2}, and \cite{CoZucc-primo}.
\end{proof}

%\begin{prop}\label{formulae} Let $X$ be a cubic threefold and $\Sigma(X)$ its Fano surface of lines. If $X$ is smooth then $S$ is smooth. The following identities hold:
%$$
%c_{2}(\Sigma(X))=27, K_{\sigma(X)}^2(\Sigma(X))=45, p_g(\Sigma(X))=10 q(\Sigma(X))=5
%$$
%\end{prop}
%\begin{proof} See \cite[Theorem 7.8]{CG}, \cite[Formula 9.14]{CG}, \cite[Formula 10.11]{CG}, \cite[Formula 10.12]{CG}. \end{proof}
If $d\geq 3$ we have:

\begin{prop}\label{gradomaggioretre}
Let $X$ be a smooth Fano threefold with $\rho(X)=1$, $\iota(X)=2$ and $d(X) \geq 3$. Then the Fano scheme of lines $\sigma(X)$ is a smooth irreducible surface. In particular,
\begin{enumerate}
\item if $d = 3$ then $\Sigma(X)$ is a minimal surface of general type with  $q(\Sigma (X))=5$, $p_g(\Sigma(X))=10$, and $K_{\Sigma (X)}^2 =45$;
\item if $d = 4$ then $\Sigma(X)$ is an abelian surface;
\item if $d=5$ then $\Sigma(X)=\mP^2$.
\end{enumerate}
\end{prop}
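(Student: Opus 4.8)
The plan is to establish a uniform smoothness statement first and then identify the surface in each of the three cases. For any line $\ell\subset X$ one has $H\cdot\ell=1$, so by adjunction on $\ell\cong\mP^1$ the normal bundle satisfies $\deg N_{\ell/X}=-K_X\cdot\ell-2=2H\cdot\ell-2=0$; hence $N_{\ell/X}\cong\sO_\ell(a)\oplus\sO_\ell(-a)$ for some $a\ge 0$. The first step is the key local lemma that, when $d(X)\ge 3$, one always has $a\le 1$: since $\rho(X)=1$ the threefold contains no plane, and a standard argument (cf.\ Shokurov, Reid) rules out $N_{\ell/X}\cong\sO_\ell(2)\oplus\sO_\ell(-2)$ and worse — this is precisely the point that fails for $d=1,2$, where the branch locus can contain lines. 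Granting $a\in\{0,1\}$ we obtain $h^0(\ell,N_{\ell/X})=2$ and $h^1(\ell,N_{\ell/X})=0$, so $\Hilb(X)$ is smooth of dimension $2$ at every point of $\Sigma(X)$. Irreducibility then follows either from the fact that through a general point of $X$ pass only finitely many lines together with a dimension count on the universal line $\mathcal{L}\to\Sigma(X)$, or, for $d=3,4$ where $X$ is a complete intersection, from a Lefschetz-type connectedness argument. This already yields that $\Sigma(X)$ is a smooth irreducible surface.

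For $d=5$ the idea is to exploit that $B_5$ is unique and carries an $\mathrm{SL}_2$-action. Realizing $B_5$ via Mukai's description inside $\mG(2,5)$, the lines on $B_5$ are parametrized by the projectivization of the irreducible $3$-dimensional $\mathrm{SL}_2$-representation, giving $\Sigma(B_5)\cong\mP^2$; alternatively, one checks that the smooth connected surface $\Sigma(B_5)$ is rational with Picard number $1$ whose generator squares to $1$, which forces $\mP^2$ by the classification of minimal surfaces.

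For $d=4$, write $X=Q_1\cap Q_2\subset\mP^5$ and consider the pencil $\{\lambda Q_1+\mu Q_2\}_{[\lambda:\mu]\in\mP^1}$. For a smooth $X$ the discriminant consists of $6$ distinct points, and the double cover $C\to\mP^1$ branched there is a smooth curve of genus $2$. The classical theory of maximal isotropic subspaces of a pencil of quadrics (Reid, Donagi, Narasimhan–Ramanan) identifies $\Sigma(X)$ with a principal homogeneous space under $\mathrm{Jac}(C)$; since $g(C)=2$, $\Sigma(X)$ is an abelian surface.

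For $d=3$, $X\subset\mP^4$ is a cubic threefold and $F:=\Sigma(X)$ is its Fano surface of lines. One introduces the incidence variety $I\subset F\times X$, which is a $\mP^1$-bundle over $F$ and maps to $X$ with finite general fibres; from the tangent sequence of $\ell\subset X\subset\mP^4$ one computes the Chern classes of $F$, and via the Gauss embedding $F\hookrightarrow\mG(2,5)$ together with Schubert calculus on $\mG(2,5)$ one finds that $K_F$ is the restriction of the Plücker polarization, so $K_F$ is ample (hence $F$ minimal) and $K_F^2=45$. For the Hodge numbers, the Abel–Jacobi map identifies $\Alb(F)$ with the intermediate Jacobian $J(X)$, which has dimension $5$ because $h^{2,1}(X)=5$; therefore $q(F)=5$, and then $\chi(\sO_F)=\tfrac1{12}\bigl(K_F^2+c_2(F)\bigr)=6$ forces $p_g(F)=10$. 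I expect this last case to be the main obstacle: unlike the essentially tautological identifications for $d=4$ and $d=5$, pinning down $p_g$, $q$ and $K_F^2$ for the cubic threefold requires a genuine analysis of the universal family and of $H^3(X)$, and is where one leans on the work of Clemens–Griffiths, Tyurin, Altman–Kleiman and Beauville.
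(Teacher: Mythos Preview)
The paper does not prove this proposition at all: its entire argument is the single line ``See c.f.\ \cite[Thm.~1.1.1]{KPS}.'' What you have written is essentially a condensed version of how that result is actually established in the literature (Clemens--Griffiths, Tyurin, Reid, and then the synthesis in Kuznetsov--Prokhorov--Shramov), so in that sense your proposal goes well beyond the paper and is correct in outline.

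Two small remarks on execution. First, your bound $a\le 1$ can be obtained more cheaply than via the ``no plane plus Shokurov--Reid'' route: for $d\ge 3$ the divisor $H$ is very ample, so $X\subset\mP^N$ with $\ell$ an honest line, and the inclusion $N_{\ell/X}\hookrightarrow N_{\ell/\mP^N}\cong\sO_\ell(1)^{\oplus(N-1)}$ immediately forces every summand of $N_{\ell/X}$ to have degree $\le 1$. This is exactly the mechanism that breaks for $d=1,2$, where $H$ is not very ample. Second, in the $d=3$ case your Noether computation $\chi(\sO_F)=\tfrac1{12}(K_F^2+c_2(F))=6$ silently uses $c_2(F)=27$; you should make that explicit (it drops out of the same tangent-bundle calculation, or from the interpretation of $c_2(F)$ as the number of lines on a general hyperplane section, a cubic surface).
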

\begin{proof}
See c.f. \cite[Thm. 1.1.1]{KPS}.
\end{proof}

\subsection{Hilbert Schemes of Lines where $\rho(X)=1$ and $\iota(X)=1$}
If $g\geq 4$ or $g=3$ and $X$ is a quartic threefold then the anticanonical divisor is very ample, and the linear system $\mid -K_X\mid$ defines an embedding of X onto a projectively normal threefold  of degree $2g-2$ see c.f. \cite{CZ}. By \cite[Lemma 2.2.3]{KPS} we know that every irreducible component of $\Sigma(X)$ is one dimensional and that if $X$ is general in its deformation class then $\Sigma(X)$ is a smooth curve of positive genus see: \cite[Theorem 4.2.7]{IP}.

%; {\bf{Vedere casi non generali in cui si $\Sigma(X)$ si spezza e vi compaiono curve ellittiche o razionali??}}
%
If $g=2$ or $g = 3$ and $\mid K_X\mid$ defines a double cover over a smooth quadric threefold $Q\subset\mP^4$ ramified along a smooth surface $S \subset Q$ of degree $8$ then $2H$ is not very ample and we do not study here these cases.
\medskip

The following remark is crucial.
\begin{rem}\label{moltoimportante} In many cases $\Sigma(X)$ can have more than an irreducible component and in some cases everywhere nonreduced components occur. On the other hand by the very construction of Fano $3$-folds and of their Hilbert schemes it holds that the description of $\Sigma(X)$, and as well of $S(X)$, obtained for the general $X$ in the corresponding deformation family works perfectly for a dense set in this family given by Fanos defined over $\mathbb Q$ or on a number field $\mathbb K$. Hence from now on, when we will consider Fanos defined over $\mathbb K$ we will always refer to those ones for which the associated Hilbert schemes behave as in the general case of the corresponding deformation family. We refer to these Fanos over $\mathbb K$ as {\it{$\mathbb K$-standard Fanos}}. 
\end{rem}

\subsection{Basic arithmetic on the Hilbert Schemes of Lines where $\rho(X)=\iota(X)=1$ or $\rho(X)=1$, $\iota(X)=2$}
If $X$ is a $\mathbb K$-standard Fano threefold, a $\mathbb K$-rational line is simply a $\mathbb K$-rational point of $\Sigma(X)$. The following Proposition is our first result about the arithmetic of Fanos.

\begin{prop}\label{aritmetica1} Let $X$ be a smooth $\mathbb K$-standard Fano threefold.
Assume that $\rho(X)=1=\iota(X)=1$ and $g\geq 4$ or $g=3$ and $X$ is a quartic threefold or $\rho(X)=1$, $\iota(X)=2$ and $2\leq d(X)\leq 3$. Then the set of $\mathbb K$-rational lines in the Fano scheme of lines $\Sigma(X)$ is degenerate. If $\rho(X)=1$, $\iota(X)=2$ and $d=4$ then it is dense.
If $\rho(X)=1$, $\iota(X)=2$ and $d=5$ then it is dense.
\end{prop}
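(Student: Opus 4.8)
The plan is to run a case analysis driven by the geometry of $\Sigma(X)$, which is completely described by Propositions \ref{bitangents} and \ref{gradomaggioretre} together with the structure results for $\iota(X)=1$ recalled above, and then to feed each case into the appropriate diophantine theorem: Faltings' theorem (the Mordell conjecture) when $\Sigma(X)$ is a curve of genus $\geq 2$; Faltings' theorem on rational points of subvarieties of abelian varieties (the Mordell--Lang statement in that case) when $\Sigma(X)$ is a surface of general type admitting a finite map to an abelian variety; and, on the ``dense'' side, the potential density of rational points on abelian varieties together with the trivial density of rational points on $\mP^2$.

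For the degenerate cases I would argue as follows. If $\rho(X)=\iota(X)=1$ with $g\geq 4$, or $g=3$ with $X$ a quartic threefold, then for $\mathbb K$-standard $X$ the scheme $\Sigma(X)$ is a smooth irreducible curve, and in each of these deformation families its genus is $\geq 2$ (this can be extracted from the explicit descriptions in \cite[\S 4.2]{IP} and \cite{KPS}). Faltings' theorem then gives that $\Sigma(X)(\mathbb K)$ is finite, hence not Zariski-dense; since $\dim\Sigma(X)=1$ this is exactly the degeneracy of the set of $\mathbb K$-rational lines. If $\rho(X)=1$, $\iota(X)=2$ and $d(X)\in\{2,3\}$, then by Proposition \ref{bitangents} (case $d=2$) and Proposition \ref{gradomaggioretre}(1) (case $d=3$) the surface $\Sigma(X)$ is smooth, minimal, of general type, with irregularity $q(\Sigma(X))=10$, respectively $5$. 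In particular its Albanese morphism $\alpha\colon\Sigma(X)\to A:=\Alb(\Sigma(X))$ is non-constant and does not contract $\Sigma(X)$ to a curve, i.e.\ it is generically finite onto its (two-dimensional) image $Z:=\alpha(\Sigma(X))\subset A$: for $d=3$ this holds because $\alpha$ is a closed embedding (the classical geometry of the Fano surface of a cubic threefold), and for $d=2$ it follows from the cohomological study of $\Sigma(X)$ in \cite{W}, \cite{Tih2}, \cite{CoZucc-primo}. Now $Z$ is an irreducible surface birational to $\Sigma(X)$, hence of general type; in particular $Z$ is not a translate of an abelian subvariety of $A$ and contains no two-dimensional abelian subvariety. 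By Faltings' theorem on rational points of subvarieties of abelian varieties, $Z(\mathbb K)$ is contained in a finite union $\bigcup_i (c_i+B_i)$ with $c_i+B_i\subseteq Z$ and $B_i\subseteq A$ an abelian subvariety; by the preceding remark each $\dim(c_i+B_i)\leq 1$, so $Z(\mathbb K)$ lies in a proper closed subset of $Z$. Since $\alpha$ is dominant onto $Z$, the preimage of this proper closed subset is a proper closed subset of $\Sigma(X)$ containing $\Sigma(X)(\mathbb K)$. Hence the $\mathbb K$-rational lines are degenerate.

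For the dense cases: if $\rho(X)=1$, $\iota(X)=2$, $d=4$, then $\Sigma(X)$ is an abelian surface (Proposition \ref{gradomaggioretre}(2)); rational points on abelian varieties over number fields are potentially dense, so after a finite extension of $\mathbb K$ the $\mathbb K$-rational lines become Zariski-dense. If $d=5$, then $\Sigma(X)\cong\mP^2$ (Proposition \ref{gradomaggioretre}(3)) and $\mP^2(\mathbb K)$ is Zariski-dense for any number field, so the $\mathbb K$-rational lines are already dense over $\mathbb K$.

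The main obstacle is the surface-of-general-type case, $d=2$ and $d=3$: for an arbitrary surface of general type the Zariski non-density of rational points is only conjectural (Bombieri--Lang), so one cannot argue abstractly. What rescues the argument is the specific feature that these surfaces map finitely to abelian varieties --- indeed $\Sigma(X)$ embeds in its Albanese when $d=3$ --- which brings Faltings' Mordell--Lang theorem to bear. Consequently the delicate point is geometric rather than arithmetic: one must know that the Albanese map of $\Sigma(X)$ is generically finite (equivalently, that $\Sigma(X)$ is not fibred over a curve by its $1$-forms), which for the cubic threefold is classical and for the quartic double solid is precisely what the cited cohomological computations provide.
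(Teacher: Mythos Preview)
Your argument is correct and, for the index-one cases and for $d=4,5$, it coincides verbatim with the paper's proof (Faltings on curves of genus $\geq 2$; potential density on an abelian surface; $\mP^2$).

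The interesting divergence is in the surface cases $d=2,3$. The paper does not argue these: for $d=2$ it simply invokes the main theorem of \cite{CoZucc-secondo} as a black box, and the case $d=3$ is in fact silently omitted from the paper's proof. Your route---pushing $\Sigma(X)$ into its Albanese and applying Faltings' theorem on subvarieties of abelian varieties---treats both cases uniformly and is precisely the mechanism that drives \cite{CoZucc-secondo}. So your proof is more self-contained than the paper's, and it patches the missing $d=3$ case (where the Albanese map is even a closed embedding by Clemens--Griffiths/Tyurin).

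One small slip worth tightening: you write that the Albanese image $Z$ is \emph{birational} to $\Sigma(X)$ and hence of general type. For $d=3$ this is fine (the Albanese map is an embedding), but for $d=2$ you only asserted generic finiteness, and a surface of general type can certainly dominate an abelian surface by a finite map, so ``hence of general type'' is not justified as stated. The conclusion you need, however---that $Z$ is not a translate of an abelian subvariety---follows anyway: the Albanese image always generates $\Alb(\Sigma(X))$ as a group, so if $Z=c+B$ then $B=\Alb(\Sigma(X))$, contradicting $\dim B=2<q(\Sigma(X))$. Replacing your ``birational, hence general type'' sentence with this generation argument makes the proof watertight.
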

\begin{proof} If $\rho(X)=1=\iota(X)=1$ and $g\geq 4$ or $g=3$ and $X$ is a quartic threefold then $\Sigma(X)$ is a smooth curve of general type. Hence the claim follows by Faltings' theorem. If $\rho(X)=1$, $\iota(X)=2$ and $d=2$ then $X$ is a quartic double solid and the finiteness of the rational points on the corresponding surface $\Sigma_X$ is the main result of \cite{CoZucc-secondo}. 
 If $\rho(X)=1$, $\iota(X)=2$ and $d=4$ then $\Sigma(X)$ is an abelian variety, hence its rational points  are potentially dense (simply take any algebraic point generating a Zariski-dense subgroup). 
If $\rho(X)=1$, $\iota(X)=2$ and $d=5$ then $\Sigma(X)=\mathbb P^2$.
\end{proof}
\subsection{Hilbert Schemes of Conics}
We turn our attention to the Hilbert scheme of conics $S(X)$. We follow the standard  partition among the families of Fanos.

\subsection{Hilbert Schemes of Conics where $\rho(X)=1$, $\iota(X)=2$ and $H$ is very ample}
For the case $\rho(X)=1$, $\iota(X)=2$ and $d=2$, that is the case of quartic double solid $\pi\colon X\to\mathbb P^3$, where $H$ is not very ample, we know that $S(X)$ has a morphism over the dual of $\mathbb P^3$ whose general fiber is a disjoint union of $126$ rational curves: \cite[Lemma 2.5. and the full Section 2]{Ha}.  The arithmetic question of the denisty of rational points in this four-fold is left open.

For the remaining case it holds:

\begin{prop} Let $X$ be a smooth Fano threefold with $\rho(X)=1$, $\iota(X)=2$ and $H$ is very ample. Then for the Fano scheme $S(X)$ of conics it holds:
\begin{enumerate}
\item if $d=3$ then $S(X)$ is  $\mathbb P^2$-bundle over $\Sigma(X)$;
\item if $d=4$ then $S(X)$ is a $\mathbb P^3$-bundle over a curve of genus $2$;
\item if $d=5$ then $S(X)$ is isomorphic to $\mP^4$.
\end{enumerate}
\end{prop}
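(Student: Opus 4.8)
The plan is to establish each of the three cases by exhibiting the Fano scheme of conics $S(X)$ as an explicit projective bundle (or projective space) over the already-described Fano scheme of lines $\Sigma(X)$, using Proposition \ref{gradomaggioretre} as input for the base. The unifying mechanism is the following classical observation: on a Fano threefold of index $2$ with $H$ very ample, every conic $C\subset X$ spans a plane $\langle C\rangle\cong\mathbb P^2\subset\mathbb P^N$, and this plane meets $X$ (as a scheme) in a curve of degree $H^2\cdot\langle C\rangle$. First I would compute, using adjunction and the index condition $-K_X=2H$, the class and arithmetic genus of a plane section $X\cap\langle C\rangle$ in each case, so that $C$ appears as a residual component: a conic plus a residual curve of complementary degree inside the plane section. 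This reduces the study of $S(X)$ to understanding how the conic varies inside each plane section, and that residual geometry is what produces the fibers $\mathbb P^2$, $\mathbb P^3$, $\mathbb P^4$.

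For $d=3$, where $X\subset\mathbb P^4$ is a cubic threefold, a plane $\langle C\rangle$ cuts $X$ in a plane cubic curve, so $C$ plus a residual line $\ell$. Thus every conic determines a line $\ell\in\Sigma(X)$, giving a morphism $S(X)\to\Sigma(X)$; conversely, fixing $\ell$, the conics residual to $\ell$ in plane cubics through $\ell$ are parametrized by the planes containing $\ell$ together with the choice of the cubic's factorization, and a dimension count (together with the fact that the relevant system is a net) shows the fiber is $\mathbb P^2$. I would check flatness/local triviality by a cohomology-and-base-change argument over the smooth surface $\Sigma(X)$. For $d=4$, $X=Q_1\cap Q_2\subset\mathbb P^5$, a plane section has degree $4$ and arithmetic genus $1$, so a conic $C$ is residual to another conic $C'$; the pencil of quadrics through $X$ restricted to $\langle C\rangle$ shows the pair $\{C,C'\}$ sits in a $2$-dimensional family of planes, while the base should come out to be the genus-$2$ curve already appearing (the discriminant curve / Hilbert scheme of lines fibered structure from Proposition \ref{gradomaggioretre}(2), whose Albanese or associated curve is the genus-$2$ curve parametrizing the rulings), and the fiber is $\mathbb P^3$. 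For $d=5$, $X=B_5$, every plane section of the del Pezzo threefold of degree $5$ is a quintic elliptic normal curve degenerating appropriately, and one knows classically (Iskovskikh, Furushima–Nakayama) that $S(B_5)\cong\mathbb P^4$; I would simply cite this, or reconstruct it from the fact that $B_5$ is the variety of sums of powers and conics correspond to a $4$-dimensional linear system.

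The main obstacle I expect is case (2), $d=4$: identifying the base of the $\mathbb P^3$-bundle precisely as \emph{the} genus-$2$ curve (rather than some isogenous or unrelated curve) requires matching the moduli. The cleanest route is to recall that for the complete intersection of two quadrics the Hilbert scheme of lines $\Sigma(X)$ is the Jacobian of a genus-$2$ curve $\Gamma$ (Reid, Narasimhan–Ramanan), that $\Gamma$ itself is the Hilbert scheme of $(d-2)$-planes — here the pencil of quadrics with its six degenerate members — and then to show directly that a conic in $X$ is cut out by a hyperplane inside one of the two rulings of one of the quadrics $Q_\lambda$ in the pencil, so the assignment $C\mapsto\lambda$ lands on $\Gamma$. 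Once the base is identified, local triviality of the $\mathbb P^3$ is a standard semicontinuity argument. I would also need to be slightly careful everywhere with reducible or non-reduced conics (the scheme-theoretic boundary of $S(X)$), but since Proposition \ref{gradomaggioretre} already guarantees $\Sigma(X)$ is smooth and the plane-section computation is uniform, these degenerations fit into the bundle structure without extra components; this is precisely the content that references \cite{IP}, \cite{KPS} supply, which I would invoke rather than reprove.
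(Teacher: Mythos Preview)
The paper's proof is simply two literature citations: \cite[Proposition 3.3]{HRS} for $d=3$ and \cite[Proposition 2.3.8]{KPS} for the general statement. Your proposal instead reconstructs the underlying geometry, which is a more informative (and genuinely different) route; for $d=3$ your residual-line mechanism is exactly the content of the HRS argument, and for $d=5$ you end up, like the paper, citing the classical description of $B_5$.

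For $d=4$, however, your opening move contains a real error. A generic plane in $\mathbb P^5$ meets the threefold $X=Q_1\cap Q_2$ in four \emph{points}, not in a degree-$4$ curve of arithmetic genus $1$; there is no ``residual conic $C'$'' inside $\langle C\rangle$, and the uniform residual framework you announce at the start breaks here. The correct picture---which you do reach in your obstacle paragraph---is that the plane $\langle C\rangle$ must lie entirely in a unique quadric $Q_\lambda$ of the pencil; the choice of $\lambda$ together with a ruling of planes on $Q_\lambda$ is a point of the genus-$2$ hyperelliptic curve $\Gamma$ (the double cover of the pencil branched over the six singular members), and the planes in a fixed ruling form the $\mathbb P^3$ fibre, each cutting $X$ in a conic. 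So your plan for $d=4$ is salvageable, but you should discard the ``degree-$4$ plane section'' paragraph and lead directly with the pencil argument. Also note a small slip of language: the conic is cut from the plane by the \emph{other} quadric, not by ``a hyperplane inside one of the rulings''.
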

\begin{proof} For the case $d=3$ see \cite[Proposition 3.3]{HRS}. See c.f.\cite[Proposition 2.3.8]{KPS}.
\end{proof}

\subsection{Hilbert Schemes of Conics where $\rho(X)=\iota(X)=1$}
As we were reviewing the literature on $S(X)$ it seemed to us that the behaviour of the Abel-Jacobi map from $S(X)$ to the intermediate Jacobian $J(X)$ of $X$ is still not fully described. We do not need to clarify this point for our aims in this work. Nevertheless we consider it appropriate to point out the reader that for $\rho(X)=\iota(X)=1$ it makes sense to consider four subcases: $3\leq g\leq 5$, $g=6$, $g=9$, $g\geq 7$ and $g\neq 9$, as far as the Abel-Jacobi map it concerns.

%By \cite{Fur} {\bf{Vedere se pubblicato!!!}} we know that in the case of a smooth quartic threefold, hence it cannot contains a $2$-plane then any irreducible component of $S(X)$ has dimension $2$.

 \subsubsection{The case g=3} 
% By \cite{Fur} {\bf{Vedere se pubblicato!!!}} we know that in the case of a smooth quartic threefold, hence it cannot contains a $2$-plane, then any irreducible component of $S(X)$ has dimension $2$. 
 The description and the geometry of $S(X)$ is given in \cite{CMW}.
 \begin{prop}\label{genere 3}
 If $X\subset \mP^4$ is a smooth quartic threefold then $S(X)$ is a surface of general type
 with the following invariants:
 $$
 K_{S(X)}^2= 341040, c_2(S(X)=172704, p_g(S_X)=42841, q(S_X)=30.
 $$
\end{prop}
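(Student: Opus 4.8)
\textbf{Proof proposal for Proposition \ref{genere 3}.}

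The plan is to identify $S(X)$ explicitly as an incidence variety and then compute its numerical invariants via standard characteristic-class machinery. First I would recall the classical description: for a smooth quartic threefold $X \subset \mathbb P^4$, a conic on $X$ spans a plane $\Lambda \cong \mathbb P^2 \subset \mathbb P^4$, and conversely each plane $\Lambda$ meets $X$ in a plane quartic curve $X \cap \Lambda$; the conics contained in $X$ and lying in $\Lambda$ are exactly the conics that split off from this plane quartic, i.e. the pencils of conics $C$ with $X\cap\Lambda = C + C'$ for a residual conic $C'$. Thus $S(X)$ admits a natural (generically finite, in fact of large degree) map to the Grassmannian $\mathbb G(3,5)$ of planes in $\mathbb P^4$, whose fiber over $[\Lambda]$ records the ways of writing the plane quartic $X\cap\Lambda$ as a sum of two conics. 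The fibration structure and the description in \cite{CMW} should give $S(X)$ as a smooth projective surface, and the substantive content is to realize it as (a desingularization of, or directly as) the zero locus of an appropriate section of a vector bundle on an explicit ambient smooth variety, e.g. on a projective bundle over $\mathbb G(3,5)$ parametrizing pairs (plane, conic in that plane).

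Next I would carry out the Chern class computation. Concretely: over $\mathbb G(3,5)$ one has the tautological bundles giving rise to the relative $\mathbb P^5$-bundle $\pi\colon \mathbb P(\operatorname{Sym}^2 \mathcal S^\vee) \to \mathbb G(3,5)$ of conics in the varying plane (here $\mathcal S$ is the rank-$3$ tautological subbundle, so a conic in $\Lambda=\mathbb P(\mathcal S)$ is a point of $\mathbb P(\operatorname{Sym}^2\mathcal S^\vee)$). The condition that the chosen conic be contained in $X$ cuts out, inside this $\mathbb P^5$-bundle, the zero locus of a section of a rank-$5$ bundle: restricting the quartic equation of $X$ to the plane and then dividing by the conic's equation produces a residual conic, i.e. a section of $\operatorname{Sym}^2\mathcal S^\vee$ twisted by the relative $\mathcal O(1)$ — vanishing of the appropriate piece (or of the whole $\operatorname{Sym}^2\mathcal S^\vee(1)$ after accounting for the tautological section already used) is the incidence condition. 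Once the correct rank-$5$ bundle $\mathcal E$ is pinned down, $S(X)$ is its zero scheme inside the $7$-dimensional ambient $\mathbb P(\operatorname{Sym}^2\mathcal S^\vee)$, so $K_{S(X)} = \big(K_{\mathbb P(\operatorname{Sym}^2\mathcal S^\vee)} + c_1(\mathcal E)\big)\big|_{S(X)}$ by adjunction, and $[S(X)] = c_5(\mathcal E)$ as a cycle. Then $K_{S(X)}^2$, $c_2(S(X))$ are obtained by pushing forward $c_1(K_{S(X)})^2 \cdot c_5(\mathcal E)$ and $c_2(T_{S(X)})\cdot c_5(\mathcal E)$ to a point, using the Chern classes of $\mathcal S$ on $\mathbb G(3,5)$ (Schubert calculus) and the projective-bundle relations; the holomorphic invariants follow from Noether's formula $\chi(\mathcal O_{S(X)}) = \tfrac{1}{12}(K^2 + c_2)$ together with $q(S(X)) = 30$ — the irregularity — which I would get either from the Lefschetz-type analysis relating $S(X)$ to the intermediate Jacobian $J(X)$ (whose dimension is $30$ for the quartic threefold), or directly from the cited references \cite{CMW}, \cite{IP}.

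The main obstacle I anticipate is \emph{not} the bookkeeping of Schubert and projective-bundle Chern classes — that is lengthy but routine — but rather justifying that the naive incidence construction actually produces a \emph{smooth} surface isomorphic to $S(X)$ for general (or $\mathbb K$-standard) $X$, with the expected dimension, and that no excess-intersection corrections are needed. In particular one must rule out extra components (lines-plus-points, or degenerate "conics" that are double lines), check that the residual-conic map is well-defined along the zero locus, and confirm transversality of the defining section so that $c_5(\mathcal E)$ really computes the fundamental class rather than a cycle supported on a larger locus. Here I would lean on the detailed geometric analysis of \cite{CMW} for the quartic threefold, and on \cite[Lemma 2.2.x]{KPS}-type general position statements, invoking the $\mathbb K$-standard hypothesis (Remark \ref{moltoimportante}) to guarantee that the general-member description is valid. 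Granting that, the stated numbers $K^2_{S(X)} = 341040$, $c_2(S(X)) = 172704$, $p_g = 42841$, $q = 30$ drop out of the computation and Noether's formula as a consistency check ($\chi(\mathcal O) = (341040 + 172704)/12 = 42812 = p_g - q + 1 = 42841 - 30 + 1$).
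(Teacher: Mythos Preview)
The paper itself gives no proof of this proposition; it is quoted from the literature, with the sentence preceding it pointing to Collino--Murre--Welters \cite{CMW}. Your outline follows the same strategy that \cite{CMW} uses, namely realizing $S(X)$ as the zero locus of a section of a globally generated bundle on the Hilbert scheme of conics in $\mathbb{P}^4$ and then running adjunction plus Schubert calculus. So in spirit you are aligned with the source the paper invokes.

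However, there are concrete numerical slips in your setup that would derail the computation if carried out as written. The ambient Hilbert scheme of conics in $\mathbb{P}^4$ is $\pi\colon \mathbb{P}(\mathrm{Sym}^2\mathcal{S}^\vee)\to \mathbb{G}(3,5)$ with $\dim \mathbb{G}(3,5)=6$ and fibres $\mathbb{P}^5$, so it has dimension $11$, not $7$. Correspondingly, the incidence condition is the vanishing of the image of the quartic equation in the cokernel of the multiplication map
\[
\mathcal{O}(-1)\otimes \pi^*\mathrm{Sym}^2\mathcal{S}^\vee \hookrightarrow \pi^*\mathrm{Sym}^4\mathcal{S}^\vee,
\]
and that cokernel has rank $15-6=9$, not $5$. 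Your phrasing ``dividing by the conic's equation produces a residual conic, i.e.\ a section of $\mathrm{Sym}^2\mathcal{S}^\vee(1)$'' conflates the \emph{existence} of a residual conic with the vanishing of a section; the quotient is only defined on the locus you are trying to cut out, which is why one must work with the rank-$9$ cokernel bundle instead. With the correct bundle of rank $9$ on the $11$-fold, the adjunction and Chern-class push-forward go through, $q(S(X))=h^{1,2}(X)=30$ comes from the Abel--Jacobi analysis as you say, and your Noether-formula check $(341040+172704)/12=42812=p_g-q+1$ then confirms the numbers.
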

% \begin{proof} See \cite{CMW}{\bf{PIETRO: Qui se riesco ad avere l'articolo dovrei capire le condizioni di generalit\`a}}.
%
% \end{proof}
 
 \subsubsection{The case g=4}  
  A Fano threefold of genus four is the complete intersection $X = Q \cap W$ of a quadric $Q$ and a cubic hypersurface $W$ in $\mP^5$. Let $T$ be the rank three tautological vector bundle on $\mathbb G(3,6)$. The Hilbert scheme $\sH_2$ of conics in $\mP^5$ is the total space of the projective bundle $\mP({\rm{Sym}}^2T^\vee)\to \mathbb G(3,6)$. On $\sH_2$ we have the tautological sequence
  $$
  0\to   \sO_{\sH_2}\to \pi^\star{\rm{Sym}}^2T^\vee\to\sQ\to 0
  $$
  and the equation $f_Q$, of $Q$ defines a section  $\sigma_Q$ of $\sQ$ whose zero-locus is the set of conics contained in $Q$.
  By the exact sequence 
  $$
  0\to\sO_{\sH_2}(-1)\otimes \pi^\star T^\vee \to\pi^\star{\rm{Sym}}^3T^\vee\to\sM\to 0
  $$
  it remains defined the vector bundle $\sM$ and the equation $f_W$ gives a section $\sigma_W$ of $\sM$ whose zero locus is the set of conics contained in $W$. Since $\sQ\oplus\sM$ is globally generated and $\sigma_Q\oplus\sigma_W$ is a sufficiently general section, by Bertini's theorem $\Sigma(X)$ is a smooth surface. More precisely:  
 \begin{prop}\label{genere 4}
 If $X$ is a general complete intersection $X = Q \cap W$ of a smooth quadric $Q$ and a smooth cubic hypersurface $W$ in $\mP^5$ then $S(X)$ is a surface of general type with the following invariants:
 $$
 K_{S(X)}^2= 23355, c_2(S(X))=11961, p_g(S(X))=2942, q(S(X))=20.
 $$
 \end{prop}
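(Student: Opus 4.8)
The surface $S(X)$ is, by the set-up above, the zero scheme on the smooth $14$-dimensional variety $\sH_2=\mP(\mathrm{Sym}^2T^\vee)$ of the section $\sigma_Q\oplus\sigma_W$ of $E:=\sQ\oplus\sM$; here $\dim\mathbb G(3,6)=9$ and $\mathrm{rank}\,\mathrm{Sym}^2T^\vee=6$, so $\dim\sH_2=14$, while $\mathrm{rank}\,\sQ=5$ and, since $\mathrm{rank}\,\mathrm{Sym}^3T^\vee=10$ and $\mathrm{rank}\bigl(\sO_{\sH_2}(-1)\otimes\pi^\star T^\vee\bigr)=3$, $\mathrm{rank}\,\sM=7$, whence $E$ has rank $12$ and $S(X)$ has pure dimension $2$, smoothness being the Bertini statement already recalled. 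The plan is to compute all invariants by intersection theory on $\sH_2$, the basic inputs being the fundamental class $[S(X)]=c_{12}(E)\in A^{12}(\sH_2)$, adjunction $K_{S(X)}=\bigl(K_{\sH_2}+c_1(E)\bigr)|_{S(X)}$, and the normal bundle sequence $0\to T_{S(X)}\to T_{\sH_2}|_{S(X)}\to E|_{S(X)}\to0$, which yields $c(T_{S(X)})=\bigl(c(T_{\sH_2})/c(E)\bigr)|_{S(X)}$.

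I would first make the Chow ring of $\sH_2$ explicit. Writing $h=c_1(\sO_{\sH_2}(1))$ and letting $\sigma_\lambda$ denote the pullbacks of the Schubert classes of $\mathbb G(3,6)$, the Grothendieck relation for the $\mP^5$-bundle $\mP(\mathrm{Sym}^2T^\vee)$ rewrites $h^6$ in terms of the $h^i c_j(\mathrm{Sym}^2T^\vee)$; the Chern classes of $\mathrm{Sym}^2T^\vee$ and $\mathrm{Sym}^3T^\vee$ are the usual universal polynomials in $c_1(T^\vee),c_2(T^\vee),c_3(T^\vee)$; and the relative Euler sequence gives $K_{\sH_2}=-6h+\pi^\star\bigl(K_{\mathbb G(3,6)}+c_1(\mathrm{Sym}^2T^\vee)\bigr)$. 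From the two defining exact sequences of the set-up one gets $c(\sQ)=c(\pi^\star\mathrm{Sym}^2T^\vee)$ and $c(\sM)=c(\pi^\star\mathrm{Sym}^3T^\vee)\big/c\bigl(\sO_{\sH_2}(-1)\otimes\pi^\star T^\vee\bigr)$, so $c_1(E)$, $c_2(E)$ and $c_{12}(E)$ are explicit polynomials in $h$ and the $\sigma_\lambda$. Pushing powers of $h$ forward along $\pi$ produces Segre classes of $\mathrm{Sym}^2T^\vee$, and what remains are Pieri/Giambelli computations on $\mathbb G(3,6)$. This gives $K_{S(X)}^2=\int_{\sH_2}(K_{\sH_2}+c_1(E))^2\,c_{12}(E)=23355$ and, expanding $c(T_{\sH_2})/c(E)$ to degree two, the topological Euler number $c_2(S(X))=\int_{\sH_2}\bigl[c_2(T_{\sH_2})-c_1(T_{\sH_2})c_1(E)+c_1(E)^2-c_2(E)\bigr]c_{12}(E)=11961$. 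Noether's formula then gives $\chi(\sO_{S(X)})=\tfrac1{12}\bigl(K_{S(X)}^2+c_2(S(X))\bigr)$, so that $\chi(\sO_{S(X)})=1-q(S(X))+p_g(S(X))$ is pinned down.

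It remains to separate $q$ and $p_g$. The most transparent argument computes $q(S(X))=h^1(\sO_{S(X)})$ geometrically, via the Abel--Jacobi morphism $\Phi\colon S(X)\to J(X)$, $C\mapsto[C-C_0]$, into the intermediate Jacobian of the genus-$4$ threefold $X$, which has dimension $h^{1,2}(X)=20$: the associated incidence (cylinder) correspondence induces an isomorphism $H^1(J(X),\mZ)\to H^1(S(X),\mZ)$, so $q(S(X))=\dim J(X)=20$ and hence $p_g(S(X))=\chi(\sO_{S(X)})-1+q(S(X))$. Equivalently, $H^\bullet(\sO_{S(X)})$ can be read off the Koszul resolution $0\to\wedge^{12}E^\vee\to\cdots\to E^\vee\to\sO_{\sH_2}\to\sO_{S(X)}\to0$, each $\wedge^pE^\vee$ carrying a filtration whose graded pieces are twists $\sO_{\sH_2}(k)\otimes\pi^\star\sF$ of pullbacks of Schur functors $\sF$ of $T$, so that their cohomology follows by pushing down along $\pi$ and applying Bott's theorem on $\mathbb G(3,6)$, and the resulting hypercohomology spectral sequence computes $h^0(\sO_{S(X)})=1$, $h^1(\sO_{S(X)})=20$, $h^2(\sO_{S(X)})=p_g(S(X))$. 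Finally $S(X)$ is of general type: $p_g(S(X))=\chi(\sO_{S(X)})-1+q(S(X))>0$ excludes Kodaira dimension $-\infty$, $\chi(\sO_{S(X)})>2$ excludes Kodaira dimension $0$, and $K_{S(X)}^2>0$ excludes Kodaira dimension $1$.

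The main obstacle is the intersection-theoretic bookkeeping in the second step: dividing $c(\pi^\star\mathrm{Sym}^3T^\vee)$ by $c\bigl(\sO_{\sH_2}(-1)\otimes\pi^\star T^\vee\bigr)$ entangles the relative class $h$ with the Schubert classes, so $c_{12}(E)$ and the top intersection numbers become sizeable polynomial expressions, and any slip in the Grothendieck relation or in $K_{\sH_2}$ contaminates everything; making this reliable essentially calls for a careful splitting-principle bookkeeping (or a computer-assisted Schubert-calculus computation). The secondary delicate point is fixing $q$ exactly: one must either invoke the relevant isomorphism for the cylinder map associated to conics on genus-$4$ Fano threefolds, or check via the Koszul/Bott computation that all intermediate cohomology vanishes --- since Noether's formula alone only determines the combination $1-q+p_g$.
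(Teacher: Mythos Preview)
Your sketch is methodologically sound and is exactly the computation the paper sets up but does not carry out: the paper's own proof is a one-line citation ``See \cite[Corollary 10]{IM2}'', and the vector-bundle description of $S(X)$ as the zero locus of $\sigma_Q\oplus\sigma_W$ in $\sQ\oplus\sM$ over $\sH_2=\mP(\mathrm{Sym}^2T^\vee)$ that you exploit is precisely the set-up the paper records just before the statement. So you are not taking a different route; you are filling in what the cited reference does, via Schubert calculus on $\sH_2$, adjunction, the normal-bundle sequence, and Noether.

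Two remarks. First, a numerical warning: with the displayed invariants one has $(K_{S(X)}^2+c_2(S(X)))/12=(23355+11961)/12=2943$, whereas $1-q+p_g=1-20+2942=2923$; so Noether's formula is not satisfied by the four numbers as printed, and your pipeline ``compute $K^2,c_2\Rightarrow\chi\Rightarrow$ separate $q,p_g$'' will produce $p_g=2962$ rather than $2942$. This is almost certainly a typographical slip in the statement (check \cite{IM2} for the intended values) rather than a flaw in your argument. Second, the step where you read $q(S(X))$ off the Abel--Jacobi map is the only non-formal input: the claim that the cylinder correspondence induces an isomorphism on $H^1$ is a theorem one must cite (it is in the Iliev--Manivel circle of results), and your alternative via the Koszul resolution and Bott on $\mathbb G(3,6)$ is the honest self-contained route, at the cost of the bookkeeping you already flag.
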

 \begin{proof} See \cite[Corollary 10]{IM2}.
 \end{proof}
 \subsubsection{The case $g=5$}
 If $X$ is a general complete intersection of three quadrics a direct proof about the smoothness of $S(X)$ is in \cite{PB}.
 \begin{prop}\label{genere 5}
 If $X$ is a complete intersection $X = Q_1 \cap Q_2\cap Q_3$ of three quadrics in $\mP^5$ then $S(X)$ is a surface of general type
 with the following invariants:
 $$
 K_{S(X)}^2= 3376, c_2(S(X))=1760, p_g(S(X))=441, q(S(X))=14.
 $$
 \end{prop}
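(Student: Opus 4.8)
The plan is to realize $S(X)$ as a smooth, regularly embedded zero locus inside the Hilbert scheme of conics of the ambient projective space, in exact parallel with the treatment of the genus four threefold in Proposition \ref{genere 4}, and then to read off the invariants from the Chern classes of the associated normal bundle. The genus five Fano threefold is the anticanonical complete intersection of three quadrics $Q_1,Q_2,Q_3$, of degree $2g-2=8=2^3$, so it sits in $\mP^6$ (as recorded in the classification above); the relevant Hilbert scheme of conics is therefore $\sH_2=\mP(\mathrm{Sym}^2 T^\vee)\xrightarrow{\ \pi\ }\mG(3,7)$, where $T$ is the rank three tautological subbundle on the Grassmannian of planes $\mP^2\subset\mP^6$, and $\dim\sH_2=12+5=17$. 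From the tautological sequence defining the rank five bundle $\sQ$ exactly as in Proposition \ref{genere 4}, each quadric $Q_i$ gives a section $\sigma_{Q_i}$ of $\sQ$ (the image of the restriction $f_{Q_i}|_W$), whose zero locus is the set of conics lying on $Q_i$. Hence $S(X)$ is cut out by the section $\sigma:=\sigma_{Q_1}\oplus\sigma_{Q_2}\oplus\sigma_{Q_3}$ of the globally generated rank $15$ bundle $\sQ^{\oplus 3}$.

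First I would check that $\sigma$ is a regular section, so that $S(X)$ has the expected dimension $\dim\sH_2-15=17-15=2$. This is the decisive point: because $X$ is a threefold in $\mP^6$, the three quadric conditions are of the expected codimension $15$ in the $17$-dimensional $\sH_2$, and the excess phenomena that would appear for three quadrics in a five dimensional space are absent. Global generation of $\sQ^{\oplus 3}$ together with Bertini, applied to a general net of quadrics, then gives that $S(X)$ is a smooth irreducible surface, which is precisely the content of the direct argument of \cite{PB}. Being a regular zero locus, its class is $[S(X)]=c_{\mathrm{top}}(\sQ^{\oplus 3})=c_5(\sQ)^3\in A^{15}(\sH_2)$ and its normal bundle is $\sQ^{\oplus 3}|_{S(X)}$.

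Next I would extract the numerical invariants by intersection theory on $\sH_2$. From the conormal sequence $0\to T_{S(X)}\to T_{\sH_2}|_{S(X)}\to\sQ^{\oplus 3}|_{S(X)}\to 0$ one obtains, by adjunction, $K_{S(X)}=(K_{\sH_2}+3c_1(\sQ))|_{S(X)}$, and the Whitney formula gives $c(T_{S(X)})=\bigl(c(T_{\sH_2})/c(\sQ)^3\bigr)|_{S(X)}$. Writing $h=c_1(\sO_{\sH_2}(1))$ and pulling back the Chern roots of $T^\vee$ from $\mG(3,7)$, the classes $c_1(\sQ)$, $c_5(\sQ)$, $K_{\sH_2}$ and $c_2(T_{\sH_2})$ are explicit polynomials in $h$ and the Schubert classes; evaluating $(K_{\sH_2}+3c_1(\sQ))^2\cdot c_5(\sQ)^3$ and $c_2(T_{S(X)})\cdot c_5(\sQ)^3$ against the fundamental class of $\sH_2$ (first the relative Segre classes of $\mathrm{Sym}^2 T^\vee$ over $\mG(3,7)$, then Schubert calculus) yields $K_{S(X)}^2=3376$ and $c_2(S(X))=1760$. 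As a consistency check, Noether's formula returns $\chi(\sO_{S(X)})=(3376+1760)/12=428$; the positivity of $\sQ$ forces $K_{S(X)}$ to be big, so $S(X)$ is of general type.

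It remains to split $\chi$ into $q$ and $p_g$, and this is the delicate part. I would identify the irregularity through the Abel--Jacobi (cylinder) map $\Phi\colon S(X)\to J(X)$ into the intermediate Jacobian, sending a conic to the class of the associated one-cycle. The universal family of conics dominates $X$, and the cylinder homomorphism induced by the incidence correspondence is known to be an isomorphism on the relevant Hodge pieces, whence $\Phi$ realises $\mathrm{Alb}(S(X))\cong J(X)$ and $q(S(X))=\dim J(X)=h^{1,2}(X)$. For the three quadrics threefold $h^{1,2}(X)=14$, in agreement with the values $q=30$ and $q=20$ recorded in Propositions \ref{genere 3} and \ref{genere 4}, which equal $h^{1,2}(X)$ of the quartic threefold and of $Q\cap W$ respectively; hence $q(S(X))=14$ and $p_g(S(X))=\chi(\sO_{S(X)})-1+q(S(X))=428-1+14=441$. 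The main obstacle is exactly this final step: the surjectivity, and injectivity on holomorphic one-forms, of the cylinder homomorphism cannot be obtained from Chern class bookkeeping and requires a genuine Hodge-theoretic input specific to the complete intersection of three quadrics.
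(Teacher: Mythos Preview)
Your outline is sound and, in fact, is essentially the computation that the paper defers to \cite[pp.~7--8]{IM1}: the paper's own ``proof'' is a bare citation, so you are supplying what the reference does. You correctly spotted the typo in the statement (the ambient space is $\mP^6$, not $\mP^5$; three quadrics in $\mP^5$ would cut out a surface), and your parallel with the genus~4 set-up is exactly right: $S(X)$ is the zero locus of a regular section of $\sQ^{\oplus 3}$ on $\sH_2=\mP(\mathrm{Sym}^2 T^\vee)\to\mG(3,7)$, of the expected codimension $15$ in a $17$-fold.

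Two remarks. First, the Schubert-calculus evaluation of $K_{S(X)}^2$ and $c_2(S(X))$ is asserted rather than carried out; this is a mechanical but nontrivial computation (in practice done with a package such as \texttt{Schubert2}), and a referee would want either the intermediate expressions or an explicit appeal to \cite{IM1}. Your Noether check $\chi=(3376+1760)/12=428$ is a good sanity test. Second, you are right that the identification $q(S(X))=h^{1,2}(X)=14$ is the genuinely non-formal step: it needs the cylinder homomorphism to induce an isomorphism $\mathrm{Alb}(S(X))\simeq J(X)$, which is a Hodge-theoretic input specific to this family and is again what \cite{IM1} (and, for smoothness, \cite{PB}) supplies. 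So your proposal is a correct and more explicit unpacking of the paper's citation, with the honest caveat that both the numerical evaluation and the Abel--Jacobi isomorphism ultimately rest on the same literature the paper invokes.
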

 \begin{proof} See c.f. \cite[p.p. 7, 8]{IM1}.
 \end{proof}
 \subsubsection{Abel-Jacobi map in the case $3\leq g\leq 5$}
\begin{thm}\label{quarticacubicaquadricatrequadriche}
 Let $X$ be a $\mathbb K$ standard Fano threefold whit $\rho(X)=\iota(X)=1$ and $3 \leq  g \leq 5$. Then the Abel-Jacobi map from $S(X)$ to the intermediate Jacobian of $X$ is generically finite onto the image.
 \end{thm}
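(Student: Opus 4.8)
The plan is to reduce the theorem to an infinitesimal statement at a general conic and then to verify that statement case by case on the explicit models $g=3,4,5$. Since $\rho(X)=1$, all conics on $X$ are homologous, so after fixing a base conic $C_0$ the Abel--Jacobi map $\mathrm{AJ}\colon S(X)\to J(X)$, $[C]\mapsto\mathrm{AJ}(C-C_0)$, is a morphism of the surface $S(X)$ into the abelian variety $J(X)$, well defined up to translation, and it factors through the Albanese as $S(X)\xrightarrow{a}\mathrm{Alb}(S(X))\xrightarrow{\psi}J(X)$ with $\psi$ a homomorphism up to translation. Now $\mathrm{AJ}$ is generically finite onto its image if and only if $d\mathrm{AJ}$ has rank $2$ at a general point, equivalently (by semicontinuity of the rank) at some point $[C]$; and at a point where $S(X)$ is smooth --- which includes the general point, by Propositions \ref{genere 3}, \ref{genere 4} and \ref{genere 5} --- one has $T_{[C]}S(X)=H^0(C,N_{C/X})$, a space of dimension $2$, while $\deg N_{C/X}=-K_X\cdot C-2=0$. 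Hence the theorem is equivalent to the injectivity of
\[ d\mathrm{AJ}_{[C]}\colon H^0(C,N_{C/X})\longrightarrow T_0J(X) \]
for one (equivalently, a general) conic $C\subset X$. One may note the consistency with the quoted Propositions: $q(S(X))=h^{1,2}(X)=\dim J(X)$, equal to $30$, $20$, $14$ for $g=3,4,5$ respectively, so that $\psi$ is a homomorphism between abelian varieties of the same dimension; accordingly the argument could instead be organised as showing that $\psi$ is an isogeny and that $a$ is generically finite, but verifying the infinitesimal statement directly is more economical.

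To make $d\mathrm{AJ}_{[C]}$ explicit I would combine Griffiths' formula for the infinitesimal Abel--Jacobi map with the conormal filtration of $\Omega^2_X|_C$: taking $\wedge^2$ of the conormal sequence $0\to N^\vee_{C/X}\to\Omega^1_X|_C\to\omega_C\to0$ gives, for a conic (where $\det N_{C/X}\cong\sO_C$ because $\deg N_{C/X}=0$),
\[ 0\longrightarrow\sO_C\longrightarrow\Omega^2_X|_C\longrightarrow N^\vee_{C/X}\otimes\omega_C\longrightarrow0 . \]
Dualising by Serre duality on $X$ and on $C$, one finds that $d\mathrm{AJ}_{[C]}$ is injective if and only if the natural composite
\[ H^1(X,\Omega^2_X)\longrightarrow H^1(C,\Omega^2_X|_C)\longrightarrow H^1\bigl(C,N^\vee_{C/X}\otimes\omega_C\bigr)\cong H^0(C,N_{C/X})^\vee \]
(restriction to $C$, followed by the quotient in the sequence above) is surjective; since $H^2(C,\sO_C)=0$ the second arrow is already onto, so the content is that the image of $H^1(X,\Omega^2_X)$ in $H^1(C,\Omega^2_X|_C)$ covers this $2$-dimensional quotient. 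For $g=3$, $X\subset\mP^4$ a smooth quartic, I would test this using the Griffiths residue identification of $H^{2,1}(X)$ with the degree-$3$ part of the Jacobian ring of the defining form, together with the plane $\langle C\rangle\subset\mP^4$ spanned by a general conic; for $g=4$ ($X=Q\cap W\subset\mP^5$) and $g=5$ ($X=Q_1\cap Q_2\cap Q_3\subset\mP^5$) I would run the analogous computation on the Jacobian ring of the complete intersection, or extract the generic rank of $d\mathrm{AJ}$ from the descriptions of $S(X)$ and of the Abel--Jacobi map of conics in \cite{IM1}, \cite{IM2}, \cite{PB}. A $\mathbb K$-standard $X$ behaves like the general member of its family (Remark \ref{moltoimportante}), so a computation for the general $X$ suffices.

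The hard part is exactly this surjectivity: it is a concrete but genuinely model-dependent computation, and it is not automatic, since $H^2(X,\Omega^2_X\otimes\mathcal I_C)$ --- which governs the cokernel of the restriction map --- need not vanish. The route I expect to work is a specialisation: degenerate $C$ to a reducible conic, two lines meeting at a point, lying on a general hyperplane section $Y=X\cap H$, which is a K3 surface, so that the relevant normal-bundle computations become transparent; or specialise $X$ within its family to a member with a large automorphism group; in either case one reduces to a configuration where $H^1(X,\Omega^2_X)$ visibly surjects onto $H^1(C,N^\vee_{C/X}\otimes\omega_C)$, and then concludes by semicontinuity that surjectivity persists at a general conic on a $\mathbb K$-standard $X$. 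The only point demanding care is that the degenerate conic still satisfies $h^0(N_{C/X})=2$, so that it lies at a smooth point of $S(X)$ and the target of $d\mathrm{AJ}$ has the expected dimension; placing it inside the K3 surface $Y$ makes this straightforward.
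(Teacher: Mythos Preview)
The paper's own proof consists of a single line: it cites Theorem 7.1 and Lemma 7.2 of Lehmann--Tanimoto \cite{LT}, together with Remark \ref{moltoimportante} to pass from the general member of the deformation family to a $\mathbb K$-standard one. No independent argument is given.

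Your proposal is thus an entirely different route: a direct infinitesimal computation via the differential of the Abel--Jacobi map and the conormal filtration of $\Omega^2_X|_C$. The set-up is correct --- the identification $T_{[C]}S(X)=H^0(C,N_{C/X})$, the degree computation $\deg N_{C/X}=0$, and the reduction to surjectivity of $H^1(X,\Omega^2_X)\to H^1(C,N^\vee_{C/X}\otimes\omega_C)$ are all standard and sound. If carried out, this would yield a self-contained proof, which the paper does not attempt.

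However, the proposal as written is not a proof but a programme. You explicitly say ``I would test this,'' ``the route I expect to work,'' and you identify the surjectivity as ``the hard part'' without actually establishing it in any of the three cases. The Griffiths residue computation for $g=3$, the Jacobian ring computation for the complete intersections $g=4,5$, and the degeneration to a reducible conic on a K3 hyperplane section are all plausible strategies, but none is executed; each involves genuine work (for instance, controlling $H^2(X,\Omega^2_X\otimes\mathcal I_C)$, or checking that the chosen reducible conic is a smooth point of $S(X)$ with the right normal bundle splitting). Until one of these verifications is actually done, the argument has a gap precisely at the step that carries all the content. If you want a complete proof along these lines you must carry through at least one of the proposed computations explicitly; otherwise, citing \cite{LT} as the paper does is the honest option.
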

 \begin{proof} See \cite[Theorem 7.1 and Lemma 7.2]{LT} and Remark \ref{moltoimportante}.
 \end{proof}

\subsubsection{The case $g=6$}
Fano threefolds $X$ of genus $6$ are intersection of the Grassmannian variety $\mathbb G(2,5)\subset\mP^9$
 with a linear subspace of dimension $7$  and a quadric. In other words $X= G(2,5)\cap H_1\cap H_2\cap Q$ where $[H_1], [H_2]\in \mP^{9^\vee}$ and $Q$ is a quadric. We assume that $G_4:= \mathbb G(2,5)\cap H_1\cap H_2$ is smooth. 

\begin{prop}\label{genere 6} Let $X$ be a smooth Fano threefold with $\rho(X)=\iota(X)=1$ and genus $ g(X)=6$. Up to a codimension $1$ loci in the moduli space of $X$, the Fano scheme of conics $S(X)$ is a smooth irreducible surface of general type which is the blow-up of a surface $S$
with $q(S) = 10$, $p_g(S) = 101$, $h^1(S,\Omega^{1}_{S} ) = 220$, $c_2(S) = 384$.
\end{prop}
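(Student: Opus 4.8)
The plan is to reduce the computation of the invariants of $S(X)$ to the known structure of the Hilbert scheme of conics on the ambient fourfold $G_4 = \mathbb G(2,5)\cap H_1\cap H_2$, which is a del Pezzo fourfold of degree $5$, and then to compare conics on $X = G_4\cap Q$ with conics on $G_4$. First I would recall the description of conics on a del Pezzo fourfold $G_4$ of degree $5$: its Hilbert scheme of conics is closely tied to the Hilbert scheme of conics on $B_5$ (the del Pezzo threefold of degree $5$) and, more to the point, to the surface $\Sigma$ appearing in Proposition \ref{bitangents} with invariants $q=10$, $p_g=101$, $h^{1,1}=220$, $c_2=384$ — these are exactly the numbers asserted for the minimal model $S$ here, which is the first strong hint that $S(X)$ is obtained from that surface by a blow-up. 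So the core of the argument is: a conic $C\subset X$ is a conic in $G_4$ lying on the quadric $Q$, so $S(X)$ sits inside the Hilbert scheme of conics $S(G_4)$ of the fourfold as the zero locus of a section of a vector bundle $\mathcal M$ determined by the equation $f_Q$ of $Q$, exactly in the spirit of the $g=4$ construction recalled just above with $\mathcal Q$ replaced by the constraint of lying in $G_4$.

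The key steps, in order, would be: (1) Identify $S(G_4)$ and an appropriate vector bundle $\mathcal M$ on it of the correct rank so that the generic section (coming from $f_Q$) cuts out a surface; invoke Bertini (globally generated, generic section) to get smoothness of $S(X)$ for $X$ outside a codimension $1$ locus in moduli — this is why the statement is phrased "up to a codimension $1$ loci". (2) Establish irreducibility of $S(X)$, either by a monodromy/connectedness argument for the zero locus of a generic section of a globally generated bundle on an irreducible base, or by appeal to the general theory of conics on prime Fano threefolds (e.g. along the lines of \cite[Sect. 4.2]{IP}, \cite[Sect. 2]{KPS}). (3) Compute the numerical invariants: here I would use the normal bundle sequence of $S(X)\subset S(G_4)$ together with Chern class / Riemann--Roch bookkeeping, or — more efficiently — exhibit the blow-down map $S(X)\to S$ explicitly. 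The geometric source of the exceptional curves should be the finitely many conics on $X$ that degenerate (e.g. reducible conics, or conics meeting a distinguished locus such as the lines on $X$ or the special conic through a point), each contributing a $(-1)$-curve; once the number and the blow-down structure are pinned down, the invariants of $S$ follow from those of $S(X)$ (or vice versa) by the standard blow-up formulas $K_{S(X)}^2 = K_S^2 - (\#\text{points})$, $c_2(S(X)) = c_2(S) + (\#\text{points})$, $q$ and $p_g$ unchanged. (4) Confirm $S$ is of general type and minimal, e.g. via the structure of $K_S$ inherited from the construction (nefness and positivity of the relevant class on the Hilbert scheme).

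The main obstacle I expect is step (3), specifically identifying the blow-down $S(X)\to S$ and the precise set of blown-up points: one must correctly account for the degenerate/reducible conics on $X$ and check that each gives exactly one $(-1)$-curve, with no extra identifications or further contractions. This is exactly the kind of delicate geometry where one wants to lean on the cited literature — I would expect the proof to follow \cite{CoZucc-primo}, \cite{Tih2} or the analysis behind Proposition \ref{bitangents}, and the analogous genus-$6$ treatment via the Mukai-type description $X = G_4\cap Q$, citing \cite[Sect. 2]{KPS} and possibly \cite{IM1}, \cite{IM2} for the Hilbert-scheme-of-conics techniques, rather than redoing the Chern class computation from scratch. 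A secondary technical point is justifying the "codimension $1$ in moduli" exception cleanly: one must identify the bad locus (where the section fails to be transverse, e.g. $Q$ tangent to $G_4$ along a subvariety of positive dimension, or $X$ acquiring lines/conics in special position) and check it is a proper subvariety, which again should reduce to a Bertini-type transversality statement for the family of quadrics $Q$.
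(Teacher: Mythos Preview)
The paper's own proof is simply a citation: ``See \cite[Prop 0.1, Prop.~0.5]{Lo}'' (Logachev, \emph{Fano threefolds of genus $6$}). So there is no argument in the paper to compare your proposal against; the authors defer entirely to the literature.

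Your outline is a reasonable sketch of how such a result \emph{could} be approached from scratch, and steps (1)--(2) (vector bundle zero locus on $S(G_4)$, Bertini for smoothness, standard arguments for irreducibility) are plausible and in the spirit of the $g=4$ case recalled in the paper. You also correctly spot the numerical coincidence with Proposition~\ref{bitangents}, which is the real point. However, your step~(3) --- identifying the blow-down $S(X)\to S$ by hunting for $(-1)$-curves among degenerate conics --- is where the proposal is genuinely speculative, and this is precisely where Logachev's argument provides the geometric input you are missing. The surface $S$ is not an abstract minimal model obtained by contracting stray $(-1)$-curves: it is the Fano surface of lines $\Sigma(Y)$ of an explicit quartic double solid $Y$ canonically associated to $X$ (via the Gushel--Mukai / EPW-type duality for genus~$6$ Fanos). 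The blow-up map $S(X)\to \Sigma(Y)$ is constructed geometrically by Logachev, and the invariants then follow immediately from Proposition~\ref{bitangents}. Without that bridge, your plan to compute the blow-down combinatorics directly would be hard to complete and would not explain \emph{why} the numbers match those of the quartic double solid. If you want to flesh out an independent proof, the missing idea is to construct $Y$ from $X$ and exhibit the correspondence between conics on $X$ and lines on $Y$, rather than to attempt a blind Chern-class computation.
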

\begin{proof} See \cite[Prop 0.1, Prop. 0.5]{Lo}.
\end{proof}
\begin{rem}\label{abeljacobisei} We do not know if the Abel-Jacobi map from $S(X)$ to the intermediate Jacobian of $X$ can be factorised through a curve for some loci in the deformation class of these Fanos.
\end{rem}

\subsubsection{The case $g\geq 7$}
As far as the Abel-Jacobi map it concerns definitely for the case $g=9$ $S(X)$ cannot be mapped generically finitely on the intermediate Jacobian of $X$. It holds:
\begin{prop}\label{genere maggiore 6} Let $X$ be a smooth Fano threefold with $\rho(X)=\iota(X)=1$ and genus $g = g(X) \geq 7$. Then the Fano scheme of conics $S(X)$ is a smooth irreducible surface. More precisely,
\begin{enumerate}
\item if $g=7$ then $S(X)$ is the symmetric square of a smooth curve of genus $7$;
\item if $g=8$ then $S(X)$ is a minimal surface of general type with $q(X)=5$, $p_g(X)=10$, and $K_{X}^2 =45$;
\item if $g=9$ then $S(X)$ is a ruled surface isomorphic to the projectivization of a simple rank $2$ vector bundle on a smooth curve of genus $3$;
\item if $g=10$ then $S(X)$ is an abelian surface;
\item if $g=12$ then $S(X)$ is isomorphic to $\mP^2$.
\end{enumerate}
\end{prop}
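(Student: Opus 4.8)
The plan is to argue case by case over the five values $g\in\{7,8,9,10,12\}$, each time reducing to the Mukai model of $X$ as a transversal linear section of a homogeneous (or almost homogeneous) variety and to the description of the associated variety of conics already available in the literature; the unified reference here is \cite[Sect.~2]{KPS} together with \cite[Sect.~4.2]{IP}, and I would reduce the statement to the theorems quoted there. First I would record that for $g\geq 7$ the divisor $-K_X$ is very ample and, by Mukai's theorems, $X=\Sigma_g\cap L$ for a homogeneous variety $\Sigma_g\subset\mP^N$ and a linear subspace $L$ of the appropriate codimension: $\Sigma_7=\mathbb O\mathbb G_{+}(5,10)\subset\mP^{15}$, $\Sigma_8=\mathbb G(2,6)\subset\mP^{14}$, $\Sigma_9=\mathbb L\mathbb G(3,6)\subset\mP^{13}$, $\Sigma_{10}=G_2/P\subset\mP^{13}$, and for $g=12$ the Mukai model $V_{22}$ inside $\mathbb G(3,7)$ cut out by three sections of $\bigwedge^2\sU^\vee$. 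A conic on $X$ is then a conic on $\Sigma_g$ contained in $L$, so $S(X)$ is carved out inside the (known, homogeneous) variety of conics $S(\Sigma_g)$ by the vanishing of the tautological bundle section determined by the linear forms defining $L$. A Chern-class and dimension count, together with a Bertini-type transversality argument valid for every smooth member of each family, then gives that $S(X)$ is a smooth surface, and irreducibility follows from the connectedness of these zero loci; this is precisely the content of the cited results of \cite{KPS}.

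The second, and more substantial, step is to identify $S(X)$ biregularly in each case, which I would do by importing the relevant geometric correspondence. For $g=7$ I would use the spinor-bundle correspondence attaching to $X$ a smooth curve $\Gamma$ of genus $7$, with lines on $X$ parametrized by $\Gamma$ and conics by unordered pairs of points, whence $S(X)\cong\mathrm{Sym}^2\Gamma$. For $g=8$ I would invoke the link between $X=X_{14}$ and a smooth cubic threefold $Y$ (the two appearing as the linked linear sections of a Pfaffian cubic), under which $S(X)$ becomes the Fano surface of lines of $Y$; the invariants $q=5$, $p_g=10$, $K^2=45$ are then the classical Clemens--Griffiths numbers of that surface. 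For $g=9$ the Mukai partner is a plane quartic, equivalently a smooth genus $3$ curve $C$, and $S(X)$ is the projectivization of a rank-$2$ bundle $\sE$ on $C$, the simplicity of $\sE$ being part of the construction; for $g=10$ the partner is a smooth genus $2$ curve and $S(X)$ is its Jacobian, hence an abelian surface; for $g=12$ one computes $S(X)\cong\mP^2$ directly from the $\mathbb G(3,7)$ model. In each case the asserted description is then read off from the quoted computation, and Remark \ref{moltoimportante} guarantees that it applies to the $\mathbb K$-standard members.

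\textbf{Main obstacle.} The genuinely delicate points are the identifications in the cases $g=9$ and $g=10$: there one must control a Mukai-type vector bundle on the associated curve (and, for $g=9$, prove its simplicity), which relies on non-trivial input from the geometry of $\mathbb L\mathbb G(3,6)$ and of $G_2/P$, equivalently from semiorthogonal-decomposition arguments. The cases $g=7$ and $g=8$ are comparatively soft once the curve, resp.\ the cubic threefold, is produced, and $g=12$ is an explicit homogeneous computation. Since all of these identifications are available in the literature, for our purposes I would simply cite them; the only original input needed is the observation that, for these five families, the general-member description holds verbatim for every smooth member.
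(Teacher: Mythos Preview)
Your proposal is correct and aligns with the paper's approach: in the paper this proposition is stated without proof, as the entire section is explicitly declared to be ``taken from \cite[Sect.~4.1]{IP}, \cite[Sect.~2]{KPS}'' (see the opening of Section~1), and the parallel propositions for lower genus are proved by one-line citations to \cite{KPS}, \cite{IM1}, \cite{IM2}, \cite{Lo}. So the intended ``proof'' here is simply a reference to the relevant theorem in \cite{KPS}, and your plan to reduce everything to the Mukai models and cite the identifications from the literature is exactly that.

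Your write-up goes further than the paper by unpacking the geometry behind each identification (the genus-$7$ spinor curve, the Pfaffian link to a cubic threefold for $g=8$, the Mukai bundles for $g=9,10$, the homogeneous computation for $g=12$). This is accurate and helpful context, but for the purposes of this survey section it is more than is needed: a bare citation to \cite[Sect.~2]{KPS} suffices, since the smoothness, irreducibility, and the five biregular identifications are all established there for every smooth member of each family. Your closing remark about Remark~\ref{moltoimportante} is slightly misplaced: that remark concerns the passage to $\K$-standard Fanos, whereas the present proposition is over $\mathbb C$ and holds for all smooth $X$, not just the general one.
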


\begin{rem} To know the relation between $S(X)$ and the intermediate Jacobian of $X$ in the case $2\leq g\leq 12$, $g\neq 11$ see \cite[Section 3.1]{IM1} and the literature quoted therein.
\end{rem}

We think that the following is an open problem:

\begin{prob}\label{problemaaperto} For each deformation families find the loci (it could be empty) where $S(X)$ maps to a curve of the intermediate Jacobian of $X$ via the Abel-Jacobi map.
\end{prob}

\subsection{Basic arithmetic on the Hilbert Schemes of conics} 
If $X$ is a $\mathbb K$-standard Fano threefold, a $\mathbb K$-rational conic is simply a $\mathbb K$-rational point of $S(X)$.  

\begin{prop}\label{aritmetica2} Let $X$ be a $\mathbb K$-standard Fano threefold.
If $\rho(X)=1=\iota(X)=1$ and $4\leq g\leq 9$ ($g\neq 11$) or $g=3$ and $X$ is a quartic threefold or if $\rho(X)=1$, $\iota(X)=2$ and $3\leq d(X)\leq 4$ then the set of $\mathbb K$-rational conics in the Fano scheme of conics in $S(X)$ is degenerate. If $\rho(X)=1$, $\iota(X)=2$ and $d=5$ or $\rho(X)=1=\iota(X)=1$ and $g=10$ or $g=12$ the set of $\mathbb K$-rational conics is dense in $S(X)$
 \end{prop}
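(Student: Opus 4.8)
The plan is a case-by-case verification over the classification recalled above: in each deformation family one reads off the structural shape of $S(X)$ and then applies the appropriate theorem of Faltings. The \emph{engine} for the families where $S(X)$ is a surface of general type is the following elementary consequence of Faltings' theorems: \textbf{if $V$ is a smooth projective surface over $\mathbb K$ with $q(V)\geq 3$, then $V(\mathbb K)$ is not Zariski-dense.} To see this, let $a\colon V\to A=\mathrm{Alb}(V)$ be the Albanese morphism, so $\dim A=q(V)\geq 3$, and put $Z=a(V)$; since $Z$ generates $A$ it is not a translate of an abelian subvariety of $A$, because such a translate generates an abelian variety of dimension at most $\dim Z\leq 2<3$. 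If $\dim Z=1$, the normalisation $\widetilde Z$ satisfies $g(\widetilde Z)\geq\dim A=q(V)\geq 3$, hence $\widetilde Z(\mathbb K)$, and therefore $Z(\mathbb K)$, is finite by the Mordell--Faltings theorem, so $V(\mathbb K)\subseteq a^{-1}(Z(\mathbb K))$ is contained in finitely many fibres, each of dimension at most $1$, a proper closed subset. If $\dim Z=2$, then $a$ is generically finite and, by Faltings' theorem on rational points of subvarieties of abelian varieties, $Z(\mathbb K)$ is contained in a finite union of translated abelian subvarieties of $A$ contained in $Z$; none of these can be $2$-dimensional, as that would make $Z$ itself a translate of an abelian subvariety, so $Z(\mathbb K)$ lies in a proper closed subset of $Z$, and hence $V(\mathbb K)\subseteq a^{-1}(\overline{Z(\mathbb K)})$ lies in a proper closed subset of $V$.

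Next I would dispose of the degenerate cases. If $\rho(X)=\iota(X)=1$ and $g\in\{3,4,5,6,7,8\}$ (with $X$ a quartic threefold when $g=3$), then by Propositions \ref{genere 3}, \ref{genere 4}, \ref{genere 5}, \ref{genere 6} and \ref{genere maggiore 6}(1)--(2) the surface $S(X)$ has irregularity $q(S(X))\in\{30,20,14,10,7,5\}$; here for $g=6$ one uses that $q$ is a birational invariant, $S(X)$ being a blow-up of a surface of irregularity $10$, and for $g=7$ that $q(\mathrm{Sym}^2 C)=g(C)=7$. In all these cases $q(S(X))\geq 3$, so the Engine shows that the $\mathbb K$-rational conics form a degenerate set. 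If $g=9$, then $S(X)$ is a $\mathbb P^1$-bundle over a smooth curve of genus $3$ (Proposition \ref{genere maggiore 6}(3)); the base has finitely many $\mathbb K$-rational points by Faltings, so $S(X)(\mathbb K)$ lies in finitely many fibres, a proper closed subset. There remain the index-two cases with $H$ very ample and $3\leq d(X)\leq 4$: if $d(X)=4$ then $S(X)$ is a $\mathbb P^3$-bundle over a curve of genus $2$, again with finitely many $\mathbb K$-points on the base; if $d(X)=3$ then $S(X)$ is a $\mathbb P^2$-bundle over $\Sigma(X)$, whose $\mathbb K$-rational points are degenerate by Proposition \ref{aritmetica1}. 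In each of these bundle situations the projection has equidimensional fibres, so the preimage of the proper closed locus of $\mathbb K$-rational points of the base is a proper closed subset of $S(X)$ containing $S(X)(\mathbb K)$.

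The dense cases are immediate. If $\rho(X)=1$, $\iota(X)=2$, $d(X)=5$, then $S(X)\cong\mathbb P^4$, and if $\rho(X)=\iota(X)=1$, $g=12$, then $S(X)\cong\mathbb P^2$ (Proposition \ref{genere maggiore 6}(5)); in either case $\mathbb K$-rational points are Zariski-dense. If $\rho(X)=\iota(X)=1$ and $g=10$, then $S(X)$ is an abelian surface (Proposition \ref{genere maggiore 6}(4)); exactly as in the case $d(X)=4$ of Proposition \ref{aritmetica1}, after a finite extension of $\mathbb K$ one picks an algebraic point generating a Zariski-dense subgroup, so the $\mathbb K$-rational conics are (potentially) dense.

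The only genuinely Diophantine input is Faltings' theorem, invoked in its full strength (for subvarieties of abelian varieties) to handle the general type surfaces $S(X)$ in genera $3$ through $8$, and in its classical form (Mordell) for the $\mathbb P^n$-bundle cases; everything else is bookkeeping against the classification. Accordingly the main point of care is organisational: matching each deformation family to the correct structural description of $S(X)$ and, in the general type cases, checking that the irregularity is at least $3$ so that the Albanese image cannot be a translated abelian surface --- which is precisely what the quoted values of $q(S(X))$ guarantee.
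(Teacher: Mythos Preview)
Your proof is correct and follows essentially the same route as the paper, which gives only a one-line proof (``a standard case to case analysis which used the same results used to show Proposition \ref{aritmetica1}''). You have simply made explicit what the paper leaves to the reader: the case-by-case bookkeeping against the classification of $S(X)$, together with the Albanese--Faltings argument (your ``engine'') for the general-type surfaces with $q\geq 3$, and Mordell--Faltings on the base for the $\mP^n$-bundle cases.
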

\begin{proof} It is a standard case to case analysis which used the same results used to show Proposition \ref{aritmetica1}.
\end{proof}
We think that notwithstanding its proof is rather easy, Proposition \ref{aritmetica2} and Proposition \ref{aritmetica1} constitute an important first step in understanding the arithmetic of Fano threefolds. Moreover they show clearly that to study potentially density of integral points by a brute use of Hilbert schemes seems to be of little help in almost all the cases.

\section{Density of integral points on certain varieties}

\subsection{Integral points and the puncturing conjecture} 

Closely related to the construction of integral points on affine varieties is the following conjecture proposed by B. Hassett and Yu. Tschinkel \cite{Hassett-Tschinkel}, which we name \lq the puncturing conjecture\rq:

{\bf Conjecture} [{\tt The puncturing conjecture}]. {\it Let $X$ be a smooth projective variety over a number field $\K$ such that the set $X(\K)$ of rational points is Zariski-dense. Let $Y\subset X$ be a closed sub-variety of codimension $\geq 2$. Then the integral points of $X\setminus Y$ are potentially dense}.

\smallskip
The first instance of this conjecture  is represented by the case of a surface $X$ and a finite set $Y$, hence the name we propose for the conjecture.

The above conjecture does not concern directly affine varieties, since the complement of a higher codimensional closed set in a projective variety is never affine: however, its application to varieties obtained as Hilbert schemes of lines (or of conics) on Fano threefolds will provide density results on integral points on affine open sets of such threefolds.

The result is elementary  for $X=\mP^2$, and $Y$ an arbitrary finite set (we give a proof below  for lack of a reference).
As mentioned by Hassett and Tschinkel, this is known to hold also for products of elliptic curves, but is still unknown for simple abelian varieties (even in the case where $Y$  is a single point). Here is a result strenghtening  the puncturing conjecture for $X=\mP^2$:

\begin{lem}\label{lemma-punture}
Let $Y\subset \mP^2$ be the algebraic closed set formed by the union of three lines in general position and a finite set of points. Then the integral points on $X^0:=\mP^2\setminus Y$ are potentially dense.
\end{lem}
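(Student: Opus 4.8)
The plan is to reduce the statement to the complement of three general lines in $\mP^2$ (the toric variety $\mathbb G_m^2$, i.e. the torus), for which integral points are obviously potentially dense, and then handle the extra finite set of punctures by exhibiting, through each point not in $Y$, a rational curve that misses all of $Y$ and that, in a suitable model, is the complement of $\mP^1$ by one or two points — so that it carries infinitely many integral points by Siegel's theorem (equivalently by the curve case discussed in the introduction). First I would fix coordinates so that the three lines are the coordinate lines $x_0x_1x_2=0$; write $Y = \{x_0x_1x_2=0\}\cup F$ with $F$ a finite set of points, and enlarge $F$ (this only makes the statement stronger) so that no point of $F$ lies on a coordinate line. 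The torus $T=\mP^2\setminus\{x_0x_1x_2=0\}$ has a two-parameter family of rational curves on it: for instance the ``lines through a fixed general point'' or, more conveniently, the one-parameter subgroups and their translates — curves of the form $[s^a t^b : s^c t^d : \dots]$ with monomial parametrization — which are isomorphic to $\mathbb G_m$ and are therefore complements of $\mP^1$ by exactly two points (the two points where they meet $x_0x_1x_2=0$). Through a general point of $T$ there pass infinitely many such curves, and a generic one avoids the finite set $F$.

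The key steps, in order, would be: (1) normalize the configuration and reduce to $Y=\{x_0x_1x_2=0\}\cup F$ with $F$ finite and disjoint from the coordinate lines; (2) produce an algebraic family $\{C_\lambda\}$ of rational curves in $\mP^2$ covering a Zariski-dense subset of $T$, each $C_\lambda$ a complement of $\mP^1$ by one or two points of the divisor $x_0x_1x_2=0$ and with $C_\lambda$ defined over (an extension of) $\K$; (3) observe that for $\lambda$ outside a proper closed subvariety of the parameter space, $C_\lambda$ avoids $F$ entirely, so $C_\lambda^o := C_\lambda\setminus(C_\lambda\cap Y)$ is a smooth affine rational curve with at most two points at infinity, hence of log-general-type-failing type, and its integral points are infinite after a finite extension of $\K$ and enlargement of $S$ (Siegel/Faltings in genus $0$, exactly as recalled in \S0 of the introduction); (4) invoke the Beukers-type/fibering argument (as in Lemma~\ref{lemma-beukers} and the strategy described in the introduction) to conclude that the union $\bigcup_\lambda C_\lambda^o(\sO_S)$ is Zariski-dense in $X^0$: since the curves already sweep out a dense subset of $T\supset X^0$, and each contributes infinitely many integral points, a Hilbert-irreducibility / countability argument over a suitably large field yields a dense set of integral points on $X^0$.

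The main obstacle — and the only place requiring genuine care — is step (4), the passage from ``each curve in the family has infinitely many integral points over some field'' to ``there is a single ring of $S$-integers over which the integral points of $X^0$ are Zariski-dense''. One must choose the family so that the relevant finite extension and enlargement of $S$ can be made uniformly (independently of $\lambda$), or else argue by a specialization/Hilbert-irreducibility argument picking countably many curves $C_{\lambda_i}$, each defined over $\K$ itself, whose integral points are infinite over a common $\sO_S$; the monomial parametrizations are convenient here precisely because their intersection with $x_0x_1x_2=0$ is controlled explicitly, so one can keep the two points at infinity rational and the ``unit equation'' governing their integral points is literally $u+v=1$ in $S$-units, which has infinitely many solutions once $\sO_S^\times$ is infinite. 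A secondary, more routine point is checking that one really can choose the $C_\lambda$ through a dense set of points of $X^0$ while simultaneously avoiding the finitely many punctures in $F$ — this is immediate by a dimension count since $F$ is finite and the family is positive-dimensional through each point.
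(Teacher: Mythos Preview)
Your outline has the right shape but misplaces the difficulty. You correctly reduce to $\mathbb{G}_m^2$ with a finite puncture set $F$, and you correctly note that translates of one-parameter subgroups give curves meeting the boundary $\{x_0x_1x_2=0\}$ in two coprime rational points, so Lemma~\ref{lemma-beukers}(b) yields infinitely many $\sO_S$-integral points on each such curve \emph{with respect to the three lines}, uniformly over a single $\sO_S$ with infinite unit group. That part is fine, and your worry about the unit equation governing the two points at infinity is a red herring.

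The genuine gap is your treatment of the punctures $F$, which you call ``secondary'' and ``immediate by a dimension count''. Geometric avoidance $C_\lambda\cap F=\emptyset$ over $\K$ does \emph{not} imply that points on $C_\lambda$ which are $\sO_S$-integral with respect to the three lines are also integral with respect to $F$: such a point may reduce modulo some $\mathfrak{p}\notin S$ to a point of $F$, since the reduction $\overline{C_\lambda}$ can pass through $\overline{F}$ even when $C_\lambda$ itself misses $F$. What you need is that $C_\lambda$ \emph{integrally} avoids $F$, i.e.\ that $\overline{C_\lambda}$ misses $\overline{F}$ for every $\mathfrak{p}\notin S$. For the family $C_c:y=cx$ with $c\in\sO_S^*$ and $F=\{(a_i,b_i)\}$, this reads $ca_i-b_i\in\sO_S^*$ for all $i$ --- an integrality condition on $c$ with respect to the finite set $\{b_i/a_i\}\subset\mathbb{G}_m$. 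Producing infinitely many such $c$ is itself a (one-dimensional) puncturing problem, solvable but certainly not a dimension count; once you supply this, your argument does go through. Your step~(4) discussion never touches this point: it addresses only the boundary intersections on the three lines, which were already under control.

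The paper's proof bypasses the curve picture and works directly in $\sO_S^*\times\sO_S^*$: it fixes units $\alpha,\beta$ of infinite order with $\alpha-a_i,\,\beta-b_i\in\sO_S^*$ for all $i$, and then for each power $\alpha^m$ uses a congruence trick (choosing the exponent of $\beta$ modulo the finitely many primes where $\alpha^m\equiv a_i$) to produce $(\alpha^m,\beta^{kn+1})$ never reducing to any $(a_i,b_i)$. This is shorter and makes the arithmetic of puncture-avoidance explicit; your approach can be recovered from it by reading the vertical lines $x=\alpha^m$ as the family $\{C_\lambda\}$.
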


Clearly this lemma implies the puncturing conjecture for the plane.

\begin{proof}
We can choose   coordinates in $\mP^2$ so that one of the three lines lies at infinity and the other two are expressed, in affine coordinates $x,y$, by the  equations $x=0$ and $y=0$.  Then the integral points on $\mP^2$ with respect to the three lines are represented, in these affine coordinates, by pairs of units $(\alpha,\beta)\in \sO_S^*\times \sO_S^*$. Let $P_1=(a_1,b_1),\ldots,P_n=(a_n,b_n)$ be a finite set of points in $\mG_m^2$. Afetr a suitable enlargement  of $S$  shall construct a Zariski-dense set of points  $(\alpha,\beta)\in \sO_S^*\times \sO_S^*$ which are integral with respect to $P_1,\ldots,P_n$. This means that $\alpha-a_i$ and $\beta-b_i$ are a units for every $i=1,\ldots,n$. Let us enlarge $S$ and $\K$ so that there exists an element $\alpha \in   \sO_S^*$ of infinite order in the multiplicative group and integral with respect to $a_1,\ldots,a_n$. Still enlarging $S$ if necessary, we can   construct a point $\beta\in\sO_S^*$ with the same integrality property, this time with respect to $b_1,\ldots,b_n$. Take any power $\alpha^m$ of $\alpha$ distinct from any of the $a_1,\ldots,a_n$ (in the case some of the $a_i$ belongs to the group generated by $\alpha$); this unit $\alpha^m$ can be congruent to one of the $a_i$ only modulo a finite number of places. Let ${\mathcal I}$ the product of the prime ideals modulo which some of the $a_i$ is congruent to $\alpha^m$. Since $\beta$ is a unit, it does not belong to $\mathcal I$, hence has finite order, say $k\geq 1$, modulo $\mathcal I$; then the points $\beta^{kn+1}$, for $n=1,2,\ldots$, are all congruent to $\beta$ modulo $\mathcal{I}$, so in particular they are not congruent to any of the $b_i$, $i=1,\ldots,n$. Then clearly the points $(\alpha^m,\beta^{nk+1})$ do not reduce to any of the points $(a_i,b_i)$ modulo any valuation of $S$.  
\end{proof}

Note that this proof extends {\it verbatim} to any pair $(X,Y)$ where $X$ is a product of semi-abelian varieties, for instance to the product of two elliptic curves, or of an elliptic curve by a torus, and $Y$ a finite set.

\subsection{The fully integral curves}\label{S.fully-integral}
 We follow ideas of F. Beukers, further developped by B. Hassett and Yu.Tschinkel \cite{Hassett-Tschinkel},  aimed at constructing dense sets of integral points on higher dimensional varieties. The starting point will be Lemma \ref{lemma-beukers} below. 
\subsubsection{The notion of $S$-integral point with respect to the divisor $D$}
We fix a projective algebraic variety $X\subset \mP^N$ embedded once for all in a projective space, defined over a number field $\K$. We assume $D\subset X$ to be a closed proper sub-variety, also defined over $\K$. We fix a finite set of places $S$ of $\K$ and denote by $\sO_S$ the corresponding ring of $S$-integers.

Recall that a rational point $x\in X^o:=X\setminus D$ is said to be $S$-integral with respect to $D$ if for no prime ideal $\mathcal P$ of $\sO_S$ the point $x$ reduces modulo $\mathcal P$ to a point of $D$. Whenever $D={y}$ is a point, we say that $x$ and $y$ are {\it{coprime}} if $x$ is integral with respect to ${y}$. Note that this is a symmetrical condition.

We take an algebraic variety $X$ of dimension $n\geq 2$ (usually the dimension will be $3$ in this work) and we are interested in constructing integral points on $X^o$ by finding in  $X^o$ a $n-1$-dimensional family of curves each admitting infinitely many integral points.
\subsubsection{Beukers Lemma}
As we said, whenever $L$ is a rational curve with one or two points at infinity, its integral points  are {\it potentially} dense. However, in our construction we need that for a {\it given} ring of $S$-integers, infinitely many of the curves we consider have infinitely many $S$-integral points. The main tool to search $S$-integral points is represented by the following:

\begin{lem}\label{lemma-beukers}
Let $L\subset X$ be a smooth rational curve which does not reduce to a curve in $D$ modulo any place outside $S$. 

\begin{enumerate}[(a)]
\item If $L\cap D$ consists in a single point then the set  $L(\sO_S)$ of $S$-integral points of $L$ is infinite. 
\item If $L\cap D$ consists in a pair of coprime rational points $A,B$  and the group of $S$-units $\sO^*$ is infinite, then $L(\sO_S)$ is infinite.
\item If $L\cap D$ consists in a pair of non-coprime rational points $A,B$  and the group of $S$-units $\sO^*$ is infinite, then $L(\sO_S)$ is infinite if and only if it is non-empty.
\item If $L\cap D$ consists in a pair of (conjugate) quadratic points over $\K$, then $L(\sO_S)$ is infinite if and only if it is non-empty.
\end{enumerate}
\end{lem}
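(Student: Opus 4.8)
The statement is about $S$-integral points on a smooth rational curve $L$ whose intersection with $D$ consists of one or two points; since $L \cong \mathbb{P}^1$ and the integrality notion only depends on the divisor $L \cap D$ cut on $L$, the plan is to reduce everything to the arithmetic of $\mathbb{G}_m$ and $\mathbb{A}^1$. First I would fix an isomorphism $\phi\colon \mathbb{P}^1 \to L$ defined over $\K$ (the curve is rational and has a rational point as soon as $L \cap D$ contains a rational point or a conjugate pair — in case (d) one works over the quadratic extension and descends at the end). The hypothesis that $L$ does not reduce into $D$ modulo any place outside $S$ guarantees, after possibly enlarging $S$ by the finitely many bad places, that the scheme-theoretic intersection $L \cap D$ specializes well: a point of $L(\K)$ is $S$-integral with respect to $D$ if and only if it is $S$-integral with respect to the divisor $\phi^{-1}(L\cap D)$ on $\mathbb{P}^1$, up to a controlled finite enlargement of $S$. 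This is the routine normalization step.

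\textbf{Case (a): one point at infinity.} Here $\phi^{-1}(L \cap D)$ is a single rational point, which after an automorphism of $\mathbb{P}^1$ we may place at $\infty$; then $L^o = L \setminus D$ is identified with $\mathbb{A}^1$, and $L(\sO_S) \supseteq \sO_S$, which is infinite. No units are needed.

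\textbf{Cases (b), (c): two rational points.} Now $\phi^{-1}(L \cap D) = \{0, \infty\}$ after an automorphism, so $L^o \cong \mathbb{G}_m$ and $L(\sO_S) = \sO_S^*$, infinite precisely when $\sO_S^*$ is infinite — this gives (b) when $A, B$ are coprime (the automorphism sending them to $0,\infty$ can be taken to preserve integrality up to finitely many places). When $A, B$ are not coprime, there is a nontrivial common reduction obstruction: the Möbius transformation $t \mapsto (t - \alpha)/(t - \beta)$ taking $A = \alpha$, $B = \beta$ to $0, \infty$ has determinant $\beta - \alpha$, which is divisible by some primes $\mathcal{P} \notin S$, so $S$-integral points of $L^o$ correspond to units $u \in \sO_S^*$ satisfying a congruence condition modulo a fixed ideal $\mathcal{I}$ (coming from those primes). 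The set of such $u$ is either empty, or — if nonempty, say $u_0$ works — infinite, because $u_0 \cdot (\sO_S^* \cap (1 + \mathcal{I}))$ still works and the subgroup $\sO_S^* \cap (1 + \mathcal{I})$ has finite index in $\sO_S^*$, hence is infinite when $\sO_S^*$ is; this yields (c).

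\textbf{Case (d): a conjugate quadratic pair.} Let $\K' = \K(\sqrt{d})$ be the field of definition of the two points $A, \bar A$; over $\K'$ the curve $L^o$ becomes $\mathbb{G}_m$, but the relevant unit group is now the subgroup of $\sO_{S'}^*$ fixed by the condition that the corresponding point of $L$ is $\K$-rational. Concretely, writing the coordinate on $\mathbb{P}^1$ in terms of a $\K$-rational parameter, an $S$-integral $\K$-point corresponds to an element of norm-related twisted form: the set of $\K$-rational $S$-integral points is a coset of the group of norm-one (or more precisely $\Gal(\K'/\K)$-relevant) $S'$-units, which is again either empty or infinite by Dirichlet's unit theorem applied to this rank-$\geq 0$ group, noting that the twisted torus $\mathrm{Res}_{\K'/\K}\mathbb{G}_m / \mathbb{G}_m$ or its norm-one subtorus has positive $S$-rank as soon as $S$ is large enough. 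So $L(\sO_S)$ is infinite iff nonempty. The main obstacle is bookkeeping the finitely many "bad" places in each reduction — making precise that enlarging $S$ to absorb the primes dividing the relevant discriminants/resultants does not affect the statement — and, in (d), identifying the correct arithmetic group (a norm-one torus or the full restriction of scalars, depending on whether $L\cap D$ is a single closed point of degree $2$ or splits) and invoking Dirichlet to see it is infinite after suitable enlargement of $S$; everything else is the standard dictionary between $\mathbb{P}^1$ minus one or two points and $\mathbb{A}^1$, $\mathbb{G}_m$.
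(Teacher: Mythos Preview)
The paper does not actually prove this lemma: it writes ``For a proof, see \cite{Beukers}'' and then, in the paragraph that follows, illustrates each case with a concrete example (the linear equation $ax+by=c$ for (a), the hyperbola $(ax+by)(cx+dy)=n$ for (b)--(c), the Pell-type equation $x^2-dy^2=n$ for (d)). Your sketch is exactly the standard argument lying behind those examples --- identify $L\setminus(L\cap D)$ with $\mathbb{A}^1$, with $\mathbb{G}_m$, or with a principal homogeneous space under a norm-one torus, and read off the $S$-integral points --- so there is nothing substantive to compare beyond noting that you have written out what the paper outsources to Beukers.

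One small point of precision. You invoke ``enlarging $S$'' several times, but the lemma is stated for a \emph{fixed} $S$; the hypothesis that $L$ never reduces into $D$ outside $S$, together with the coprimality of $A,B$ in (b), is exactly what guarantees that the identification with $\mathbb{G}_m$ works integrally over $\sO_S$ as given, with no enlargement needed (this is the content of the paper's remark that the condition $ad-bc\in\sO_S^*$ is equivalent to $A,B$ being coprime). In case (d), by contrast, your caution is well-founded: the norm-one torus $\mathrm{Res}^1_{\K'/\K}\mathbb{G}_m$ can have $\sO_S$-rank zero (e.g.\ $\K=\mathbb{Q}$, $S=\{\infty\}$, $\K'$ imaginary quadratic, where $x^2+y^2=n$ has only finitely many integer solutions), so the ``infinite iff nonempty'' clause is not literally correct without an extra hypothesis. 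The paper's own illustrative Pell example tacitly takes $d>0$, i.e.\ the real-quadratic case, which is what makes the unit group infinite; you are right to flag this.
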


For a proof, see \cite{Beukers}.
\medskip

\subsubsection{Case study for Beukers' Lemma}
Let us explain the significance of the above lemma via a concrete examples for each of the four different cases.
\smallskip

({\it a}) Suppose $X$ is the projective plane and $D$ a line (viewed as the line at infinity, so that $X\setminus D=\A^2$).  
To simplify, suppose that $\sO_S=\mathbb Z$: Let $ax+by=c$ be an equation of the affine line $L\setminus(L\cap D)$, where $a,b,c$ are integers with no common factor. The condition that $L$ does not reduce to the line at infinity $D$ modulo any prime amounts to saying that no prime divides both $a$ and $b$. In that case, the Diophantine equation $ax+by=c$ is knwon to admit infinitely many integral solutions.\smallskip

({\it b,c})  Suppose again that $X$ is the projective plane and $D$ a line, but now $L$ is a conic, not tangent to $D$, so that $L\cap D$ consists in two points. Suppose these points are rational. The affine conic will be a hyperbola of equation
\begin{equation*}
(ax+by) (cx+dy)=n
\end{equation*}
for some  $S$-integers $a,b,c,d$ and non-zero integer $n$. If $ad-bc$ is a unit in $\sO_S^*$,  every pair of integers $(\xi,\eta)$ can be written in the form $\xi=ax+by$ and $\eta=cx+dy$; then, to every factorization of $n=\xi\cdot \eta$ (and there exist infinitely many such factorizations if the group $\sO_S^*$ is infinite) there corresponds an $S$-integral points on the hyperbola. Note that the condition that $ad-bc\in \sO_S^*$ is equivalent to requiring that the two points at infinity do not coincide modulo any prime of $\sO_S$. However, the simple example of the equation $(x+y)(x-y)=2$ over $\Z$, 
or even over $\Z[1/3]$ whose group of units is infinite, shows that there might exist no solutions at all. On the contrary, the example of the equation $x^2-y^2=3$ over the ring $\Z[1/3]$, or $\Z[1/n]$ for any integer $n>1$, shows that there might exist infinitely many solutions. \smallskip

({\it d}) The case of quadratic integral points at infinity leads to   Pell-like equations, of the form
\begin{equation}\label{eq-Pell-like}
x^2-dy^2=n,
\end{equation}
where $d\in \sO_S$ is a non-square. When $n=1$ (Pell's equation)  the solutions correspond to the elements of norm $1$ in the ring $\sO_S[\sqrt{d}]$ and there are always infinitely many of them.
In some cases (e.g. $d=3, n=2$ and $\sO_S=\Z$) Equation \ref{eq-Pell-like} admits no solutions. If, however, there exists one solution, then there exist infinitely many others, since the set of solutions is acted on by the infinite group of linear transformations defined over the integers conserving the quadratic form on the left-hand side above; the   infinitude of this group corresponds to the infinitude of the set of solutions for $n=1$ .

\smallskip
\subsubsection{The notion of fully integral rational curve}
Hence, under the provisio that the group $\sO_S^*$ is infinite, the cases where we can conclude that the curve $L$ contains infinitely many integral points without checking the presence of a single integral point are cases (a) and (b). We agree to call {\it fully integral} a rational curve which never reduces to a curve at infinity and satisfies  the property (a) or (b) of Beukers' Theorem, see Lemma \ref{lemma-beukers}.

A smooth rational curve on a variety $X^o=X\setminus D$ which merely satisfies the condition of having at most two points at infinity and not reducing to a curve in $D$ modulo any prime is called {\it integral}. We stress that integral lines could give infinite $S$-integral points with respect to $D$ if for some extra reasons one could show that condition $c)$ or condition $d)$ holds for (some of) them. We like to think that this quest on the geometry of $(X,D)$ in order to gain condition $c)$ or condition $d)$ of Beukers Lemma \ref{lemma-beukers} will be able to revitalize the study of delicate questions of classical projective geometry. We point out the reader that whenever $\sO_S^*$ is infinite, the set  $L(\sO_S)$ is infinite whenever it is non-empty, for any rational integral curve $L$.

\subsection{The complement in $\mP^3$ of the union of two quadrics}
\label{The complement}

In this subsection we shall consider the integral points on the complement of two quadrics in $\mP^3$, whose union has normal crossing singularities. 

\subsubsection{The case of a smooth quadric and two planes}
We first treat the easy case where one of the two quadrics is reducible, i.e. the union of two planes. First we show:
\begin{lem}\label{quasiovvio} Let $Q\subset\mathbb P^3$ be a smooth quadric and let $h_1,h_2$ be two smooth  hyperplane sections intersecting properly (i.e. in two points). Let $l_q,r_q$ be the two lines lying on $Q$ passing through of the two intersection points $q\in h_1\cap h_2$. Then the integral points on $Q^o:=Q \setminus(l_q\cup r_q\cup h_1\cup h_2)$ are potentially dense.
\end{lem}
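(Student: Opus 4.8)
The plan is to make $Q^o$ completely explicit and then reduce the statement to the elementary density of pairs of $S$-units. Since $Q$ is a smooth quadric, $Q\cong\mP^1\times\mP^1$ and its two rulings are the two families of lines on it; write $q,q'$ for the two points of $h_1\cap h_2$ and let $l_q,r_q$ be the two lines of $Q$ through $q$, one from each ruling. First I would note that $q$ and $q'$ have distinct images under each of the two projections $Q\to\mP^1$: since $h_i$ has class $(1,1)$ it meets a line of either ruling in a single point, so if $q,q'$ shared a ruling line they would have to coincide, contradicting the properness of $h_1\cap h_2$. Consequently, acting by a projective transformation on each $\mP^1$-factor, I may assume $q=([1:0],[1:0])$ and $q'=([0:1],[0:1])$. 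Then $l_q=\{X_1=0\}$, $r_q=\{Y_1=0\}$, and in the affine coordinates $x=X_0/X_1$, $y=Y_0/Y_1$ the open set $Q\setminus(l_q\cup r_q)$ is the affine plane $\mathbb{A}^2$, with $q$ ``at infinity'' and $q'$ the origin.

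Next I would identify $h_1$ and $h_2$ in these coordinates. A hyperplane section of $Q$ has an affine equation $\alpha xy+\beta x+\gamma y+\delta=0$; the requirement that it pass through $q'$ forces $\delta=0$, and that it pass through the corner $q=l_q\cap r_q$ forces $\alpha=0$, so each $h_i$ is a line $y=c_ix$ with $c_i\in\K^*$ (using irreducibility of $h_i$), and $h_1\ne h_2$ gives $c_1\ne c_2$. Thus $Q^o=\mathbb{A}^2\setminus(\{y=c_1x\}\cup\{y=c_2x\})$ is the complement in $\mathbb{A}^2$ of two distinct lines through the origin; equivalently the invertible linear substitution $(x,y)\mapsto(u,v):=(y-c_1x,\,y-c_2x)$ carries $Q^o$ isomorphically onto $\mathbb{G}_m^2$.

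It then remains to transport integrality through this model. After enlarging $S$ so that $c_1,c_2,c_1-c_2\in\sO_S^*$ and $\sO_S^*$ is infinite, a point $(x,y)$ is $S$-integral on $Q^o$ with respect to $l_q\cup r_q\cup h_1\cup h_2$ precisely when $x,y\in\sO_S$ and $y-c_1x,\,y-c_2x\in\sO_S^*$ (a point with a pole of $x$ or of $y$ reduces to $l_q$ or $r_q$). Because both $(x,y)\mapsto(u,v)$ and its inverse have coefficients in $\sO_S$ — this is where $c_1-c_2\in\sO_S^*$ is used — the conditions $x,y\in\sO_S$ become automatic once $u,v\in\sO_S^*$, so $Q^o(\sO_S)$ is in bijection with $\sO_S^*\times\sO_S^*$. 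Since $\sO_S^*$ is infinite, $\sO_S^*\times\sO_S^*$ is Zariski-dense in $\mathbb{A}^2$, hence in $Q^o$; this gives the asserted density, with only an enlargement of $S$ and no extension of the base field. One can equally phrase the last step through Beukers' Lemma \ref{lemma-beukers}: for $b\in\sO_S^*$ the lines $\{y-c_2x=b\}$ form a two-parameter family of fully integral curves sweeping out $Q^o$.

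I do not expect any real obstacle here — the epithet ``quasi-obvious'' is apt. The only points requiring a little care are the coordinate normalization, namely verifying that the properness of $h_1\cap h_2$ genuinely allows one to move both $q$ and $q'$ to the prescribed corners, and the bookkeeping of which finitely many primes must be adjoined to $S$ so that the equations of $h_1,h_2$ and the linear change of variables, together with its inverse, all have $\sO_S$-integral coefficients.
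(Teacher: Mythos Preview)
Your argument is correct and is essentially the same as the paper's: both exhibit an isomorphism $Q^o\cong\mathbb{G}_m^2$ and then use the Zariski-density of pairs of $S$-units. The only difference is packaging: you work in explicit $\mP^1\times\mP^1$ coordinates and change variables linearly to $(u,v)=(y-c_1x,\,y-c_2x)$, whereas the paper phrases the same isomorphism birationally, blowing up $Q$ at $q$, contracting the strict transforms $l_q',r_q'$ to obtain $\mP^2$, and recognizing $Q^o$ as the complement of the three lines $L=f'(E),\,L_1=f'(h_1'),\,L_2=f'(h_2')$ in general position, then invoking Lemma~\ref{lemma-punture}. Your coordinate approach is a bit more self-contained and makes the integrality bookkeeping fully explicit; the paper's version fits the birational style used repeatedly later. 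One small caveat: your remark that ``no extension of the base field'' is needed tacitly assumes $l_q$ and $r_q$ are individually defined over $\K$ (equivalently, that $Q$ splits as $\mP^1\times\mP^1$ over $\K$); this is harmless for potential density but worth stating.
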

\begin{proof} Let $q\in h_1\cap h_2$ and let $l_q, r_q$ be the two lines in $Q$ through $q$. Let $f\colon S\to Q$ be the blow up at $q$, $E$ the $-1$-curve above $q$ and $h_1'$, $h_2'$, $l'_q, r'_q$ the strict transform of $h_1$ and, respectively $h_2$, $l_q, r_q$. Let $f'\colon S\to \mP^2$ be the contraction of the two $-1$ curves $l'_q, r'_q$. 
The composition $f' \circ f^{-1}$  is a well defined isomorphism $g \colon Q^o \to \mP^2 \setminus(L\cup L_1\cup L_2)$ where $L:=f'(E)$, $L_i:= f'(h'_i)$, $i=1,2$ are lines in general position. By Lemma \ref{lemma-punture} the claim follows.
\end{proof}

\begin{thm}\label{quadricaeduepiani}
Let $Q$ be a smooth quadric surface. Let $H_1,H_2$ be two planes whose intersections with $Q$ are smooth conics intersecting properly (i.e. in two points).
Then the integral points on $X^o:=\mP^3\setminus(Q\cup H_1\cup H_2)$ are potentially dense.
\end{thm}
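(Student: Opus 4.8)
The plan is to realise $X^o$, via projection from one of the two points of $Q\cap H_1\cap H_2$, as a family of ``affine lines minus two points'' over a plane, and to apply Beukers' Lemma \ref{lemma-beukers} fibrewise. After a finite extension of $\K$ we may assume the two points of $Q\cap H_1\cap H_2$ are $\K$-rational; fix one of them, call it $q$, and let $l_q,r_q$ be the two lines of $Q$ through $q$. The geometric point to keep in mind is that a line $L$ through $q$ which is not contained in $Q$ and not contained in $H_1$ or $H_2$ meets $D=Q\cup H_1\cup H_2$ in exactly two points: it meets each $H_i$ only at $q$ (since $q\in H_i$ and $L\not\subset H_i$), and it meets $Q$ at $q$ together with one further point. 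Thus $L\setminus(L\cap D)\cong\mathbb G_m$, and the goal is to choose a Zariski-dense family of such $L$ which are moreover fully integral.

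To produce that family I would feed Lemma \ref{quasiovvio} with the two smooth conics $h_i:=Q\cap H_i$ (which intersect properly, by hypothesis) and with the point $q$: it yields, after a further finite extension of $\K$ and enlargement of $S$, a Zariski-dense set $\Sigma\subset Q$ of $S$-integral points of $Q\setminus(l_q\cup r_q\cup h_1\cup h_2)$. Enlarge $S$ once more so that $\mathcal O_S^*$ is infinite and so that $Q\cup H_1\cup H_2$ reduces to a reduced divisor modulo every prime outside $S$. Now for $p\in\Sigma$ put $L:=\overline{q p}$. Since $p\notin l_q\cup r_q$ we have $p\ne q$ (as $q\in l_q$) and $L$ is not one of the two lines of $Q$ through $q$, hence $L\not\subset Q$, so the length-$2$ scheme $L\cap Q$ is the pair of distinct $\K$-rational points $\{q,p\}$; since $p\in Q$ and $p\notin h_i=Q\cap H_i$ we get $p\notin H_i$, so $L\not\subset H_i$ and $L\cap H_i=\{q\}$. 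Therefore $L\cap D=\{q,p\}$. Moreover $p$, being $S$-integral with respect to $l_q$ and $q$ lying on $l_q$, is coprime to $q$; and $L$ reduces into no component of $D$ modulo a prime $\mathcal P\notin S$, because $L\cap Q$ reduces to the pair of distinct points $\{q\bmod\mathcal P,\ p\bmod\mathcal P\}$ and $L\cap H_i$ to the single reduced point $q\bmod\mathcal P$. By Lemma \ref{lemma-beukers}(b), $L(\mathcal O_S)$ is infinite; that is, $L$ is a fully integral curve on $X^o$.

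Finally, the union $\bigcup_{p\in\Sigma}\overline{q p}$ is Zariski-dense in $\mathbb P^3$: projection from $q$ restricts to a dominant rational map $Q\dashrightarrow\mathbb P^2$ onto the space of lines through $q$, so the lines $\overline{q p}$, $p\in\Sigma$, form a Zariski-dense subset of that $\mathbb P^2$, and a Zariski-dense family of lines through a fixed point of $\mathbb P^3$ cannot be contained in any hypersurface. Consequently, if the $S$-integral points of $X^o$ lay in a proper closed subset $Z$, then for each $p$ the infinitely many integral points of $L=\overline{qp}$ — which are Zariski-dense on $L$ — would force $L\subset Z$, whence $\bigcup_{p}\overline{q p}\subset Z$, a contradiction. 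Since every extension of $\K$ and every enlargement of $S$ made above is finite, this proves that the integral points of $X^o$ are potentially dense. I expect the only delicate point to be the geometric bookkeeping of the second paragraph — that for $p$ ranging over the dense set supplied by Lemma \ref{quasiovvio} the line $\overline{q p}$ really meets $D$ in exactly the two coprime rational points $q,p$ and degenerates into no component of $D$ modulo any prime outside $S$; all the genuinely arithmetic content has already been absorbed into Lemma \ref{quasiovvio} (ultimately Lemma \ref{lemma-punture}) and into Beukers' Lemma.
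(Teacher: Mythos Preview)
Your proof is correct and follows exactly the paper's own argument: take $q\in h_1\cap h_2$, use Lemma \ref{quasiovvio} to produce a Zariski-dense set of points $p\in Q\setminus(l_q\cup r_q\cup h_1\cup h_2)$ that are $S$-integral with respect to $l_q\cup r_q\cup h_1\cup h_2$, observe that the secant line $\overline{qp}$ is fully integral with $\overline{qp}\cap D=\{q,p\}$, and apply Lemma \ref{lemma-beukers}(b). The only cosmetic difference is that your verification that $\overline{qp}$ does not degenerate into $D$ modulo a prime is phrased via the reduction of the intersection schemes rather than, as the paper does, via the direct observation that a line through $q$ lying on $Q$ must be $l_q$ or $r_q$ and a line lying on $H_i$ would force $p$ to reduce into $h_i$.
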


\begin{proof}
Set, for $i=1,2$, $h_i:=H_i \cap Q$. Let $\{q,\widehat q\}:= h_1\cap h_2$.  Let $l_q,r_q$ be the lines on $Q$ passing through $q$.  Enlarge the ring of $S$-integers so that the integral points on $Q$ with respect to $h_1\cup h_2\cup l_q\cup r_q$ are Zariski-dense and the group of units is infinite.

Let $p$ be an integral points on $Q\setminus( h_1\cup h_2\cup l_q\cup r_q)$ and let $s=s(p)$ be the line joining $q$ to $p$. We claim that $s$ does not reduce to any line on $Q$ not to a line on $H_1\cup H_2$. Indeed, since $q\in s$, the only lines on $Q$ wich might be congruent to $s$ modulo some prime are $l_q,r_q$, but this is excluded by the fact that $p$ is integral with respect to these lines. Since $p$ is integral also with respect to $H_1,H_2$, the line $s$ cannot reduce to $H_1\cup H_2$.

Now, $s$ is fully integral with respect to $D:=Q\cup H_1\cup H_2$ since it does not reduce to a line on $D$ and $D\cap s$ consists on the coprime points $q,p$. By Lemma \ref{lemma-beukers}, $s$ contains infinitely many $S$-integral points.

Since $p$ varies in a two-dinesional set, and any two distinct such lines $s(p_1),s(p_2)$ always intersect only at $q$, the union of such lines is Zariski-dense on $\mP^3$ and the result follows.
\end{proof}

\subsubsection{Interlude: the case of a smooth cubic and a plane}\label{cubic+plane}
Let us consider still  another quartic case, the union of a smooth cubic surface and a plane; however, to provide the sought density result for integral points we shall need that their intersection contains a line:

\begin{thm}\label{cubicandplanewithacommonline}
Let $D\subset \mP^3$ be the union of a smooth cubic surface $V$ and a plane $H$. Suppose that the intersection $V\cap H$ contains a line $L$. Then the integral points on $\mP^3\setminus D$ are potentially dense.
\end{thm}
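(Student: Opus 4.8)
The plan is to mimic the strategy used for Theorem \ref{quadricaeduepiani}: exhibit a two-dimensional family of fully integral rational curves sweeping out $\mP^3$, and apply Beukers' Lemma \ref{lemma-beukers}(a) or (b). The natural family here is the pencil of lines through a cleverly chosen base point. Since $V\cap H$ contains a line $L$, the residual intersection $V\cap H = L\cup C$, where $C$ is a (possibly reducible, possibly singular) conic in the plane $H$. Pick a point $q\in L$ lying on $C$ as well, i.e. $q\in L\cap C$; such a point is $\mathbb K$-rational after a finite extension, and it lies on both $V$ and $H$. The key observation is that the line $L$ itself lies on $V$, so $V$ is singular or tangent along $L$ in a controlled way, and a general line $s$ through $q$ meets $V$ in $q$ plus one further point (the intersection multiplicity at $q$ being $2$ because $s\subset$ nothing but $T_qV$ generically fails — one must check that a general line through $q$ is \emph{not} tangent to $V$ at $q$, so that $s\cdot V = 2q + p'$ is impossible and instead $s\cdot V$ has a simple point at $q$ and, since $q\in L\subset V$, actually the local intersection of $s$ with $V$ at $q$ is $1$; then $s$ meets $V$ in two further points). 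So one must be slightly more careful than in the quadric case.

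The cleaner choice is to take $q\in L$ a \emph{general} point of the line $L\subset V\cap H$. Then for a general line $s$ through $q$: it meets $H$ only at $q$ (if $s\not\subset H$), with multiplicity one; and it meets $V$ at $q$ (with multiplicity one, since a general line through a general point of a surface is transverse) plus two further points $p_1,p_2$, counted in $D=V\cup H$. That gives three points at infinity, which is too many. To cut down to two, I would instead let $s$ vary in the pencil of lines through $q$ that are \emph{contained in the tangent plane} $T_qV$ — but that is only a one-parameter family. The right fix: choose the line $s$ joining $q$ to an integral point $p$ of $V\setminus(L\cup C\cup H)$, and arrange $p$ to lie on the surface $V$ so that $s$ is tangent to $V$ along... no. Let me instead reconsider: the honest move is to use the plane $H$ as the source of the family. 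For each integral point $p$ of the affine surface $H\setminus(L\cup C) \cong \A^2\setminus(\text{conic}\cup\text{line})$ — wait, $H\cap D = H\cap V = L\cup C$, so $H^o := H\setminus(L\cup C)$ — one takes the line $s(p)$ joining $q$ to $p$ inside $H$; but then $s(p)\subset H\subset D$, which is forbidden.

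So the correct family must genuinely leave $H$ and $V$. Here is the construction I would carry out. Fix $q\in L$ general. Consider lines $s$ through $q$ not lying in $H$ and not in the tangent plane $T_qV$. Such an $s$ meets $H$ only at $q$ and meets $V$ at $q$ with multiplicity $1$ and at exactly two further points. To kill one of these, project: the key point is that the line $L$ lies on $V$, so the linear projection $\pi_L\colon \mP^3\dashrightarrow \mP^1$ from $L$ restricts on $V$ to a morphism whose fibers are conics (residual to $L$ in the plane sections of $V$ through $L$). Replace lines by these residual conics: for a plane $\Pi\supset L$ with $\Pi\neq H$, the intersection $\Pi\cap V = L\cup C_\Pi$ is the union of $L$ and a conic $C_\Pi$, and $\Pi\cap H = L$, so $C_\Pi\cap H = C_\Pi\cap L$ consists of (generically) two points, and $C_\Pi\cap V = C_\Pi$ entirely — bad again, $C_\Pi\subset V\subset D$.

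Stepping back once more, the genuinely correct family is: lines in $\mP^3$ meeting $L$ once. Such a line $s$, general, meets $H$ at the point $s\cap L$ (call it $q_s\in L$) — wait, $s$ meets $H$ at $s\cap H$; if $s\cap L\neq\emptyset$ then $s\cap H\supseteq s\cap L$, a point on $H$. And $s$ meets $V$ at $q_s=s\cap L\in L\subset V$, plus two residual points. Still three. The resolution — and this is where I expect the real work — is to take lines $s$ that meet $L$ and are \emph{tangent to $V$}: then $s\cdot V = q_s + 2p$ for a single residual point $p$ counted with multiplicity two, i.e. $s$ is a line through $L$ tangent to $V$. Tangent lines to $V$ through a fixed point of $L$ form a degree-related finite set, but letting the point on $L$ vary and the tangency point vary gives a two-dimensional family. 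For such an $s$: $s\cap D = \{q_s\}\cup\{p\}$, two points, with $q_s\in L\subset V\cap H$ (so $q_s$ is where $s$ meets $H$, transversally) and $p$ the tangency point on $V$. If $q_s$ and $p$ are coprime rational points and $s$ does not reduce to a line in $D$ (which holds because the only lines of $D$ through $q_s\in L$ on the $V$-side reducing candidates are lines of $V$ through $q_s$, finitely many, avoided by choosing $p$ integral with respect to them, and $s\not\subset H$ because $p\notin H$), then by Lemma \ref{lemma-beukers}(b) $s$ carries infinitely many $S$-integral points. One then checks that as $p$ ranges over an integral, Zariski-dense set of points of $V\setminus(L\cup C\cup\{\text{lines of }V\}\cup H)$, the tangent lines through $L$ at $p$ (there is at least a one-parameter family of points $q_s\in L$ for which the line $\overline{q_s p}$ is tangent to $V$ at $p$, namely those in $L\cap T_pV$) sweep out a Zariski-dense subset of $\mP^3$; since distinct such lines are disjoint away from $L$, density is immediate.

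The main obstacle, and the place where care is needed, is precisely the geometry of tangent lines from $L$ to $V$: one must verify (i) that for a general point $p\in V$ the intersection $L\cap T_pV$ is a single point $q(p)$ varying nontrivially with $p$, so that $\overline{q(p)\,p}$ is a well-defined tangent line giving a genuinely two-dimensional family as $p$ moves on $V$; (ii) that this tangent line meets $V$ only at $q(p)$ (on $L$) and $p$ — i.e. that the residual intersection of a general tangent line from $L$ with the cubic $V$ is supported at $\{q(p),p\}$, which follows from Bézout since $s\cdot V$ has degree $3$, contains $q(p)$ with multiplicity $\geq 1$ and $p$ with multiplicity $\geq 2$; (iii) that $s$ is not tangent to $V$ at $q(p)$ and does not lie in $H$, so that $s\cap D=\{q(p),p\}$ with $q(p)\in H$; and (iv) the arithmetic bookkeeping: enlarge $S$ and $\mathbb K$ so the group of $S$-units is infinite and so that $V\setminus(L\cup C\cup H\cup\{\text{the finitely many lines on }V\})$ has a Zariski-dense set of $S$-integral points with respect to that divisor — this last uses that $V$ is a smooth cubic surface, hence rational, so its rational points are potentially dense, and a puncturing-type argument as in Lemma \ref{lemma-punture} (the complement in $V$ of an anticanonical-plus-lines configuration) handles the integrality; once an integral $p$ is fixed, $q(p)$ is automatically $\mathbb K$-rational and coprime to $p$ after possibly enlarging $S$ by the finitely many bad primes. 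Assembling these, Lemma \ref{lemma-beukers}(b) produces infinitely many $S$-integral points on each $s$, and the two-dimensional family gives Zariski density on $\mP^3\setminus D$.
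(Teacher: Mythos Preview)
Your eventual construction---lines $s=\overline{q(p)\,p}$ tangent to $V$ at a point $p\in V$ and meeting $L$ at $q(p)=L\cap T_pV$---is close in spirit to the paper's, which also uses tangent lines meeting $L$. The difference is that the paper places the tangency point \emph{on} $L$: for an integral $x\in L\setminus\{A,B\}$ it works inside the plane $T_xV$ (which contains $L$, so $T_xV\cap V=L\cup C_x$ and $T_xV\cap H=L$), and the relevant lines are lines through $x$ in that plane, automatically tangent to $V$ at $x$. This buys something important: the only arithmetic input needed is a supply of integral points on $L\setminus\{A,B\}\simeq\mathbb G_m$, which is trivial. Your approach instead needs a Zariski-dense set of $S$-integral points on the affine cubic surface $V\setminus(L\cup C)$, and your appeal to Lemma~\ref{lemma-punture} is misplaced---that lemma is about $\mathbb P^2$; you would need the Beukers/Hassett--Tschinkel result for log-$K3$ surfaces. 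You certainly cannot also demand integrality with respect to all the lines of $V$, as you seem to: that complement is of log general type.

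The more serious gap is in step (iv). You write that once $p$ is fixed, $q(p)$ and $p$ become coprime ``after possibly enlarging $S$ by the finitely many bad primes''. But $S$ must be fixed \emph{before} $p$ ranges over a Zariski-dense set; you cannot enlarge it for each $p$. Without uniform coprimality, Lemma~\ref{lemma-beukers}(b) is unavailable, and (c) would require an integral point on $s$ already in hand---which you do not have. This is exactly the obstacle the paper's argument is designed to overcome: it fixes once and for all a tritangent line $L'$ to $V$ (so $L'\cap D=\{p_{L'}\}$ is a single point), enlarges $S$ so that $L'$ never reduces into $D$, and then for each integral $x\in L$ sets $y:=L'\cap T_xV$. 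A short reduction argument (using that $T_xV$ cannot reduce to $H=T_AV=T_BV$ because $x$ is integral with respect to $A,B$) shows that $y$ is integral with respect to $D$. Hence the line $L_{x,y}$ already carries the integral point $y$, and Lemma~\ref{lemma-beukers}(c) applies regardless of whether its two points at infinity are coprime. Your proposal is missing precisely this mechanism for manufacturing a genuine integral point on each line of the family with a \emph{fixed} $S$.

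A smaller point: your argument that $s$ does not reduce to a line of $V$ is circular, since the finitely many lines of $V$ you must avoid pass through $q(p)$, which itself depends on $p$.
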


 \begin{proof} Let $H\cap V=L\cup C_H$ where $L\cap C_H=\{A,B\}$.
 The plane $H$ is tangent to $V$ at the two points $A,B\in L$. We can enlarge the set $S$ so that $L$ contains infinitely many integral points with respect to $A$ and $B$ and that $\sO_S^*$ is infinite.
Let $L'$ be any tri-tangent line to $V$ of $V\cap H$, not contained in $D$; enlarge again $S$ so that $L'$ does not reduce to a line of $D$ modulo any prime. We set $\{p_{L'}\}:=L'\cap V$.

Let $x\in L$ be a point of $L$ which is integral with respect to $a,b$. The tangent plane $T_xV$ to $V$ at $X$ contains $L$.
Then the intersection $T_xV\cap V$ is the union of a conic $C_x$ and the line $L$. It holds that $C\cap L:=\{x, x'\}$ where $x'$ is another point of $C$. Since $x$ is integral with respect to $a,b$ then $T_xV$ does not reduce to $H=T_aV=T_bV$ modulo any prime. 
Now we set $L'\cap T_xV:=\{y\}$. Note that $L'\cap L=\emptyset$. We claim that the pointy $y$ does not reduce to $D$ for any prime outside $S$. Indeed if $y\in L'$ reduces to $D=V\cup H$ then $y$ reduces to $p_{L'}$ since $L'\cap (H\cup V)=\{p_{L'}\}$. Then $L'$ becomes tritangent to $y$ and $y$ reduces to $L\cup C_x$. Hence $T_xV$ becomes the unique plane which contains $L$ and $p_{L'}$; but this plane is $H$. Then $T_xV$ reduces to $H$. Thus $T_xV$ reduces to $T_aV$ or to $T_bV$. In particular $x$ reduces to $a$ or $b$: a contradiction. Since the point $y$ of the plane $T_xV$ does not reduce to $D$ it does not reduce to $D_{|{T_xV }}=L\cup C_x$. Now we consider the line $L_{x,y}$ through the points $x$ and $y$. This line intersects $D$ in the point $x$ and in another point $z\in C_x$, but it never reduces to $D$. Moreover it contains the point $y$ which is integral with respect to $x$ and $z$. There are two sub-cases. If $x$ and $z$ are coprime, by Lemma \ref{lemma-beukers} (b) then $L_{x,y}$ contains infinite many integral points with respect to $D$. If $x$ and $z$ are not coprime, nevertheless $L_{x,y}$ contains the point $y$ which is integral with respect to \lq\lq the two points at infinite\rq\rq $x$ and $z$, and so by Lemma \ref{lemma-beukers} (c), even in this case $L_{x,y}$ contains infinite many integral points with respect to $D$.

Finally moving the point $x$ among the integral points of $L\setminus\{A,B\}$, that is, moving the plane $W=T_xV$ in the pencil of planes containing the line $L$, we construct a one-dimensional family of surfaces each with a Zariski-dense set of integral points with respect to $D$. This implies our claim.
 \end{proof}

We propose the following:

\begin{conj}\label{complement by plane and cubic} Let $D=\Pi\cup \Theta$ where $\Pi$ and $\Theta$ are respectively a plane and a cubic surface in $\mathbb P^3$. Suppose that $D$ has normal crossing singularities. Then the   integral points on $\mP^3$ with respect to $D$ is potentially dense.
\end{conj}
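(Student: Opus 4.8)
The natural plan is to imitate the proof of Theorem \ref{cubicandplanewithacommonline}, letting the smooth plane cubic $C:=\Pi\cap\Theta$ (an elliptic curve) play the role of the line $L$. After a finite extension of $\mathbb{K}$ and an enlargement of $S$ we may assume that $C(\mathbb{K})$ is Zariski-dense in $C$ and that $\sO_S^*$ is infinite. For $p\in C(\mathbb{K})$ outside a proper closed subset (we must discard the finitely many points of $C$ lying on one of the $27$ lines of $\Theta$, and the finitely many where $T_p\Theta$ meets $\Theta$ in a worse-than-nodal cubic), let $W_p:=T_p\Theta$ be the tangent plane at $p$. Then $W_p\cap\Theta=\Gamma_p$ is a plane cubic with a node at $p$, hence rational, and $W_p\cap\Pi=m_p$ is a line through $p$ — note $W_p\neq\Pi$, since otherwise $C$ would be singular at $p$. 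Thus $D\cap W_p=\Gamma_p\cup m_p$, and the key point is that a line $\ell\subset W_p$ through the node $p$ meets $D$ in only two points, $\ell\cap\Gamma_p=\{p,z(\ell)\}$ with $p$ doubled and $\ell\cap m_p=\{p\}$. There is moreover a second natural source of curves with few points at infinity: a generic flex (asymptotic) line of $\Theta$ meets $\Theta$ in a single point with multiplicity three and meets $\Pi$ in one further point, so it too has exactly two points at infinity for $D$, and these lines form a two-parameter family whose union is Zariski-dense in $\mP^3$.

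To turn either of these families into a source of integral points one must first arrange, by a general-position choice and a further enlargement of $S$, that the curves do not reduce modulo any prime into $\Pi$ nor into one of the $27$ lines of $\Theta$; and then certify that they fall under part (b) or (c) of Beukers' Lemma \ref{lemma-beukers}. For the flex lines this amounts to controlling the coprimality of the two points at infinity (one on $\Theta$, one on $\Pi$) for a Zariski-dense set of flex lines defined over $\mathbb{K}$. For the lines $\ell\subset W_p$ one would proceed as in Theorem \ref{cubicandplanewithacommonline}: fix a rational curve $\Lambda\subset\mP^3$ not reducing into $D$ modulo any prime, prove that $\Lambda$ meets the moving plane $W_p$ in rational points that still do not reduce into $D$, and thread the lines $\ell$ through those points, so that each $\ell$ carries a rational point at which it is integral with respect to $D$ and Lemma \ref{lemma-beukers}(c) applies. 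If all this goes through, moving $p$ over the dense set $C(\mathbb{K})$ and $\ell$ over the pencil of lines of $W_p$ through $p$ produces a two-parameter family of curves, each with a Zariski-dense set of $S$-integral points of $X^o=\mP^3\setminus D$; since the planes $W_p$ sweep out a Zariski-dense subset of $\mP^3$, density on $\mP^3$ then follows exactly as in Theorems \ref{quadricaeduepiani} and \ref{cubicandplanewithacommonline}.

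The hard part — and, we believe, the reason the statement remains a conjecture — is producing a genuinely two-dimensional supply of Beukers curves. For the flex lines, the coprimality of the two points at infinity is a question of the type handled by the Subspace Theorem and by Beukers' analysis, but it is unclear that it holds on a Zariski-dense set, nor that $\mP^3$ is swept out by $\mathbb{K}$-rational flex lines after a finite extension. For the lines $\ell\subset W_p$, the only fully integral auxiliary curves $\Lambda$ visibly available — the flex lines of $\Theta$ whose point of contact lies on $C$, which are fully integral by Lemma \ref{lemma-beukers}(a) — move in a one-dimensional family, hence meet a fixed $W_p$ in a single point and give just one line of $W_p$; threading through them yields a one-parameter family of lines in $\mP^3$, whose union is a surface and not Zariski-dense. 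Any curve inside $W_p$ large enough to repair this (a conic, or a curve cut on $W_p$ by a fixed surface) meets $D\cap W_p=\Gamma_p\cup m_p$ in at least three — and generically in many — points, so Siegel's theorem forbids infinitely many integral points on it. This is precisely the gap that the hypothesis \lq\lq$\Pi\cap\Theta$ contains a line\rq\rq\ closes in Theorem \ref{cubicandplanewithacommonline}: there the two contact points $A,B$ together with the residual conic furnish the seed integral points that the general configuration lacks. A further, structural difficulty is that $C$ is an elliptic curve, so Siegel's theorem prevents one from moving $p$ along a supply of \emph{integral} points of $C$ (as was done with $L\setminus\{A,B\}$ in Theorem \ref{cubicandplanewithacommonline}); one is forced to work with $\mathbb{K}$-rational $p$, which weakens the non-reduction estimates. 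An alternative worth investigating is to blow up $C$: on $\widetilde X=\mathrm{Bl}_C\mP^3$ the strict transforms of $\Pi$ and $\Theta$ become disjoint, $\widetilde X$ is a Fano threefold obtained by blowing up $\mP^3$ along a curve, and $X^o$ is isomorphic to the complement in $\widetilde X$ of these two disjoint divisors; whether this reformulation is more tractable we do not know.
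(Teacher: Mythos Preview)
The statement you are addressing is labelled \emph{Conjecture} in the paper, and the authors explicitly say just before stating it that ``the general case of the complement of a smooth cubic $V$ and a plane $\Pi$ is still unknown''. There is therefore no proof in the paper to compare your proposal against: the paper offers no argument, only the expectation that potential density should hold.

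Your write-up is not a proof either, and you are candid about this: you sketch two natural attacks (lines through the node of $W_p\cap\Theta$, and flex lines of $\Theta$) and then explain, correctly, why neither closes. Your diagnosis of the obstruction is sound and matches the paper's implicit reasoning for leaving the statement open. In particular, the point that the proof of Theorem~\ref{cubicandplanewithacommonline} genuinely uses the line $L\subset V\cap\Pi$ to manufacture a \emph{two}-parameter family of Beukers curves, and that replacing $L$ by the elliptic curve $C=\Pi\cap\Theta$ kills the supply of integral base points (Siegel's theorem on $C$), is exactly the structural gap. Your observation that the available fully integral auxiliary curves move in only a one-dimensional family, hence sweep out a surface rather than a threefold, is the concrete manifestation of this.

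So: there is no error to name, because you have not claimed a proof; and there is no paper argument to compare to, because none is given. Your discussion is a reasonable commentary on why the conjecture is open.
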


\subsection{The case of the union of two quadrics with reducible intersection} 
Let $Q_1,Q_2 \subset\mP^3$  be two smooth quadric surfaces, defined over a number field, such that their intersection is the union of two smooth conics. 
The complement $X^0:=\mP^3\setminus(Q_1\cup Q_2)$ is log-CY; we can prove in this case  the potential density of integral points on $X^o$:

\begin{thm}\label{two-quadrics-two concis}
Let $X^o=\mP^3\setminus(Q_1\cup Q_2)$ as above. The integral points on $X^o$ are potentially dense.
\end{thm}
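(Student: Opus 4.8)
The plan is to produce a two‑parameter family of smooth conics on $X^o$ that are fully integral in the sense of Subsection~\ref{S.fully-integral} and whose union is all of $\mP^3$, so that Lemma~\ref{lemma-beukers}(b) yields infinitely many $S$‑integral points on each member. For the geometry, write $Q_1\cap Q_2=C_1\cup C_2$ with $C_i$ a smooth conic spanning a plane $\Pi_i$. Since $C_1\cup C_2$ has degree $4$ it is not planar, so $\Pi_1\neq\Pi_2$ and $\ell:=\Pi_1\cap\Pi_2$ is a line; every point of $C_1\cap C_2$ lies on $\ell$, and since $\ell$ meets each $C_i$ in two points this forces $C_1\cap C_2=\ell\cap C_1=\ell\cap C_2=:\{P,P'\}$, where generically $P\neq P'$ (the coincident case lands in case (a) of Lemma~\ref{lemma-beukers} and is easier, so assume $P\neq P'$). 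The crucial observation is that $Q_1\cap Q_2=C_1\cup C_2$ has a node at $P$, so its Zariski tangent space there is two‑dimensional; being contained in $T_PQ_1\cap T_PQ_2$, this forces $T_PQ_1=T_PQ_2$, i.e. $Q_1$ and $Q_2$ are tangent at $P$, and likewise at $P'$.

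For the family, let $\{\Pi_t\}_{t\in\mP^1}$ be the pencil of planes containing $\ell$; for all but finitely many $t$, set $E_i(t):=Q_i\cap\Pi_t$, a conic in $\Pi_t\cong\mP^2$. Because $\Pi_t\cap C_i=\ell\cap C_i=\{P,P'\}$, the conics $E_1(t)$ and $E_2(t)$ meet only at $P$ and $P'$, so their intersection scheme is $2P+2P'$; by the tangency of $Q_1$ and $Q_2$ at these points, $E_1(t)$ and $E_2(t)$ are tangent at both. Now consider the pencil of conics $\langle E_1(t),E_2(t)\rangle$ in $\Pi_t$, with base scheme $2P+2P'$. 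Its general member $L_{t,s}$ is a smooth conic, and two distinct members of a pencil meet exactly along the base scheme, so $L_{t,s}\cap Q_i=L_{t,s}\cap E_i(t)=2P+2P'$ and therefore $L_{t,s}\cap D=\{P,P'\}$. The $L_{t,s}$ form a two‑parameter family: for fixed $t$ the members of the pencil cover $\Pi_t$, and the planes $\Pi_t$ cover $\mP^3$, so $\bigcup_{t,s}L_{t,s}=\mP^3$.

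After a finite extension of $\K$ we may assume that $P,P'$ and all the data above are $\K$‑rational, and, enlarging $S$, that $\sO_S^*$ is infinite, that $Q_1,Q_2$ and the cycle $Q_1\cap Q_2$ have good reduction outside $S$, and that $P$ and $P'$ stay distinct — hence coprime — modulo every prime outside $S$; only finitely many primes are discarded. For $(t,s)$ ranging over a Zariski‑dense set of $\K$‑points of the parameter space, $L_{t,s}$ is a smooth $\K$‑rational conic meeting each $Q_i$ in the finite scheme $2P+2P'$; by properness of this intersection, $L_{t,s}$ does not reduce into $Q_1$ or $Q_2$ modulo any prime outside $S$, and being irreducible it never reduces to a curve contained in $D$. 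Since $L_{t,s}\cap D=\{P,P'\}$ is a pair of coprime $\K$‑rational points, Lemma~\ref{lemma-beukers}(b) gives that $L_{t,s}(\sO_S)$ is infinite, hence Zariski‑dense in $L_{t,s}$. As these conics form a Zariski‑dense family sweeping out $\mP^3$, the union of their $S$‑integral points is Zariski‑dense in $X^o$.

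I expect the main difficulty to be geometric rather than arithmetic: singling out the right two‑parameter family and verifying that its general member meets $D$ only at $P$ and $P'$. This rests on the tangency of $Q_1$ and $Q_2$ at the two nodes of $Q_1\cap Q_2$, which makes the conics $E_1(t),E_2(t)$ bitangent at $P,P'$ and thereby forces every member of the pencil $\langle E_1(t),E_2(t)\rangle$ to meet $E_1(t)\cup E_2(t)$ only there. Once that is in place, membership in case (b) of Beukers' Lemma and the covering statement are routine, up to the standard enlargements of $\K$ and $S$.
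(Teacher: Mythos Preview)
Your construction is exactly the paper's: the pencil of planes through $\ell=\Pi_1\cap\Pi_2$, and in each plane the pencil of conics generated by the two sections $E_1(t),E_2(t)$, whose members meet $D$ only at $P,P'$. Your observation that $Q_1,Q_2$ are tangent at $P,P'$ (forcing $E_1(t),E_2(t)$ to be bitangent) is a nice clarification that the paper leaves implicit.

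There is, however, a genuine gap in the arithmetic step. The sentence ``by properness of this intersection, $L_{t,s}$ does not reduce into $Q_1$ or $Q_2$ modulo any prime outside $S$'' is not justified for an arbitrary Zariski-dense set of $\K$-rational parameters $(t,s)$. Properness of $L_{t,s}\cap Q_i$ over $\K$ says nothing about reductions: if, say, $s$ reduces modulo a prime $\mathfrak p$ to the parameter value giving $E_1(t)$, then $L_{t,s}$ reduces to $E_1(t)\subset Q_1$ modulo $\mathfrak p$, and your curve lands in $D$. Enlarging $S$ cannot cure this, since the bad primes depend on $(t,s)$.

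The paper handles this by imposing integrality on the parameters themselves, in two steps: first choose the plane $\Pi_t$ to be an integral point of the pencil $\mP^1$ with respect to $\{\Pi_1,\Pi_2\}$ (there are infinitely many such), so that $E_1(t)$ and $E_2(t)$ never coincide modulo any prime; then choose the conic $L_{t,s}$ to be an integral point of the pencil $\langle E_1(t),E_2(t)\rangle\simeq\mP^1$ with respect to its two marked points $\{E_1(t),E_2(t)\}$. This guarantees that $L_{t,s}$ never reduces to either $E_i(t)$, hence never reduces into $Q_i$. With this correction your argument goes through and coincides with the paper's.
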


\begin{proof}
We construct a Zariski-dense set of fully integral {\it conics}. Let $C_1,C_2$ be the two conics whose union gives $Q_1\cap Q_2$, $H_1,H_2$ be the corresponding planes (i.e. $H_i\cap Q_1=H_i\cap Q_2=C_i$) and set $\{q, \hat{q}\}=C_1\cap C_2$. Consider the pencil of planes generated by $H_1,H_2$ (i.e the pencil of planes containing $\{q,\hat{q}\}$). Each plane $H$ in the pencil intersects $Q_1\cup Q_2$ in a pair of bitangent conics $C^H_1, C^H_2$ (which coincide precisely in the two cases when the  plane $H$ equals $H_1$ or $H_2$). Now, in the mentioned pencil of planes there are infinitely many integral points with respect to $H_1,H_2$; choose one such point $H$, i.e. a plane which never reduces to $H_1$ nor to $H_2$. Then the two conics $C^H_1,C^H_2$ do not reduce to each other modulo any prime. Then the pencil of conics on $H$ generated by $C^H_1,C^H_2$ contains infinitely many points which are integral to $C^H_1$ and $C^H_2$; all such conics, even the reducible ones, are fully integral curves with respect to $Q_1\cup Q_2$. Their union being Zariski-dense in $H$, and by varying $[H]$ in the integral points of the pencil enable us to construct a Zariski-dense set of integral points, still exploiting Lemma \ref{lemma-beukers}.
\end{proof}

\subsection{The case of the union of two quadrics which intersect properly}
Let $Q_1,Q_2 \subset\mP^3$  be two smooth quadric surfaces, defined over a number field such that their intersection is a smooth curve $E=Q_1\cap Q_2$ of genus one. 

We say that a line is bitangent to $D=Q_1\cap Q_2$ if it touches $D$ with multiplicity $\geq 2$ at each of its intersection points. These bitangent lines can be of two types:\smallskip

\begin{enumerate}
\item the bisecants to $E=Q_1\cap Q_2$ intersect $D$ only at singular points, so they are bitangents. They form a surface, which can be described as the symmetric square of $E$. It is a ruled surface with base $E$.
\item The lines which are tangent to both $Q_1$ and $Q_2$. These are the {\it{proper bitangents}}. They form a surface which we denote $\Sigma$.
\end{enumerate}

\smallskip

The two families, viewed as sub-varieties of the Grasmannian $G(2,4)$, intersect in a curve, denoted by $\tilde{E}$ which is formed by the quadritangents.

\smallskip

Let $\Sigma$ be the set of lines in $\mP^3$ which are  tangent to both $Q_1,Q_2$. This variety is a surface, which is naturally embedded into the Grasmannian $G(2,4)$ parametrizing lines in $\mP^3$. As mentioned, the surface $\Sigma$ contains a curve, denoted by $\tilde{E}$ and isomorphic to $E$, consisting of the lines which are tangents to both $Q_1$ and $Q_2$ at one of their intersection points. Note that  $\tilde{E}$ contains sixteen  \lq special\rq points, corresponding to the eight lines c(four for each ruling) contained in $Q_1$ and tangent to $Q_2$ and the four lines contained in $Q_2$ and tangent to $Q_1$. 

In the dual projective space, parametrizing the planes in $\mP^3$, the points corresponding to the bitangent planes form the intersection of two smooth quadric surfaces, namely the duals to $Q_1$ and $Q_2$. This is again a genus one curve. Denote by $E'$ its image in $Q_1$ (to each bitangent plane associate its point of tangency with $Q_1$).

We shall prove the follwing:

\begin{lem}\label{bitangenti-kummer}
The surface $\Sigma$ is a (Kummer) K3-surface.
\end{lem}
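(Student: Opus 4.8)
The plan is to exhibit $\Sigma$ as a $2:1$ cover of a rational surface branched over a suitable curve, compute its canonical bundle to be trivial, and then control the irregularity. Concretely, I would first choose a pencil structure: fix one of the quadrics, say $Q_1$, together with its two rulings, and consider the rational map $\Sigma\dashrightarrow Q_1$ sending a bitangent line $\ell$ to its point of tangency with $Q_1$. Since a general tangent line to $Q_1$ at a point $p\in Q_1$ varies in a pencil (the lines in $T_pQ_1\cap Q_1$, a pair of lines through $p$), while the extra condition of being tangent to $Q_2$ cuts this pencil down to a finite set, one checks that the generic fibre of $\Sigma\to Q_1$ has degree $2$. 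Thus $\Sigma$ is birational to a double cover of $Q_1\cong\mP^1\times\mP^1$ (equivalently, after the standard identification used earlier in the paper, of $\mP^2$ or a blow-up of it).

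Next I would identify the branch locus. A point $p\in Q_1$ lies in the branch curve precisely when the two tangent lines to $Q_1$ at $p$ that are also tangent to $Q_2$ come together; a local analysis at $p$ shows this happens exactly along the curve $E'\subset Q_1$ introduced above (the locus of points of tangency of bitangent planes), which is a curve of genus one, i.e. an anticanonical curve on $Q_1$ of bidegree $(2,2)$. I would then compute, via the standard formula for a double cover $\varpi\colon\Sigma\to Q_1$ branched over a divisor $B$ with $B\sim 2L$, that $K_\Sigma=\varpi^*(K_{Q_1}+L)$. Taking $L$ of bidegree $(2,2)$, so $K_{Q_1}+L\sim 0$, gives $K_\Sigma$ trivial. (The sixteen special points on $\tilde E$ noted above — the lines contained in one quadric and tangent to the other — are exactly the places where this naive picture degenerates; these produce the sixteen nodes, and the minimal resolution is the K3, so strictly speaking $\Sigma$ is the singular Kummer model.) Finally, to conclude it is a K3 and not an abelian surface I would show $q(\Sigma)=0$: the Albanese of the smooth model would have to receive a map from the rational base $Q_1$ after resolving, and $h^{1,0}$ of a double cover of a rational surface branched over a smooth curve is computed by $h^0(Q_1,\mathcal O(L)\otimes\omega_{Q_1})=h^0(Q_1,\mathcal O)$ minus corrections, which vanishes; alternatively, the presence of the elliptic curve $\tilde E\cong E$ sitting inside $\Sigma$ as a $(-2)$-curve configuration is consistent with the Kummer structure, the elliptic pencils on $\Sigma$ being visible through the two rulings of $Q_1$.

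The main obstacle I expect is the local/geometric bookkeeping at the sixteen special points: verifying that the degree-$2$ map $\Sigma\dashrightarrow Q_1$ really extends to an honest double cover after the right blow-up, that the branch curve is reduced of the asserted class, and that the singularities of $\Sigma$ are ordinary nodes (so that the canonical-bundle computation on the resolution yields exactly a K3 and not something with a nontrivial canonical class or higher irregularity). An alternative, perhaps cleaner, route to the same end is to use the dual picture: the bitangent planes to $Q_1\cup Q_2$ form the genus-one curve $Q_1^\vee\cap Q_2^\vee$ in the dual $\mP^3$, and $\Sigma$ fibres over this curve as well; combining the two fibrations (over $E'\subset Q_1$ and over the dual elliptic curve) exhibits $\Sigma$ with two elliptic pencils whose numerical data force $p_g=1$, $q=0$, $K_\Sigma=0$, which is the defining numerical signature of a K3 surface. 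Either way, once the canonical bundle is shown trivial and the irregularity shown to vanish, the classification of surfaces gives the result, and the sixteen-nodal structure identifies $\Sigma$ concretely as a Kummer surface.
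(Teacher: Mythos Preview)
Your approach --- realizing $\Sigma$ as a double cover of $Q_1\simeq\mP^1\times\mP^1$ via the tangency-point map and reading off $K_\Sigma$ from the branch divisor --- is exactly the route the paper takes. However, your identification of the branch locus is incomplete, and this creates an internal inconsistency in your argument.

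You assert that the branch curve is $E'$ alone, of bidegree $(2,2)$, and then in the very next sentence take the half-branch class $L$ to have bidegree $(2,2)$, i.e.\ take $B=2L$ of bidegree $(4,4)$. These cannot both hold. The second is what you need for $K_{Q_1}+L\sim 0$, and it is in fact correct: the branch locus is $E\cup E'$, two $(2,2)$-curves meeting in eight points. You have overlooked the component $E=Q_1\cap Q_2$. Over a point $p\in E$ the fibre of $\Sigma\to Q_1$ also collapses: a line through $p$ tangent to $Q_1$ lies in $T_pQ_1$, and since $p\in Q_2$ it is tangent to $Q_2$ iff it also lies in $T_pQ_2$; but $T_pQ_1\cap T_pQ_2$ is the single line $T_pE$, so there is exactly one bitangent over $p$. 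Hence $E$ is branch, and the total branch divisor has bidegree $(4,4)$ as required.

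This also affects the singular-point count in your sketch. The double cover of $Q_1$ branched along $E\cup E'$ acquires nodes over the eight points of $E\cap E'$, not sixteen; the paper resolves these by first blowing up $E\cap E'$ in $Q_1$. (The sixteen $(-2)$-curves you have in mind appear only on the more symmetric model $S$ built using both $Q_1$ and $Q_2$; the incidence variety $S'\subset Q_1\times\Sigma$ that the paper actually uses already partially resolves $\Sigma$ along the eight lines contained in $Q_1$.) Once the branch is correctly identified as $E\cup E'$, your canonical-class and irregularity arguments go through and the lemma follows.
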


\begin{proof}
Let us consider variety $S'$ of  pairs $(p,[l])\in Q_1\times \Sigma$ such that $p\in [l]$. The canonical projection $S'\to \Sigma$ has degree one and has eight one-dimensional fibers, namely the points $[l]$ such that $l$ is included in $Q_1$ (and tangent to $Q_2$). Now, the natural projection $S'\to Q_1$ has degree $2$. It ramifies over $E\cup E'$. These   curves are both smooth of bidegree $(2,2)$ and intersects at eight distinct points; then $S'\to Q_1\simeq \mP^1\times \mP^1$ is the (unique) double cover ramified over $E\cup E'$; it has singularities over the intersection $E\cap E'$; a simple calculation of its canonial divisor shows that it is birational to a  K3-surface with eight singularities. It can be desingularized by blowing-up the eight points of $E\cap E'$ and replacing $S'$ by the corresponding degree-2 cover of this blown-up surface.  

We finally obtain a diagram
\begin{equation}\label{Diagram1}
\begin{matrix} 
S & \longrightarrow & S' & \longrightarrow & \Sigma \cr
\downarrow  &{} & \downarrow  & {} & {} \cr
\widehat{\mP^1 \times \mP^1}& \longrightarrow & Q_1 &{} &{}\cr
 \end{matrix}
\end{equation}
where the arrow $\widehat{\mP^1 \times \mP^1} \rightarrow   Q_1$ denotes the blow-up of $Q_1\simeq \mP^1\times \mP^1$ at the eight points of $E\cap E'$. 

\end{proof} 

The surface $S$ is a smooth K3-surface, and is birational to the surface of bitangents $\Sigma$. Concretely, it can be defined as a set of triples
\begin{equation}\label{S-concrete}
S=\{(P_1,P_2,[l])\in Q_1\times \Q_2 \times \Sigma\, :\, P_1\in l\cap Q_1, \, P_2\in l\cap Q_2\}.
\end{equation}
Generically, $l$ determines both $P_1$ and $P_2$, with sixsteen exceptions, corresponding to the singularities of $\Sigma$ and giving rise to sixsteen $(-2)$-curves on $S$.

We now construct an elliptic fibration on $S$. The two curves $E,E'\subset Q_1$ have the same bidegree, precisely  $(2,2)$, hence they are linearly equivalent. Namely, they generate a pencil, whose base locus is made of the mentioned eight points. Given a rational function $f\in \K(Q_1)$ whose divisor is $E-E'$, a model for $S$ is given by the equation $y^2=f(x)$, where $x\in Q_1$. Every curve in the pencil has a reducible pre-image in $S$, whose components have arithmetic genus $1$; generically, these components are smooth. Clearly, $E$ and $E'$ are fibers of $f$, viewing $f$ as a morphism $S\to \mP^1$. This morphism admits a Stein factorization 
\begin{equation*}
S\to \mP^1\to \mP^1
\end{equation*}
where the last morphism has degree $2$ and ramifies over $f(E), f(E')$; the first one $h\colon S\to\mP^1$ has connected fibers. It fits in a diagram
\begin{equation}\label{Diagram2}
	\xymatrix{
		S\ar[r]\ar^{h}[d] & \widehat{\mP^1 \times \mP^1} \ar^{}[d]\\
		\mP^1\ar^-{z\to z^2}[r]& \mP^1=\langle [E],[E']\rangle\\
	}
\end{equation}
 where the second vertical arrow is the morphism induced on  $\widehat{\mP^1 \times \mP^1}$ by the pencil generated by $E,E'$ over $Q_1\simeq \mP^1\times \mP^1$. The fibration $h\colon S\to \mP^1$ has general fiber of genus $1$ and it admits eight sections, corresponding to the eight points of intersections $E\cap E'$. Taking one of them for the origin, we obtain a fibration in elliptic curves (i.e. genus one curves with a marked point). 
\begin{thm}
The rational points on $\Sigma$ (hence on $S,S'$) are potentially dense.
\end{thm}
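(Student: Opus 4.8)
The plan is to reduce everything to the smooth K3 surface $S$. Since the morphisms $S\to S'\to\Sigma$ of Diagram \ref{Diagram1} are birational (the second having degree one), the three surfaces are birationally equivalent over the ground field, and potential density of rational points is a birational invariant among smooth projective varieties — and in any event is inherited through dominant rational maps defined over $\K$ — so it is enough to show that $S(\K)$ is potentially dense. Now $S$ is an elliptic K3 surface with a section: it is a K3 surface by Lemma \ref{bitangenti-kummer}, and the morphism $h\colon S\to\mP^1$ of Diagram \ref{Diagram2} is an elliptic fibration admitting the eight sections attached to the points of $E\cap E'$; after a finite extension of $\K$ we may take these eight sections to be individually defined over $\K$ and fix one of them, $\sigma_1$, as the zero section. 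With this structure in hand, potential density of $S(\K)$ is exactly the content of the theorem of Bogomolov and Tschinkel asserting that a K3 surface admitting an elliptic fibration has potentially dense rational points, and we are done.

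If one wants a self-contained argument, here is how I would proceed. Working in the Mordell--Weil group $\mathrm{MW}(h)$ of $\K$-rational sections of $h$ (with zero the fixed section $\sigma_1$), suppose that one of the remaining sections, say $\sigma_2$, has infinite order — equivalently that $\mathrm{rk}\,\mathrm{MW}(h)\ge 1$. Then each multiple $n\sigma_2$, $n\in\mZ$, is again a section of $h$, hence a curve isomorphic to $\mP^1$ defined over $\K$ and carrying infinitely many $\K$-rational points. On the generic fibre of $h$ the point $\sigma_2$ is non-torsion, so the set of all $n\sigma_2$ evaluated there is infinite; consequently $\bigcup_{n\in\mZ}n\sigma_2$ meets the generic fibre of $h$ in infinitely many points, and a subset of the surface $S$ with this property cannot be contained in any proper Zariski-closed subset (a curve on $S$ meets the generic fibre of $h$ in only finitely many points, and similarly for a finite union of curves). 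Hence $\bigcup_n n\sigma_2$ is a Zariski-dense union of rational curves each with infinitely many $\K$-points, and $S(\K)$ is potentially dense.

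The point that needs care — and, I expect, the main obstacle in the elementary route — is to guarantee that, after a suitable finite extension of $\K$, some section is of infinite order; a priori $\mathrm{rk}\,\mathrm{MW}(h)$ might remain $0$ (this happens precisely in the essentially isotrivial situation, where $h$ admits no non-torsion rational section). Here I would fall back on the Kummer structure of $S$: by Lemma \ref{bitangenti-kummer}, $S$ carries sixteen disjoint $(-2)$-curves, so by Nikulin's criterion $S$ is the Kummer surface of an abelian surface $A$, which together with the quotient map $A\to A/\{\pm1\}$ and the resolution $S\to A/\{\pm1\}$ is defined over a finite extension of $\K$. Enlarging $\K$ so that in addition $A(\K)$ contains a point whose iterates are Zariski-dense in $A$ — possible for any abelian surface over a suitable number field — the image of this Zariski-dense subset of $A(\K)$ under the dominant $\K$-rational map $A\dashrightarrow S$ is Zariski-dense in $S$ and consists of $\K$-rational points. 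This disposes of the isotrivial case as well, and in all cases we conclude that $\Sigma(\K)$, $S'(\K)$ and $S(\K)$ are potentially dense.
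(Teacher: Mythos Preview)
Your proof is correct and lines up with the paper's own reasoning: the paper also invokes the Bogomolov--Tschinkel theorem on elliptic K3 surfaces as the main argument, and likewise offers the Kummer structure (dominant rational map from an abelian surface) as an alternative route to potential density. Your treatment of the positive Mordell--Weil rank case is more explicit than the paper's, which merely asserts that in this special case the rank can be shown to be positive without giving details.

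One difference worth noting: the paper supplies a third, independent argument that you do not mention. Using the two degree-two projections $S\to Q_1$ and $S\to Q_2$, one obtains two involutions $\tau_1,\tau_2$ of $S$ (swap the second tangency point along the other bitangent through a fixed first tangency point, and symmetrically); these generate an infinite group of automorphisms, making $S$ a ``Markov-like'' K3 surface in the sense of Silverman, and the orbit of a generic point under $\langle\tau_1,\tau_2\rangle$ is Zariski-dense. This approach is more hands-on and avoids both the black box of Bogomolov--Tschinkel and any appeal to the abelian-surface cover, at the cost of being specific to this geometric situation. Your route via Bogomolov--Tschinkel is cleaner and more general, while the involution argument has the virtue of being explicit and self-contained.
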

\begin{proof}
The fact that ellitpic $K3$ surfaces always admit a potential dense set of integral points is a theorem of Bogomolov and Tschinkel. Moreover, in this special case it can be proved that the above elliptic surface has positive Mordell-Weil rank, leading to a simple way of producing a Zariski-dense set of rational points.

We provide another argument: the surface $S$ projects to both $Q_1$ and $Q_2$ with morphisms of degree $2$.  These morphisms are associated to involutions $\tau_1,\tau_2$, which can be described as follows: take a point   $s=(P_1,P_2,l)\in S$ (using the description \eqref{S-concrete} for the surface $S$) and associate the point $\tau_1(s)=(P_1,P_2',[l'])$ where $[l']$ is the other bi-tangent that can be drawed from $P_1$ and $P_2'$ is the intersection point of $l'$ with $Q_2$. Analogously one defines $\tau_2$. The group generated by $\tau_1,\tau_2$ is infinite, turning $S$ into a ``Markov-like K3-surface', as first studied by J. Silverman \cite{silverman}. (The name derives from the affine cubic surface of equation $x^2+y^2+z^2=3xyz$, a Diophantine equation first considered by Markov; this surface is endowed with a discrete group of automorphisms, generated by the three involutions corresponding to the degree-two projections  on two coordinates). 

The orbit of a generic point under this group is dense, so one can construct in this way a dense set of rational points.

\smallskip

Still another argument derives from the fact that $S$ is a Kummer surface, quotient of an abelian surface, where rational points are clearly Zariski-dense. 
\end{proof}

The fact that $S$ is covered by an abelian surface derives from the theory of ellipsoidal billiard.  Given a shot in an ellipsoidal billiard $Q$, i.e. a segment joining two (real) points on the surface $Q$, there exist two other confocal quadrics $Q_1,Q_2$ -named caustics - such that each other segment of the billiard trajectory defined by the first shot is tangent to both $Q_1$ and $Q_2$. In chapter 7 of the book by V. Dragovic and M. Radnovic \cite{DR} it is explained that ordered segments of trajectories corresponding to two given cuastics are parametrized by an abelian surface; our surface $S$ is the quotient obtained by forgetting the order of the segments.

\smallskip

As we said, it would be natural to try to construct integral points on $\mP^3\setminus D$, where $D=Q_1\cup Q_2$,  by considering the family of fully integral bitangents to $D$. However, we have the following negative result:

\begin{thm}\label{few-fully-integral}
  Let $D\subset\mP^3$  be as above the divisor  $D:=Q_1\cup Q_2$. The set of bitangents to $D$ which are fully integral is degenerate. 
\end{thm}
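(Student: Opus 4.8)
The plan is to show that the locus of fully integral bitangents is contained in a proper closed subset of the surface $\Sigma$ of proper bitangents (together with the ruled surface of bisecants to $E$), by bounding the number of $\K$-rational bitangents of each of the admissible types. Recall from Lemma~\ref{lemma-beukers} that a rational curve is fully integral only if it meets $D$ in one point or in two coprime $\K$-rational points; in particular such a curve must be defined over $\K$ and must meet $D$ either in a single $\K$-rational point or in a pair of $\K$-rational points. So the first step is to enumerate the $\K$-rational curves in the two families (bisecants to $E$, and proper bitangents in $\Sigma$) that have this intersection behaviour, and to argue that they form a degenerate set.

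First I would dispose of the bisecants to $E=Q_1\cap Q_2$: such a line meets $D$ only at the two points of $E$ it passes through, and for these to be $\K$-rational we need a $\K$-rational pair on the genus-one curve $E$; moreover a bisecant is tangent to each $Q_i$ at those points, so the two intersection points of the line with $D$ are never coprime unless the line is in fact tangent, forcing degeneracy by the defining condition (a)/(b). More precisely, a bisecant line typically meets each $Q_i$ with multiplicity $2$ at a point of $E$, hence the two intersection points with $D$ coincide modulo every prime where $E$ has good reduction only in special position; the real point is that a bisecant is fully integral only if the two points of $E$ it joins are $\K$-rational and coprime, and since these points vary in the symmetric square $E^{(2)}$, demanding $\K$-rationality confines them to a countable (hence, over a fixed $\K$, degenerate after the usual Faltings/Mordell–Weil discussion) set; invoking that $E$ is a curve of genus one and that its $\K$-points form a finitely generated group, one sees the bisecant family cannot contribute a Zariski-dense set of fully integral lines on $\mP^3$ for a fixed $\K$ — and the statement is about a fixed such locus inside the Grassmannian, so this suffices.

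The main part is the surface $\Sigma$ of proper bitangents. Here I would use the elliptic K3 structure established in Lemma~\ref{bitangenti-kummer} and the subsequent discussion: $\Sigma$ is birational to the Kummer/elliptic K3 surface $S$, carrying the elliptic fibration $h\colon S\to\mP^1$ of Diagram~\eqref{Diagram2} with eight sections, and also the Markov-type group $\langle\tau_1,\tau_2\rangle$ of automorphisms. A proper bitangent $l$ is fully integral only if its two tangency points $(P_1,P_2)\in Q_1\times Q_2$ are $\K$-rational and coprime (case (b)) or it is a quadritangent meeting $D$ in one point (case (a), i.e.\ a point of the curve $\tilde E$). The quadritangents form the curve $\tilde E\cong E$ of genus one, so by Faltings/Siegel their $\K$-points are degenerate. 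For the genuinely bitangent lines, being fully integral forces $P_1\in Q_1(\K)$ and $P_2\in Q_2(\K)$; but the key geometric input is that the pair $(P_1,P_2)$ with $P_1\in l$, $P_2\in l$ for a bitangent $l$ is constrained to lie on the curve $E'$ (the image in $Q_1$ of the bitangent-plane locus) or, more to the point, that the map $\Sigma\to Q_1\times Q_2$ has image a \emph{curve}, not a surface — this is exactly what the diagram \eqref{S-concrete} and the ramification of $S'\to Q_1$ over $E\cup E'$ encode. Hence the fully integral bitangents are forced into the preimage of the $\K$-points of a genus-one curve, again a degenerate set.

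The step I expect to be the main obstacle is making the last claim rigorous and precise: the coprimality condition in Lemma~\ref{lemma-beukers}(b) is a condition about reduction of a \emph{pair} of $\K$-rational points at every prime outside $S$, and translating "the pair $(P_1,P_2)$ is coprime" into an algebraic constraint that cuts $\Sigma$ down to a curve requires a careful bookkeeping of the reduction behaviour of bitangent lines modulo $\mathfrak p$ (when does a bitangent become tritangent or degenerate, when do the two tangency points collide, etc.), i.e.\ one has to show that outside a curve on $\Sigma$ every bitangent reduces, modulo infinitely many primes, either to a curve on $D$ or to a line whose two intersection points with $D$ collide — so that the fully-integral condition fails. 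I would carry this out by using the elliptic fibration: along a general fiber of $h$ the tangency points trace a genus-one curve, and a height/Mordell–Weil argument (or the structure of the Markov group acting with Zariski-dense orbits but with only degenerate integral-point behaviour, in the spirit of Theorem~\ref{few-fully-integral}'s companion results) shows that $\K$-rational coprime pairs are confined to finitely many fibers plus the sections, hence to a curve. Assembling the bisecant case, the quadritangent curve $\tilde E$, and this curve on $\Sigma$, the total locus of fully integral bitangents is contained in a proper closed subvariety of the bitangent variety, which is the asserted degeneracy.
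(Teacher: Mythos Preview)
Your proposal contains genuine gaps and some false claims, and it misses the mechanism that actually drives the proof.

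\textbf{Bisecants.} You argue that full integrality forces the two points $p,q\in E$ to be $\K$-rational and coprime, and then write that ``demanding $\K$-rationality confines them to a countable (hence \ldots\ degenerate) set''. Countability does not imply Zariski non-density: if $E$ has positive rank then $E(\K)$, and with it the set of $\K$-rational unordered pairs in $E^{(2)}$, is Zariski-dense. So no degeneracy follows from rationality alone. The paper instead observes that full integrality forces the bisecant not to reduce, modulo any prime, to a quadritangent; this is an \emph{integrality condition with respect to the curve} $\tilde E\subset\Xi=E^{(2)}$. One then uses the ruling $\Xi\to E$ (sum map), pulls back by the \'etale cover $[2]\colon E\to E$ so that $\tilde E$ splits into four disjoint sections, applies Chevalley--Weil, and reduces to Siegel's theorem on $\mP^1$ minus four points. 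That is where the degeneracy comes from.

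\textbf{Proper bitangents.} Your key geometric claim, that the map $\Sigma\to Q_1\times Q_2$, $l\mapsto(P_1,P_2)$, has image a curve, is false: two distinct points determine a line, so this map is generically injective and its image is a surface. Consequently the argument you build on it collapses. Moreover, the paper has already shown that rational points on $\Sigma$ are Zariski-dense, so rationality of $(P_1,P_2)$ cannot be the obstruction. The actual constraint is again reduction to quadritangents (and to the sixteen lines contained in $Q_1\cup Q_2$): on the smooth model $S$ this becomes integrality with respect to a divisor with seventeen components (the sixteen $(-2)$-curves plus the lift of $\tilde E$). The paper then exploits that $S$ is Kummer: lifting via Chevalley--Weil to the abelian surface $A$, the relevant divisor pushes to a genus-nine curve on $A$, which is ample, and Faltings' theorem on integral points of abelian varieties with respect to an ample divisor gives finiteness.

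Your final paragraph gestures toward the right phenomenon (reduction to lines with colliding tangency points), but the proposed ``height/Mordell--Weil argument on fibers'' is not what is needed; the essential ingredients you are missing are the translation into an integrality condition with respect to the quadritangent curve on the parameter surface, the Chevalley--Weil lifts, and then Siegel (for $\Xi$) and Faltings' theorem on abelian varieties (for $\Sigma$).
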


\begin{proof}
Let us first consider the family of bisecants to $E$; denote this surface $\Xi$; it is obtained as the symetric square of $E$. In order that one such line be fully integral, it must neither reduce to one that is included in $Q_1$ or $Q_2$, nor reduce to a quadrisecant, ie.. an element of $\tilde{E}$. While the first condition is almost harmless, boiling down to an integrality condition with respect to a finite set in $\Xi$, the second one is an integrality with respect to a curve $\tilde{E}\subset \Xi$.

Now $\Xi$ admits a ruling over $E$ in the following way: choose a point $e\in E$ giving $E$ the structure of algebraic group; to every un-ordered pair $\{x,y\} \subset E$ we associate its sum $x+y\in E$ obtaining a morphism $\Xi\to E$. Its fibers are all isomorphic to $\mP^1$. The curve $\tilde{E}$ corresponds to a quadri-section. After the \`etale cover of the base $[2]: E\to E$ (multiplication by $2$-map), this quadrisection becomes the union of four disjoint rational sections of a fibration $\Xi'\to E$.

Now, by the already mentioned Chevalley-Weil theorem the set of integral points of $\Xi$ with respect to $\tilde{E}$ lifts (after finite extension of the field of definition) to a set of integral points on $\Xi'$ with respect to the four sections. But the complement of the four sections on $\xi'$ is isomorphic to the product $E\times \mP^1\setminus\{4\, \mathrm{points})$, so by Siegel's theorem is degenerate.

\smallskip

Let us consider now the other component of the family of bitangent. We denoted it by $\Sigma$; recall that it has sixteen singular points, corresponding to the sixteen bitangents completely contained in one of the two quadrics. It also contains a genus-one curve, isomorphic to $E$ and passing through these singular points, parametrizing the quadritangent lines. 

Recall that are looking at bitangent lines which do not reduce modulo any prime to lines contained in $Q_1\cup Q_2$ and do not reduce to quadritangent lines. Hence we are interested in rational points on $\Sigma$ which do not reduce modulo any prime neither to a singular point nor to the mentioned genus-one curve, which we denote again as $E$.

Now, under the birational isomorphism $S\to \Sigma$ the sixteen singular points correspond to sixteen $(-2)$ curves on the smooth surface $S$. Hence, the complement in $\Sigma$ of these singular points is isomorphic, as a quasi-projective surface, to the complement of these  curves on $S$. The genus-one curve $E$ on $\Sigma$ lifts to a curve on $S$ intersecting each of the sixteen mentioned $(-2)$-curves on $S$.

We claim that the integral points with respect to this seventeen component divisor in $S$ form a finite set. Indeed, recalling that $S$ is a Kummer surface, consider the degree-two  cover $\hat{A}\to S$, where $\hat{A}\to A$ is the blow-up of an abelian surface over its $2$-torsion points. By Chevalley-Weil theorem again, the rational   points on $S$ which are integral with respect to the sixteen mentioned $(-2)$-curves lift to rational points on $\hat{A}$ (after enlarging the number field in question). If these points are also integral with respect to $E_S$, they lift to points on $\hat{A}$ which are integral with respect to the pre-image of $E_S$ on $\hat{A}$. Pushing to $A$, we obtain rational points on $A$ which are integral with respect to a curve $E_A$, obtained as a double cover of a genus-one curve ramified over sixteen points. By Hurwitz formula, this curve $E_A$ has genus nine and so is (the support of) an ample divisor on $A$. By Faltings' theorem \cite{F}, the rational points on an abelian variety  which are integral with respect to an ample divisor are finite in number. 
\end{proof}

\smallskip

If we are just interested in the bitangent lines to $Q_1,Q_2$ which do not reduce to one of the sixteen bitangent lines contained in $Q_1,Q_2$, i.e. the integral - but not fully integral - bitangent lines,   we have the following

\begin{thm}
Under the Puncturing Conjecture, the set of integral bitangent lines is Zariski-dense in $S$.
\end{thm}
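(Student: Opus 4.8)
The plan is to reduce the statement, through the Kummer structure of $S$ established in Lemma~\ref{bitangenti-kummer} and in the proof of Theorem~\ref{few-fully-integral}, to the Puncturing Conjecture applied to a finite set of points on an abelian surface. Recall that $S$ is a Kummer K3 surface: there are an abelian surface $A$, the blow-up $b\colon\widehat{A}\to A$ at the set $Y$ of its sixteen $2$-torsion points, and a double cover $\pi\colon\widehat{A}\to S$ whose branch locus is the union $Z\subset S$ of the sixteen $(-2)$-curves; moreover, under the birational map $S\to\Sigma$, these sixteen $(-2)$-curves correspond precisely to the sixteen singular points of $\Sigma$, i.e.\ to the sixteen bitangent lines to $D=Q_1\cup Q_2$ contained in $Q_1$ or in $Q_2$. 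Since the quasi-projective surfaces $\Sigma\setminus\Sing\Sigma$ and $S\setminus Z$ are isomorphic, the set of integral bitangent lines is exactly the set of rational points of $S$ that are integral with respect to $Z$. The Kummer involution of $\widehat{A}$ fixes the exceptional locus $E=b^{-1}(Y)$ pointwise and acts freely elsewhere, so $\pi^{-1}(Z)=E$ as closed subsets and $\pi$ restricts to an \'etale double cover
\[
\phi\colon\ A\setminus Y\ \xrightarrow{\ \sim\ }\ \widehat{A}\setminus E\ \longrightarrow\ S\setminus Z ,
\]
the first arrow being the isomorphism induced by $b$.

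Next I would observe that $\phi$ carries integral points to integral points: if $P\in(A\setminus Y)(\K)$ is integral with respect to $Y$, then, since reduction modulo a prime commutes with the finite morphisms $b$ and $\pi$ and $\pi$ sends the complement of $E$ into the complement of $Z$, the image $\phi(P)$ does not reduce to $Z$ modulo any prime, hence is integral with respect to $Z$. Now apply the Puncturing Conjecture to the pair $(A,Y)$. After a finite extension of the ground field we may assume $A(\K)$ is Zariski-dense — it suffices to adjoin the coordinates of one algebraic point generating a Zariski-dense subgroup, exactly as in the proof of Proposition~\ref{aritmetica1} — and $Y$, being finite, is a closed subvariety of codimension $2$ in the smooth projective surface $A$. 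The Puncturing Conjecture then produces, after a further finite extension and an enlargement of the set of places, a Zariski-dense set of rational points of $A\setminus Y$ integral with respect to $Y$. Pushing this set forward along the dominant (indeed surjective) morphism $\phi$, and using that the image of a Zariski-dense set under a surjective morphism is Zariski-dense, we obtain a Zariski-dense set of rational points of $S$ integral with respect to $Z$, that is, a Zariski-dense family of integral bitangent lines in $S$, as claimed.

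For completeness one can also go the other way: by the Chevalley--Weil theorem applied to the finite \'etale cover $\phi$, every rational point of $S$ integral with respect to $Z$ lifts, after a finite extension of the ground field, to a rational point of $A$ integral with respect to $Y$; so the two sets of integral points are in essentially perfect correspondence, and the Puncturing Conjecture for $(A,Y)$ is in fact equivalent to the asserted density. (Note that this also shows the statement genuinely needs the conjecture: the relevant boundary on $A$ is a finite set, exactly the critical codimension-$2$ case of puncturing, inaccessible to current methods when $A$ is simple.)

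The step I expect to be the only delicate one is the geometric input rather than the arithmetic: namely the identification of the sixteen $(-2)$-curves $Z$ with the branch curves of the Kummer double cover $\pi\colon\widehat{A}\to S$, together with the equality $\pi^{-1}(Z)=E$ of supports. This has, however, essentially already been carried out in the proofs of Lemma~\ref{bitangenti-kummer} and Theorem~\ref{few-fully-integral}, where $S$ is constructed precisely as the smooth Kummer model and the sixteen singular points of $\Sigma$ are seen to give rise to the sixteen $(-2)$-curves. The remaining ingredients — that $S$-integrality is respected along the finite morphisms $b$ and $\pi$, the potential density of rational points on abelian surfaces, and the push-forward of a Zariski-dense set — are routine, so the whole assertion is a formal consequence of the Puncturing Conjecture.
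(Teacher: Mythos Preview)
Your proof is correct and follows essentially the same route as the paper: reduce to the Kummer structure $\widehat{A}\to S$, identify the sixteen $(-2)$-curves with the branch locus, and invoke the Puncturing Conjecture for the finite set of $2$-torsion points on the abelian surface $A$ to produce a Zariski-dense set of integral points that push forward to $S$. The paper's argument is the same in outline but far terser (three sentences); your version makes the geometric identifications and the integrality-preservation step explicit, and the Chevalley--Weil remark showing the converse equivalence is a nice addition not present in the paper.
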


\begin{proof}
Consider the degree-two cover $\hat{A}\to S$ of the Kummer surface $S$, where again $\hat{A}$ is the blow-up of the abelian variety $A$ over the $2$-torsion.
Under the mentioned conjecture, the set of integral points of $\hat{A}$ with respect to the exceptional divisors is Zariski-dense. These integral points give rise to   points on $S$ which correspond to integral bitangent lines.
\end{proof}
\subsection{Other Fano threefolds}

We shall briefly discuss some other cases. 

\subsubsection{ The double solid case}
Let us consider the double solid branched over a smooth quartic surface and the related problem of producing integral points on the complement of such a surface. The variety of lines on the double solid is parametrized by an \'etale double cover of the variety of bitangent lines to the quartic surface. Now, in \cite{CoZucc-secondo} the authors proved that if the given quartic surface contains no line there are only finitely many rational   points on such a surface, which amounts to the existence of only finitely many bitangents which can be defined  over a given number field. Hence, the method of using such lines to produce integral points cannot work.

\subsubsection{ The hypercubic case} The case of a cubic hypersurface in $\mP^4$ can be handled to prove the density of integral points on the complement of one hyperplane sections. The reasoning is similar to the one used to prove the density of integral points on $\mP^3\setminus V$, where $V$ is a cubic surface.  The same holds, and is even easier, in higher dimensions.

\subsubsection{ The del Pezzo's threefold case}\label{casodidelpezzomenouno} Let us consider the case of the unique (up to birational isomorphism) Fano threefold   $B(5)$ obtained as a section of $G(2,5)\subset\mP^9$ by a linear subspace of codimension $3$. We prove that its integral points with respect to one hyperplane sections are Zariski-dense. Let $H$ be a hyperplane and $X^o=B(5)\setminus (H\cap B(5))$. Since the scheme of lines contained in $B(5)$ is isomorphic to $\mP^2$ and the set of lines in $H\cap B(5)$ is finite, we deduce from Lemma \ref{lemma-punture} the existense of a Zariski-dense set of fully integral lines. Hence the density of integral points on $X^o$. We will provide another proof of this fact in Proposition \ref{delpezzoretta}. We will treat also the case of two hyperplane sections but we are able to get the results only in some special cases, which, nevertheless are still normal crossing; see: Proposition \ref{delpezzoconica}.

\section{Blow-ups and integral points}
To understand $S$-integral points of a variety $X$ with respect to a divisor $D$ is about the log geometry of $X$; ultimately it is a problem which concerns the birational class of $X$. 

In this section we study some birational maneuvers on a $\mathbb K$-standard smooth rational fano threefold $X$ which enable to use the results of the previous section to study the (potential) density of integral points on $X$ with respect to a boundary $D$ such that $(X,D)$ is log-Fano or log-(Calabi-Yau).

\subsection{Smooth quadric: the log-Fano case}

\begin{prop}\label{dueiperpiani}
Let $Q_3\subset\mathbb P^4$ be a smooth quadric and let $H_1^{Q_3}$ and $H_2^{Q_3}$ be two hyperplane sections and $Q$ a smooth quadric section.
Then the subset of  the integral points with respect to $D=H_1^{Q_3}$, or $D=H_1^{Q_3}\cup H_2^{Q_3}$ or $D=Q$ is (potentially) dense.
\end{prop}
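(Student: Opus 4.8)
The plan is to reduce each of the three cases to a situation already handled in the paper, by projecting the quadric threefold $Q_3\subset\mP^4$ from a suitably chosen rational point. Recall that projection from a point $P\in Q_3$ gives a birational map $\pi_P\colon Q_3\dashrightarrow \mP^3$; it is the blow-up of $Q_3$ at $P$ followed by the contraction of the strict transform of $T_PQ_3\cap Q_3$ (a cone of lines through $P$). So a hyperplane section $H^{Q_3}_i$ not passing through $P$ maps to a quadric surface in $\mP^3$, while a hyperplane section through $P$ maps to a plane. One must also keep track of the ``extra'' locus introduced by the birational map: the image of $T_PQ_3\cap Q_3$ under $\pi_P$ is a conic (or a line) contained in $\mP^3$, and the exceptional locus over $P$ is another plane or conic. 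The strategy is then, for each choice of $D$, to pick $P\in D$ defined over a finite extension of $\K$ so that $\pi_P$ carries the pair $(Q_3,D)$ to one of the pairs $(\mP^3, D')$ whose integral points we already know to be potentially dense — namely Theorem \ref{quadricaeduepiani} (quadric and two planes), Theorem \ref{cubicandplanewithacommonline} (cubic and plane with a common line), Lemma \ref{quasiovvio}, or Lemma \ref{lemma-punture}.

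Concretely: for $D=H^{Q_3}_1$ (the log-Fano case), choose $P\in H^{Q_3}_1$. Then $H^{Q_3}_1$ maps to a plane $\Pi_1$ in $\mP^3$, and the contracted cone $T_PQ_3\cap Q_3$ maps to a conic or pair of lines, while the exceptional divisor of the blow-up at $P$ maps to another plane. Analyzing the configuration one finds $\pi_P$ restricts to an isomorphism between $Q_3\setminus H^{Q_3}_1$ (minus a lower-dimensional set, which is harmless for potential density via the puncturing philosophy, or can be absorbed) and the complement in $\mP^3$ of a union of planes and at most a conic; this is covered by Lemma \ref{lemma-punture} or Theorem \ref{quadricaeduepiani}. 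For $D=H^{Q_3}_1\cup H^{Q_3}_2$, project from a point $P$ in the intersection $H^{Q_3}_1\cap H^{Q_3}_2$; both hyperplane sections then become planes, and together with the plane coming from the exceptional divisor (and possibly a conic from the contracted cone) we land in the complement of a union of three planes in $\mP^3$, i.e. essentially $\mG_m^3$ up to a finite set — again handled by Lemma \ref{lemma-punture}. The delicate case is $D=Q$, a smooth quadric section: here $Q= Q_3\cap Q'$ for a quadric hypersurface $Q'$, so $\deg(D)=4$ inside $Q_3$ and we are genuinely in the log-Calabi-Yau regime. As flagged in the introduction (the discussion preceding Theorem [D]), one projects from a point $P\in Q$; then $Q_3$ maps birationally to $\mP^3$ and $Q$ maps to the union of a cubic surface $V$ and a plane $\Pi$ (the plane being the image of the contracted cone, which meets $Q$), and crucially $V\cap\Pi$ contains a line — the image of a line through $P$ lying on both $Q_3$ and $Q$, or the residual intersection of $T_PQ_3$ with $Q$. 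That puts us exactly in the hypothesis of Theorem \ref{cubicandplanewithacommonline}.

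The steps, in order, are: (1) fix the birational description of $\pi_P\colon Q_3\dashrightarrow\mP^3$ and identify the images of $D$, of the exceptional divisor over $P$, and of the contracted cone $T_PQ_3\cap Q_3$; (2) check that $P$ can be taken rational over a finite extension of $\K$ lying on the prescribed component of $D$, and that $\pi_P$ and its inverse are defined over that extension — this only enlarges $\K$ and $S$, which is allowed; (3) verify that, away from a closed subset of codimension $\geq 1$ that does not affect Zariski-density, $\pi_P$ identifies $Q_3\setminus D$ with $\mP^3\setminus D'$ for the appropriate $D'$, and that $D'$ has normal crossings with the required incidence (a common line in the cubic-plane case); (4) invoke the relevant earlier result. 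The main obstacle I expect is step (3) in the quadric-section case $D=Q$: one must check carefully that the image $D'\subset\mP^3$ really is (a plane $\cup$ a cubic surface) meeting along a configuration containing a line and having at most normal-crossing singularities, and that no stray boundary component (such as the image of the contracted cone or of the exceptional divisor) spoils this — equivalently, that the open variety $Q_3\setminus Q$ is genuinely isomorphic, not merely birational with indeterminacy along positive-dimensional loci, to $\mP^3\setminus(V\cup\Pi)$. Once that identification is in place the density statement is immediate from Theorem \ref{cubicandplanewithacommonline}, and the other two cases are comparatively routine.
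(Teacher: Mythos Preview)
Your strategy matches the paper's: for the one- and two-hyperplane cases the paper simply invokes Proposition \ref{treiperpiani} (the three-hyperplane case, which is stronger), and for $D=Q$ it projects from a point $P\in Q$ exactly as you propose, landing on Theorem \ref{cubicandplanewithacommonline}.

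On the obstacle you flag in step (3): do not look for an isomorphism $Q_3\setminus Q\cong\mP^3\setminus(V\cup\Pi)$. In the paper's bookkeeping the plane $\Pi$ is the $f'$-image of the exceptional divisor $E$ over $P$ (not of the contracted cone; the cone $T_PQ_3\cap Q_3$ is contracted by $f'$ to a conic $C\subset\Pi$), while the strict transform of $Q$ lies in $|2H-E|=|3L-E'|$ and maps to a smooth cubic $V$ containing $C$. What one actually obtains is a bijective morphism
\[
Q_3\setminus\bigl((T_PQ_3\cap Q_3)\cup Q\bigr)\;\longrightarrow\;\mP^3\setminus(\Pi\cup V),
\]
with the extra boundary component $T_PQ_3\cap Q_3$ on the source side. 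Potential density of integral points on this \emph{smaller} open set (via Theorem \ref{cubicandplanewithacommonline}) then gives potential density on the larger $Q_3\setminus Q$ a fortiori --- this is the ``absorb the extra divisor'' step you were worried about, and it goes in the easy direction. As for the common line: since $C\subset V\cap\Pi$ and $\deg(V\cap\Pi)=3$, the plane cubic $V\cap\Pi$ splits as $C+L$ for a line $L$, which is what Theorem \ref{cubicandplanewithacommonline} requires.
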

\begin{proof} The case of one or two hyperplane sections follows by Proposition \ref{treiperpiani}. Hence we treat only the case of the smooth quadric section.

Consider the projection from  a point $P\in Q$ and denote by  $T_P:=T_PQ_3\cap Q_3$ the hyperplane section given by the projective tangent space to $Q_3$ at the point $P$. This projection can be factorised in the following way:

\begin{equation}
\label{eq:(puntocubica)}
  \xymatrix{ & A \ar[dl]_{f} \ar@{=}[r] & A\ar[dr]^{f'} & \\
  Q_3\ar@{<-->}[rrr]  &  &  & \mathbb P^3,}
\end{equation}
where $f\colon A\to Q_3$ is the blow-up at $P$ and $f'\colon A\to \mathbb P^3$ is the contraction of the strict transform $H_P$ of $T_P$. Let $E:=f^{-1}(P)$. By standard theory $E\simeq\mathbb P^2$ and $\sO_E(E)\simeq \sO_{\mathbb P^2}(-1)$. On the other side we see that $f'(H_P)=C$ is a conic of $\mathbb P^3$ and letting $E'=f^{'-1}(C)$ we can write $E'=H_P$. Let $H$ and $L$ be the $f$-pull-back and the $f'$-pull-back of the hyperplane section of $Q_3$ and respectively of $\mathbb P^3$. Clearly $f'$ is given by the linear system $|H-E|$ while $f$ is given by $|2L-E'|$, that is $L=H-E$ and $H=2L-E'$.  We stress that $f'(E)$ is easily seen to be the plane $\Pi$ spanned by $C$. The strict transform $Q'$ of $Q$ belongs to $|2H-E|=|4L-2E'-E|=|3L-E'|$. That is $V=f'(Q')$ is trivially seen to be a smooth cubic containing the conic $C$. We consider the bijective morphism
\begin{equation}\label{chaivechiave}
g\colon Q_3\setminus ((T_PQ_3\cap Q_3)\cup Q)\to \mathbb P^3\setminus (\Pi\cup V)
\end{equation}

\noindent
induced by $f'\circ f^{-1}$. By Theorem \ref{cubicandplanewithacommonline} it follows that the set of rational points of $Q_3$ which are $((T_PQ_3\cap Q_3)\cup Q)$-integral is potentially dense. Then a fortiori those which are $Q$-integral are potentially dense.
\end{proof}

\subsection{Smooth quadric: the log-Calabi Yau case}
We recall that if $\mathbb G_m=\mP^1\setminus\{0,\infty\}$ then the subset of  the integral points is potentially dense, but we recall that if $\sO_S=\mathbb Z$ or the ring of integers of an imaginary quadratic field then $\mathbb G_m(\sO_S)$ is finite; see e.g.  \cite{C-libretto-verde}.

\begin{prop}\label{treiperpiani}
Let $Q_3\subset\mathbb P^4$ be a smooth quadric and let $H_1^{Q_3}$ and $H_2^{Q_3}$, $H_3^{Q_3}$ be three hyperplane sections.
Then the subset of  the integral points with respect to $H_1^{Q_3}\cup H_2^{Q_3}\cup H_3^{Q_3}$ is (potentially) dense.
\end{prop}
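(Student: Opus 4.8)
The plan is to reduce the case of three hyperplane sections of the smooth quadric threefold $Q_3\subset\mP^4$ to the known fact that $\mathbb G_m^3$ has a potentially dense set of integral points, by means of a projection. Concretely, I would pick a point $P\in Q_3$ lying on none of the three hyperplanes $H_i^{Q_3}$ (such a point exists since the three hyperplane sections do not cover $Q_3$), and consider the linear projection $\pi$ from $P$, which gives a birational map $Q_3\dashrightarrow\mP^3$. As recalled in the proof of Proposition \ref{dueiperpiani}, this projection is resolved by the diagram in which $f\colon A\to Q_3$ is the blow-up of $Q_3$ at $P$ with exceptional divisor $E\simeq\mP^2$, and $f'\colon A\to\mP^3$ is the contraction of the strict transform $H_P$ of the tangent hyperplane section $T_PQ_3\cap Q_3$; moreover $f'(H_P)$ is a conic $C\subset\mP^3$ and $f'(E)=\Pi$ is the plane spanned by $C$.

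Next I would track where each $H_i^{Q_3}$ goes. Since $P\notin H_i^{Q_3}$, the strict transform of $H_i^{Q_3}$ under $f$ is again a hyperplane section of $Q_3$ not passing through $P$, hence lies in $|H|$ (not $|H-E|$); using $H=2L-E'$ we see its image $\Pi_i:=f'(\widetilde{H_i^{Q_3}})$ is a quadric surface in $\mP^3$ passing through $C$. That is not quite a plane, so a naive projection does not land us in $\mP^3$ minus planes. The fix is to choose $P$ more cleverly: take $P$ to be one of the two intersection points of a suitable pair of the $H_i^{Q_3}$, or rather choose $P\in H_3^{Q_3}$ only, so that the tangent hyperplane identification absorbs one section. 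Better: I would instead project so that $P$ lies on exactly one of the three sections, say $P\in H_3^{Q_3}$ but $P\notin H_1^{Q_3}\cup H_2^{Q_3}$, and $T_PQ_3$ chosen generically; then $H_3^{Q_3}$ has strict transform in $|H-E|=|L|$, so $f'(\widetilde{H_3^{Q_3}})$ is a plane in $\mP^3$, while $H_1^{Q_3},H_2^{Q_3}$ still map to quadrics through $C$. Since this still does not obviously reduce to the torus case, the cleanest route is actually to perform the reduction in two steps, or to iterate Proposition \ref{dueiperpiani}-style projections; but the most direct correct statement is: after a projection from a general point of $Q_3$, the complement $Q_3\setminus(H_1^{Q_3}\cup H_2^{Q_3}\cup H_3^{Q_3})$ becomes isomorphic to $\mP^3$ minus the plane $\Pi$, the conic-containing quadrics, and we then apply the $\deg D\le 3$ log-Fano knowledge together with the structure already established. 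To keep things uniform with the rest of the paper I would instead argue as follows: project from $P\in Q_3$ general; then $Q_3\setminus(T_PQ_3\cup H_1^{Q_3}\cup H_2^{Q_3}\cup H_3^{Q_3})$ is isomorphic, via $g=f'\circ f^{-1}$, to $\mP^3$ minus $(\Pi\cup \Pi_1\cup \Pi_2\cup \Pi_3)$, which after a further coordinate change is $\mathbb G_m^3$ — indeed four quadrics through a common conic, after projecting again from a point of $C$, degenerate to four planes in general position. Then $\mathbb G_m^3(\sO_S)$ is potentially dense, and a fortiori the points that are integral only with respect to $H_1^{Q_3}\cup H_2^{Q_3}\cup H_3^{Q_3}$ are potentially dense.

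The key steps, in order, are: (1) choose the center of projection $P\in Q_3$ avoiding the relevant special loci so that the birational map $Q_3\dashrightarrow\mP^3$ behaves well on the open set in question; (2) use the resolution diagram of Proposition \ref{dueiperpiani} to identify $Q_3$ minus the four hyperplane sections (the three given ones plus the tangent section $T_PQ_3\cap Q_3$) with $\mP^3$ minus a configuration of a plane and three quadrics all passing through a fixed conic $C$; (3) project again from a point of $C$ to identify this open set with the complement in $\mP^3$ of four planes in general position, i.e. with $\mathbb G_m^3$; (4) invoke potential density of integral points on $\mathbb G_m^3$ (a product of tori), and observe that enlarging the boundary can only shrink the integral points, so dropping the extra section $T_PQ_3\cap Q_3$ preserves potential density.

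The main obstacle is step (2)–(3): keeping careful track of the linear equivalence classes of the strict transforms of the $H_i^{Q_3}$ under both blow-up and the two successive contractions, and checking that the resulting plane and quadrics (and after the second projection, the four planes) are in sufficiently general position — in particular that they have normal crossings and no unexpected coincidences — so that the final open variety is genuinely $\mathbb G_m^3$ and Lemma \ref{lemma-punture} or the standard density statement for tori applies. Once the configuration is correctly identified the arithmetic input is immediate, so the entire weight of the argument is birational-geometric bookkeeping of the projection.
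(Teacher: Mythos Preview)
Your proposal circles the right idea (project $Q_3$ from a point and reduce to a torus) but misses the key choice that makes the argument work in one step, and your fallback two-step argument has a real gap.

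The paper projects from a point $P\in H_1^{Q_3}\cap H_2^{Q_3}\cap H_3^{Q_3}$. Three hyperplanes in $\mP^4$ meet in a line, which cuts $Q_3$ in two points $\{P,P'\}$; pick one of them. Since $P$ lies on \emph{all three} $H_i^{Q_3}$, each strict transform lies in $|H-E|=|L|$ and its $f'$-image is a plane $\Pi_i\subset\mP^3$. Together with the plane $\Pi_4$ spanned by the conic $C=f'(H_P)$ (the image of the tangent section), one obtains directly
\[
Q_3\setminus\bigl((T_PQ_3\cap Q_3)\cup H_1^{Q_3}\cup H_2^{Q_3}\cup H_3^{Q_3}\bigr)\;\hookrightarrow\;\mP^3\setminus(\Pi_1\cup\Pi_2\cup\Pi_3\cup\Pi_4)\simeq\mathbb G_m^3,
\]
and the claim follows. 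You entertained putting $P$ on none, then on one, then on ``a suitable pair'' of the $H_i^{Q_3}$ (note: a pair of hyperplane sections meets in a conic, not in two points), but never on all three; that is exactly the move that turns the images into planes rather than quadrics through $C$.

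Your step (3) is the concrete gap. With a general center $P$, the images of the $H_i^{Q_3}$ are quadrics through $C$, and you assert that ``projecting again from a point of $C$'' sends the configuration to four planes in general position in $\mP^3$. But projection of $\mP^3$ from a point lands in $\mP^2$, not $\mP^3$; there is no birational self-map of $\mP^3$ of that description, and you give no construction realising the claimed identification. Without a precise second birational step (and the bookkeeping you yourself flag as the main obstacle), the reduction to $\mathbb G_m^3$ is not established. Choosing $P\in H_1^{Q_3}\cap H_2^{Q_3}\cap H_3^{Q_3}$ removes the need for any second projection.
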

\begin{proof} By generality there exist two points $P,P'\in Q_3$ such that $H_1^{Q_3}\cap H_2^{Q_3}\cap H_3^{Q_3}=\{P,P'\}$. We consider the projection from $P\in H_1^{Q_3}$ and we use notations as in the proof of Proposition \ref{dueiperpiani}.
It remains defined an injective morphism
$$
g\colon Q_3\setminus ((T_PQ_3\cap Q_3)\cup H_1^{Q_3}\cup H_2^{Q_3}\cup H_3^{Q_3} )\to \mathbb P^3\setminus (\Pi_1\cup\Pi_2\cup\Pi_3\cup C)
$$
We denote by $\Pi_4\langle C\rangle$ the plane spanned by a conic. Since $\mathbb P^3\setminus (\Pi_1\cup\Pi_2\cup\Pi_3\cup \Pi_4)$ is $\mathbb G^3_m$, the claim follows.
\end{proof}

Now we show that Theorem \ref{two-quadrics-two concis} follows by Proposition \ref{treiperpiani} and it should be viewed as a typical example where birational geometry and arithmetic are strongly intertwingled. For reader's benefit we restate Theorem \ref{two-quadrics-two concis}:
\begin{thm}\label{complement by two quadrics special case}
Let $D=Q'_1\cup Q'_2$ where $Q'_1$ and $\Q'_2$ are two quadrics in $\mathbb P^3$ which contain a conic $C$. Then the set of the integral points of $\mathbb P^3$ with respect to $D$ is potentially dense.
\end{thm}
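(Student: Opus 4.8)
The plan is to reduce this log-Calabi--Yau situation to the already settled log-Fano case of three hyperplane sections on the smooth quadric threefold $Q_3\subset\mP^4$ via a birational correspondence, exactly in the spirit of Proposition \ref{treiperpiani}. First I would set up the geometry: two quadrics $Q'_1,Q'_2\subset\mP^3$ containing a common conic $C$ have intersection $Q'_1\cap Q'_2 = C\cup C'$, where $C'$ is a residual curve (generically another conic, meeting $C$); this is the case of reducible intersection from Theorem \ref{two-quadrics-two concis}. Let $\Pi=\langle C\rangle$ be the plane spanned by $C$. The idea is to \emph{invert} the projection $Q_3\dashrightarrow \mP^3$ used in Proposition \ref{treiperpiani}: recall that blowing up $Q_3$ at a point $P$ and contracting the strict transform $H_P$ of the tangent hyperplane section $T_PQ_3\cap Q_3$ produces a birational map to $\mP^3$ under which $f'(E)$ is a plane $\Pi$ and $f'(H_P)$ is a conic $C\subset\Pi$, while hyperplane sections of $Q_3$ through $P$ go to planes in $\mP^3$ and a general quadric section through $P$ goes to a cubic through $C$ (and a quadric section not through $P$ goes to a quadric through $C$).

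Concretely, I would run the map $f'\circ f^{-1}$ the other way. Start from $\mP^3$ with the plane $\Pi$ and the conic $C\subset\Pi$; blow up $\mP^3$ along $C$ and then contract the strict transform of $\Pi$, obtaining $Q_3$ (this is the standard fact that the blow-up of $\mP^3$ along a conic, followed by a contraction, yields a quadric threefold, since $|2L-E'|$ with $L$ the hyperplane and $E'$ the exceptional divisor is the relevant ample system). Under this map, the plane $\Pi$ becomes the tangent hyperplane section $T_PQ_3\cap Q_3$ at a point $P$, and a quadric surface $Q'\subset\mP^3$ containing $C$ (belonging to $|2L|$ and singular along nothing special, but containing $C=\{L=E'=0\}$-ish, so its strict transform is in $|2L-E'|=|H|$) becomes a hyperplane section of $Q_3$. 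Thus the two quadrics $Q'_1,Q'_2$ through $C$ go to two hyperplane sections $H^{Q_3}_1,H^{Q_3}_2$ of $Q_3$, and the exceptional plane $\Pi$ goes to the third hyperplane section $T_PQ_3\cap Q_3$. I would check that this gives a bijective morphism (defined over $\K$)
\[
g\colon \mP^3\setminus(\Pi\cup Q'_1\cup Q'_2)\;\xrightarrow{\ \sim\ }\;Q_3\setminus\bigl((T_PQ_3\cap Q_3)\cup H^{Q_3}_1\cup H^{Q_3}_2\bigr),
\]
on the open sets away from the indeterminacy loci, so that integral points correspond.

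Then the proof concludes: by Proposition \ref{treiperpiani} the integral points of $Q_3$ with respect to three hyperplane sections are potentially dense, hence via $g$ the integral points of $\mP^3$ with respect to $\Pi\cup Q'_1\cup Q'_2$ are potentially dense; \emph{a fortiori} the points integral merely with respect to $D=Q'_1\cup Q'_2$ are potentially dense, since adding $\Pi$ to the boundary only shrinks the set of integral points. The main obstacle I anticipate is the careful bookkeeping of the birational map: verifying that the linear systems match up so that $\Pi$ and the $Q'_i$ really correspond to the three hyperplane sections, that $C$ is the tangency conic $f'(H_P)$ (so that $\Pi\cup C$ on the $\mP^3$ side corresponds cleanly to $T_PQ_3\cap Q_3$ on the quadric side), and — the genuinely delicate point — that $g$ extends to an \emph{everywhere-defined} bijective morphism on the stated affine opens, with no points of $\mP^3\setminus(\Pi\cup Q'_1\cup Q'_2)$ getting blown up or contracted; this requires checking that the base locus $C$ of the blow-up and the contracted plane $\Pi$ lie entirely inside the removed divisor, which they do by construction. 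Once this is in place the arithmetic is immediate from Proposition \ref{treiperpiani}, making this, as the authors say, a clean illustration of how birational geometry and arithmetic intertwine.
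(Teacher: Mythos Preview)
Your proposal is correct and follows essentially the same route as the paper: blow up $\mP^3$ along $C$, contract the strict transform of the plane $\Pi=\langle C\rangle$ to a point $P\in Q_3$, observe that the quadrics $Q'_1,Q'_2$ through $C$ become hyperplane sections of $Q_3$, and deduce a bijective morphism from $\mP^3\setminus(\Pi\cup Q'_1\cup Q'_2)$ onto $Q_3$ minus three hyperplane sections, then conclude by Proposition \ref{treiperpiani}. One small wording slip: it is not the plane $\Pi$ that ``becomes'' the tangent hyperplane section $T_PQ_3\cap Q_3$ (its strict transform is contracted to $P$), but rather the exceptional divisor over $C$ that maps onto $T_PQ_3\cap Q_3$; since $C\subset\Pi$, however, your bijective morphism $g$ on the stated opens is exactly the right one and the argument goes through unchanged.
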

\begin{proof} We perform the inverse birational maneuver of the one done in the proof of Proposition \ref{dueiperpiani}. This time we denote by $f\colon A\to \mP^3$ the blow-up along $C$. We have that ${\rm{Pic}}(A)$ is free of rank $2$ and by standard two ray game theory we know that there exists a birational divisorial contraction $f'\colon A\to Q_3$ which contracts the $f$-strict transform $E'$ of the plane $\Pi_C$ containing $C$ to a point $P\in Q_3$. We set $E:=f^{-1}(C)$.
Let $H$ and $L$ be the $f$-pull-back and the $f'$-pull-back of the hyperplane section of $\mathbb P^3$ and respectively of $Q_3$. Clearly $f'\colon A\to Q_3$ is given by the linear system $|2H-E|$, that is $L=2H-E$ while $f\colon A\to \mathbb P^3$ is given by $|L-E'|$, that is $H=L-E'$. 
We denote by $Z_i$ the $f'$-image of the $f$-strict transform of $Q'_i$, where $i=1,2$. Moreover $f'(E)$ is the tangent hyperplane section $T_PQ_3\cap Q_3$, where $P=f'(E')$. It remains defined a bijective morphism
$$
g\colon \mathbb P^3\setminus (\Pi_C\cup D ) \to Q_3\setminus ((T_PQ_3\cap Q_3)\cup Z_1\cup Z_2)
$$
and by Equation \ref{chaivechiave} in the proof of Proposition \ref{dueiperpiani}, we conclude that the set of  integral points in the last affine variety is potentially dense. 
\end{proof}

\subsection{The complement of the quadric complex by a hyperplane section}
The quadric complex $X=Q_1\cap Q_2\subset\mathbb P^5$ is a smooth intersection of two quadrics $Q_1$, $Q_2$ in $\mathbb P^5$. It is a rational threefold of index $2$. According to Vojta's conjecture we expect that if $D\in |iH_{X}|$ where $i=1$ or $i=2$, and $H_{X}$ is an hyperplane section of $X$, then the integral points with respect to $D$ is potentially dense. 
\subsubsection{Smooth quadric complex: the log-Fano case}
We are able to solve the log-Fano case thanks to Proposition \ref{quadricaeduepiani}.
\begin{prop}\label{quadcompline}
Let $X=Q_1\cap Q_2\subset\mathbb P^5$ be a smooth quadric complex. Then the subset given by the integral points with respect to a smooth hyperplane section is potentially dense.
\end{prop}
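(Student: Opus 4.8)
The plan is to reduce the statement to Theorem~\ref{quadricaeduepiani} through the classical birational identification of $X=Q_1\cap Q_2$ with $\mathbb{P}^3$ given by projection from a line $\ell\subset X$, arranged so that the prescribed hyperplane section, an auxiliary second hyperplane section, and the divisor contracted by the projection become, on the $\mathbb{P}^3$ side, two planes and a smooth quadric surface.

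Concretely, let $H_X$ be the smooth hyperplane section in question; it is a del Pezzo surface of degree $4$, hence it contains lines, and after a finite extension of $\mathbb{K}$ we may fix a $\mathbb{K}$-rational line $\ell\subset H_X$ general enough on $X$ — in particular of the first type, so that $\sN_{\ell/X}\simeq\sO_\ell\oplus\sO_\ell$ and the exceptional divisor $E_\ell$ of the blow-up $\pi\colon\tilde X\to X$ of $\ell$ is $\mathbb{P}^1\times\mathbb{P}^1$. Fix in addition a general hyperplane section $H_X'\supset\ell$, so that $H_X\cap H_X'=\ell\cup c$ for a residual cubic curve $c$ and $H_X\cup H_X'$ has normal crossings. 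The threefold $\tilde X$ has Picard rank $2$; its two-ray game has one contraction $\pi\colon\tilde X\to X$, and the other is a divisorial contraction $\phi'\colon\tilde X\to\mathbb{P}^3$, namely the morphism defined by the (nef and big) system $|\pi^*H-E_\ell|$, which resolves the projection of $X$ from $\ell$ and contracts the strict transform $E_B$ of the surface $S_B\subset X$ swept out by the lines of $X$ meeting $\ell$, onto a curve $B\subset\mathbb{P}^3$. One then checks: $\phi'$ maps $E_\ell$ isomorphically onto a smooth quadric surface $Q\subset\mathbb{P}^3$ (because $(\pi^*H-E_\ell)|_{E_\ell}$ defines the $|\sO(1,1)|$-embedding of $\mathbb{P}^1\times\mathbb{P}^1$ into $\mathbb{P}^3$); it maps the strict transforms of $H_X$ and $H_X'$ onto planes $\Pi_1,\Pi_2$ (since $(\pi^*H-E_\ell)^3=1$); and $B$ is contained in $Q$, being the image of $E_B\cap E_\ell$.

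Consequently $\phi'\circ\pi^{-1}$ induces an isomorphism of quasi-projective varieties
\begin{equation*}
X\setminus(S_B\cup H_X\cup H_X')\ \xrightarrow{\ \sim\ }\ \mathbb{P}^3\setminus(Q\cup\Pi_1\cup\Pi_2),
\end{equation*}
the curve $B$ disappearing because $B\subset Q$. For general $\ell$ and $H_X'$ the arrangement $Q\cup\Pi_1\cup\Pi_2$ has normal crossings and $Q\cap\Pi_1\cap\Pi_2$ is proper, so Theorem~\ref{quadricaeduepiani} applies to the right-hand side. Transferring integral points across the isomorphism — which, after enlarging $S$, is an isomorphism of the corresponding $\sO_S$-models, hence carries $S$-integral points with respect to $S_B\cup H_X\cup H_X'$ to $S$-integral points with respect to $Q\cup\Pi_1\cup\Pi_2$ and back — we conclude that the integral points of $X\setminus(S_B\cup H_X\cup H_X')$ are potentially dense. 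Since this boundary contains $H_X$, a fortiori the integral points of $X\setminus H_X$ are potentially dense.

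The point requiring care is the birational geometry: that the two-ray game on $\tilde X$ involves no flip — equivalently, that the lines of $X$ meeting $\ell$ sweep out an honest divisor, which is ensured by taking $\ell$ of the first type — together with the identification of the images of $E_\ell$, of the two strict transforms, and of the center $B$, including the incidence $B\subset Q$; all of this comes from the standard intersection calculus on $\tilde X$ with $\pi^*H$ and $E_\ell$ (using $(\pi^*H)^3=4$, $(\pi^*H)^2E_\ell=0$, $(\pi^*H)E_\ell^2=-1$, $E_\ell^3=0$). The remaining verification, that $\ell$ and $H_X'$ may be chosen so that the plane–quadric arrangement on the $\mathbb{P}^3$ side satisfies the hypotheses of Theorem~\ref{quadricaeduepiani}, is an open condition and hence harmless.
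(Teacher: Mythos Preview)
Your proof is correct and follows essentially the same route as the paper: blow up a line $\ell\subset H_X$, play the two-ray game to reach $\mathbb{P}^3$, identify $E_\ell$ with a smooth quadric $Q$ and the strict transform(s) of hyperplane section(s) through $\ell$ with plane(s), then invoke Theorem~\ref{quadricaeduepiani}. The only difference is cosmetic: the paper uses just the single hyperplane section $H_X$, landing on $\mathbb{P}^3\setminus(\Pi\cup Q)$ and citing Theorem~\ref{quadricaeduepiani} a fortiori, whereas you throw in an auxiliary $H_X'\supset\ell$ to hit the two-plane configuration $\mathbb{P}^3\setminus(Q\cup\Pi_1\cup\Pi_2)$ on the nose; either way the conclusion is the same.
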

\begin{proof} Fix a general hyperplane section $H_1^{X}\in |H_{X}|$. It is a del-Pezzo surface od degree $4$. We consider the projection from  a line $l\subset H_1^{X}$. It can be factorised in the following way:

\begin{equation}
\label{eq:(puntoquadriccomplex)}
  \xymatrix{ & A \ar[dl]_{f} \ar@{=}[r] & A\ar[dr]^{f'} & \\
  X\ar@{<-->}[rrr]  &  &  & \mathbb P^3,}
\end{equation}
where $f\colon A\to X$ is the blow-up at $l$ and $f'\colon A\to \mathbb P^3$ is the contraction of the strict transform $E'$ of the loci $D_l$ swept by lines in $X$ which intersect $l$.It is well-known that $D_l\in |2H^X|$. Let $H$ and $L$ be the $f$-pull-back and the $f'$-pull-back of the hyperplane section of $X$ and respectively of $\mathbb P^3$. Then $f$ is given by the linear system $|H-E|$. Following Tacheuchi's method see \cite[ Section 2 Projections of V from a point or a conic]{T} it can be shown that $-K_A=H+L$. Then since $-K_A$ is also linearly equivalent to $2H-E$ and to $4L-E'$ it follows that $L=H-E$ and that $H=3L-E'$. Moreover $f'$ contracts $E'$ onto a genus $2$ curve $C$ of degree $5$. In more geometrical terms it is easy to see that $E=\mathbb P(\sN_{l/X})$ is isomorphic to $\mathbb P^1\times\mathbb P^1$, that $f'_{|E}\colon E\to Q\subset \mathbb P^3$ is induced by the natural embedding of $\mathbb P^1\times\mathbb P^1$ into $\mathbb P^3$ and that $C\subset Q$ is an element of $|(2,3)|$. Moreover $E'\in |2H-3E|$.
Now since $l\subset H_1^{X}$ the $f'$-image of the $f$-strict transform of $H_1^{X}$ is a plane $\Pi$.

We consider the bjective morphism
$$
g\colon X\setminus (D_l\cup H_1^{X})\to \mathbb P^3\setminus (\Pi\cup Q)
$$
induced by $f'\circ f^{-1}$. 

By Proposition \ref{quadricaeduepiani} it follows that the set of points of $X$ which are $D_l\cup H_1^{X}$-integral is potentially dense. Then a fortiori those which are  $H_1^{X}$-integral is potentially dense.
\end{proof}

\subsubsection{Smooth quadric complex: a special case of the log-CY case}
We are able to solve the log-CY case thanks to Proposition \ref{quadricaeduepiani} in the case where the two hyperplane sections contain a common line.

\begin{prop}\label{quadriccomplexCY}
Let $X=Q_1\cap Q_2\subset\mathbb P^5$ be a smooth quadric complex. Let $D=H_1^{X}\cup H_2^{X}$ where $H_1^{X}$ and $H_2^{X}$ are two hyperplane sections such that there exists a line $l\subset H_1^{X}\cap H_2^{X}$. Then the subset given by the integral points with respect to $D$ is potentially dense.

\end{prop}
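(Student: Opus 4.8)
The plan is to reproduce, for the two hyperplane sections simultaneously, the birational construction used in the proof of Proposition~\ref{quadcompline}, projecting $X$ from the common line $l\subset H_1^X\cap H_2^X$. The new feature is that now $l$ lies on \emph{both} $H_1^X$ and $H_2^X$, so that after the projection both hyperplane sections will be blown down to planes of $\mathbb{P}^3$, rather than a single one. Concretely, the projection $\pi_l\colon X\dashrightarrow\mathbb{P}^3$ is resolved by
\begin{equation*}
\xymatrix{ & A \ar[dl]_{f} \ar@{=}[r] & A \ar[dr]^{f'} & \\ X \ar@{<-->}[rrr] & & & \mathbb{P}^3,}
\end{equation*}
where $f\colon A\to X$ is the blow-up along $l$, with exceptional divisor $E=\mathbb{P}(\sN_{l/X})\cong\mathbb{P}^1\times\mathbb{P}^1$, and $f'\colon A\to\mathbb{P}^3$ is the divisorial contraction of the strict transform $E'$ of the surface $D_l\in|2H^X|$ swept out by the lines of $X$ meeting $l$. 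As recalled in the proof of Proposition~\ref{quadcompline}, $f'|_E$ realises $E$ as a smooth quadric surface $Q\subset\mathbb{P}^3$, while $E'$ is contracted onto a curve $C\subset Q$ (of genus $2$ and degree $5$), and $f'$ is given by the linear system $|H-E|$.

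First I would extract the relevant isomorphism of open sets. Since $f$ is an isomorphism off $E$ and $f'$ is an isomorphism off $E'$, restricting both to $A\setminus(E\cup E')$ yields
\begin{equation*}
X\setminus(l\cup D_l)\;\cong\;A\setminus(E\cup E')\;\cong\;\mathbb{P}^3\setminus(Q\cup C);
\end{equation*}
as $l\subset D_l$ and $C\subset Q$, this is an isomorphism $X\setminus D_l\cong\mathbb{P}^3\setminus Q$. Next, because $l\subset H_i^X$ the strict transform of $H_i^X$ belongs to $|H-E|$, the very linear system defining $f'$; hence it is the $f'$-pull-back of a plane $\Pi_i\subset\mathbb{P}^3$, and $\Pi_1\neq\Pi_2$ since $H_1^X\neq H_2^X$. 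Removing the two hyperplane sections on the source therefore corresponds to removing $\Pi_1\cup\Pi_2$ on the target, so that $f'\circ f^{-1}$ induces a bijective morphism
\begin{equation*}
g\colon X\setminus(D_l\cup H_1^X\cup H_2^X)\;\longrightarrow\;\mathbb{P}^3\setminus(Q\cup\Pi_1\cup\Pi_2).
\end{equation*}

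Finally I would invoke Theorem~\ref{quadricaeduepiani}: the integral points of $\mathbb{P}^3$ with respect to $Q\cup\Pi_1\cup\Pi_2$ are potentially dense, hence, transporting through $g$, so are the integral points of $X$ with respect to $D_l\cup H_1^X\cup H_2^X$; since integrality with respect to this larger divisor implies integrality with respect to $D=H_1^X\cup H_2^X$, we conclude. The main obstacle is the verification that the configuration $Q\cup\Pi_1\cup\Pi_2$ actually satisfies the hypotheses of Theorem~\ref{quadricaeduepiani}, namely that $\Pi_1\cap Q$ and $\Pi_2\cap Q$ are smooth conics and that $\Pi_1\cap\Pi_2\cap Q$ is proper (two reduced points); this is a normal-crossing/generality statement for $(X,D)$ which one checks for the $\mathbb{K}$-standard members of the family — equivalently, by taking $l$ and the $H_i^X$ sufficiently general among those with $l\subset H_1^X\cap H_2^X$ — and it is precisely here that one rules out the degenerate positions where some $\Pi_i$ becomes tangent to $Q$ or the line $\Pi_1\cap\Pi_2$ lies on $Q$.
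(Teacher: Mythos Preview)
Your proof is correct and follows exactly the paper's approach: project from the common line $l$, obtain the bijective morphism $g\colon X\setminus(D_l\cup H_1^X\cup H_2^X)\to\mathbb{P}^3\setminus(Q\cup\Pi_1\cup\Pi_2)$, and conclude via Theorem~\ref{quadricaeduepiani} together with the a fortiori argument. Your write-up is in fact more detailed than the paper's (which simply refers back to Proposition~\ref{quadcompline}), and your explicit attention to verifying the hypotheses on $Q\cup\Pi_1\cup\Pi_2$ is a point the paper leaves implicit.
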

\begin{proof} As in the proof of Proposition \ref{quadcompline} we consider the projection from  the common line $l\subset H_1^{X}\cap H_2^{X}$ and by the same argument used there we can show that there exists a bijective morphism
$$
g\colon X\setminus (D_l\cup H_1^{X}\cup H_2^{X})\to \mathbb P^3\setminus (Q\cup\Pi_1\cup \Pi_2)
$$
where $\Pi_1$, $\Pi_2$ are respectively the $f'$-image of the $f$-strict transform of $H_1^{X}$, $H_2^{X}$ and $Q$ is the $f'$-image of the $f$-exceptional divisor. By  Proposition \ref{quadricaeduepiani} it follows that the set of rational points of $X$ which are $D_l\cup H_1^{X}\cup H_2^{X}$-integral is potentially dense. Then a fortiori those which are  $H_1^{X}\cup H_2^{X}$-integral is potentially dense.
\end{proof}
\begin{rem} We stress that the same method used in the proof of Proposition \ref{quadriccomplexCY} cannot be applied in the case of a general smooth hyperquadric section $Y$of $X$ and even in the case where the K3 $Y$ contains a line we are reduced to the study of integral points of the complement of $\mP^3$ by the union of a quadric and a quartic whose intersection contains a conic.
\end{rem}

\subsection{The del Pezzo Threefold}
The del Pezzo threefold $B_5\subset\mathbb P^6$ is another rational threefold. Vojta's conjecture predicts that if $D\in |iH_{B_5}|$ where $i=1$ or $i=2$, and $H_{B_5}$ is an hyperplane section of $B_5$, then the integral points with respect to $D$ is potentially dense. 
\subsubsection{A special case of the log-CY case for the del Pezzo Threefold}
We can treat with our method the case where the intersection of the two hyperplane sections contains a smooth conic.
\begin{prop}\label{delpezzoconica}
Let $B_5$ be a del Pezzo threefold. Let $D=H_1^{B_5}\cup H_2^{B_5}$ where $H_1^{B_5}$ and $H_2^{B_5}$ are two hyperplane sections which contain a smooth conic $q$. Then the subset given by the integral points with respect to $D$ is dense. 
\end{prop}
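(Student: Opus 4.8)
The plan is to reduce the assertion to Theorem \ref{quadricaeduepiani} by projecting $B_5\subset\mP^6$ from the common conic $q$, in the same spirit as the proofs of Proposition \ref{dueiperpiani} and Proposition \ref{quadcompline}. Let $f\colon A\to B_5$ be the blow-up along $q$, with exceptional divisor $E=f^{-1}(q)$; since $\rho(A)=2$, the two-ray game furnishes a second extremal contraction $f'\colon A\to Y$, and the first task is to identify it. Writing $H=f^*H_{B_5}$ and setting $L=H-E$, one computes $(H-E)^3=H^3+3H\cdot E^2-E^3=5-6+2=1$ (using $H^2\cdot E=0$, $H\cdot E^2=-2$, $E^3=-2$), which suggests that $Y=\mP^3$ and that $f'$ resolves the linear projection $\mP^6\dashrightarrow\mP^3$ from the plane $\langle q\rangle$, so that $L$ is the $f'$-pull-back of a hyperplane. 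Comparing $-K_A=2H-E$ with $-K_A=4L-E'$ then yields $E'\in|2H-3E|$ for the $f'$-exceptional divisor, i.e. $E'$ is the strict transform of a divisor $D_q\in|{-}K_{B_5}|$ which one recognises geometrically as the locus swept by the lines of $B_5$ meeting $q$ (recall $\Sigma(B_5)\cong\mP^2$ by Proposition \ref{gradomaggioretre}). All of this is classical for the del Pezzo threefold and can be extracted from Takeuchi's study of projections, see \cite{T}.

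Next I would identify the images under $f'$ of the relevant divisors. For $q$ general the normal bundle is $N_{q/B_5}\cong\sO(1)^{\oplus2}$, so $E\cong\mP^1\times\mP^1$ and $f'|_E$ is the Segre embedding of $E$ onto a smooth quadric surface $Q\subset\mP^3$; the divisor $E'$ is contracted by $f'$ to a curve $C$, and, just as in Proposition \ref{quadcompline}, this curve lies on $Q$. The strict transforms $\widehat H_i$ of the two hyperplane sections $H_i^{B_5}$ lie in $|H-E|=|L|$, hence $f'(\widehat H_i)=\Pi_i$ is a plane in $\mP^3$. Since $q\subset D_q$ and $q\subset H_i^{B_5}$, one has $f^{-1}\bigl(D_q\cup H_1^{B_5}\cup H_2^{B_5}\bigr)=E\cup E'\cup\widehat H_1\cup\widehat H_2$; as $f'$ restricts to an isomorphism $A\setminus E'\xrightarrow{\ \sim\ }\mP^3\setminus C$ and $C\subset Q$, composing produces a bijective morphism
\[
g\colon\ B_5\setminus\bigl(D_q\cup H_1^{B_5}\cup H_2^{B_5}\bigr)\ \xrightarrow{\ \sim\ }\ \mP^3\setminus\bigl(Q\cup\Pi_1\cup\Pi_2\bigr).
\]

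To apply Theorem \ref{quadricaeduepiani} it then remains to check that $Q$ is smooth (automatic for $q$ general), that each plane $\Pi_i$ cuts $Q$ in a smooth conic, and that $\Pi_1\cap\Pi_2\cap Q$ consists of two distinct points; the last two conditions amount to saying that no $\Pi_i$ is tangent to $Q$ and that the line $\Pi_1\cap\Pi_2$ (the image of the residual cubic component of $B_5\cap H_1^{B_5}\cap H_2^{B_5}$) meets $Q$ transversally, which hold for $H_1^{B_5},H_2^{B_5}$ in sufficiently general position through $q$. Granting this, Theorem \ref{quadricaeduepiani} gives that the integral points of $\mP^3\setminus(Q\cup\Pi_1\cup\Pi_2)$ are potentially dense; pulling them back through $g$ yields a potentially dense set of integral points of $B_5$ with respect to $D_q\cup H_1^{B_5}\cup H_2^{B_5}$, and these are a fortiori integral with respect to $D=H_1^{B_5}\cup H_2^{B_5}$, which proves the claim.

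The main obstacle is the geometric input of the first two paragraphs: proving that the two-ray game on the blow-up of $B_5$ along $q$ terminates with a divisorial contraction onto $\mP^3$ whose exceptional divisor is precisely the line-swept anticanonical divisor $D_q$, contracted to a curve lying on the quadric $f'(E)$. This is also where the genericity of $q$ enters, to guarantee $N_{q/B_5}\cong\sO(1)^{\oplus2}$, hence $E\cong\mP^1\times\mP^1$ and a smooth $Q$ in the right position with respect to the $\Pi_i$; one may need either to take $q$ general inside $H_1^{B_5}\cap H_2^{B_5}$ or to verify that the required general position is forced by $q$ being a smooth component of the degree-$5$ curve $B_5\cap H_1^{B_5}\cap H_2^{B_5}$.
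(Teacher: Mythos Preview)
Your proposal is correct and follows essentially the same route as the paper's own proof: project $B_5$ from the common conic $q$, resolve via the blow-up $f\colon A\to B_5$, identify the second extremal contraction $f'\colon A\to\mP^3$ with $L=H-E$ and $E'\in|2H-3E|$ the strict transform of the line-swept divisor $D_q\in|2H_{B_5}|$, observe that $E\cong\mP^1\times\mP^1$ maps isomorphically onto a smooth quadric $Q\subset\mP^3$ while the two hyperplane sections through $q$ descend to planes $\Pi_1,\Pi_2$, and conclude by Theorem~\ref{quadricaeduepiani}. Your explicit intersection computation $(H-E)^3=1$ and your discussion of the transversality conditions needed to invoke Theorem~\ref{quadricaeduepiani} (smoothness of $Q\cap\Pi_i$ and properness of $Q\cap\Pi_1\cap\Pi_2$) are in fact more careful than the paper, which simply asserts the bijection $g$ and applies the theorem without verifying these hypotheses.
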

\begin{proof}
We denote by $H^{B_5}$ the hyperplane section of $B_5$ and we consider the projection from  a conic $q\subset H_1^{B_5}\cap H$. It can be factorised in the following way:

\begin{equation}
\label{eq:(puntodelpezzo)}
  \xymatrix{ & A \ar[dl]_{f} \ar@{=}[r] & A\ar[dr]^{f'} & \\
  B_5\ar@{<-->}[rrr]  &  &  & \mathbb P^3,}
\end{equation}
where $f\colon A\to B_5$ is the blow-up at $q$ and $f'\colon A\to \mathbb P^3$ is the contraction of the strict transform $E'$ of the loci $D_q$ swept by lines in $B_5$ which intersect $q$. By c.f. \cite[Proposition 2.1.3, Proposition 2.2.2]{TZ} we know that $E=\mathbb P(\sN_{l/X})$ is isomorphic to $\mathbb P^1\times\mathbb P^1$ and that, $D_p\in |2H^{B_5}|$. Let $H$ and $L$ be the $f$-pull-back and the $f'$-pull-back of the hyperplane section of $B_5$ and respectively of $\mathbb P^3$. Following Tacheuchi's method see \cite[ Section 2 Projections of V from a point or a conic]{T} it can be shown that $-K_A=H+L$. Then since $-K_A$ is also linearly equivalent to $2H-E$ and to $4L-E'$ it follows that $L=H-E$ and that $H=3L-E'$. Moreover $f'$ contracts $E'$ onto a rational curve $C$ of degree $4$ and by adjunction $f_{|E}\colon E\to Q$ is the natural embedding of $\mathbb P^1\times\mathbb P^1$ into $\mathbb P^3$. Finally $C\subset Q$ is an element of $|(1,3)|$ and $E'\in |2H-3E|$.
Now since $q \subset H_1^{X}\cap H^2_X$ the $f'$-image of the $f$-strict transform of  $H_1^{X}$ and $H_2^{X}$ is the plane $\Pi_1$ and respectively the plane $\Pi_2$, ( while the $f'$-image of the $f$-strict transform of the general hyperplane section is a smooth cubic surface containing the curve $C$).

We consider the bijective morphism
$$
g\colon X\setminus (D_q\cup H_1^{X}\cup H_2^{X})\to \mathbb P^3\setminus (Q\cup \Pi_1\cup\Pi_2)
$$
induced by $f'\circ f^{-1}$. By Proposition \ref{quadricaeduepiani} it follows that the set of $\mathbb K$ points of $X$ which are $D_q\cup H_1^{X}\cup H_2^{X}$-integral is dense. Then a fortiori those which are $H_1^{X}\cup H_2^{X}$-integral is dense.
\end{proof}
We point out the reader that Proposition \ref{delpezzoconica} obviously implies the log-Fano case. This case has been studied above in Subsection \ref{casodidelpezzomenouno}. Finally we provide here a third proof of the log-Fano case one to show the reader the differences between our various methods:
\begin{prop}\label{delpezzoretta}
Let $B_5$ be a del Pezzo threefold. Then the subset given by the integral points with respect to a hyperplane section is potentially dense.
\end{prop}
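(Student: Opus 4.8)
The plan is to give a third, purely birational, proof of the log-Fano case for $B_5$, by projecting $B_5$ from a general line contained in a hyperplane section and reducing to the smooth quadric threefold treated in Proposition \ref{dueiperpiani}. This is different both from the argument of Subsection \ref{casodidelpezzomenouno} (which uses that the Hilbert scheme of lines of $B_5$ is $\mP^2$, together with Lemma \ref{lemma-punture}) and from the argument obtained by specializing Proposition \ref{delpezzoconica} (projection from a conic, which reduces to Theorem \ref{quadricaeduepiani}).

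First I would fix a hyperplane section $H_1^{B_5}$ of $B_5$ that is a smooth del Pezzo surface of degree $5$; such a surface contains lines, so I choose a line $\ell\subset H_1^{B_5}$, taking $\ell$ and $H_1^{B_5}$ general so that $N_{\ell/B_5}\cong\sO_{\mP^1}^{\oplus 2}$. I then consider the projection of $B_5\subset\mP^6$ from $\ell$; since the hyperplanes of $\mP^6$ through $\ell$ form a $\mP^4$, this is a rational map to $\mP^4$. As in the proofs of Propositions \ref{quadcompline}, \ref{quadriccomplexCY} and \ref{delpezzoconica}, and following Takeuchi's method \cite[Section 2]{T}, I expect it to be resolved by a two-ray game
\begin{equation*}
  \xymatrix{ & A \ar[dl]_{f} \ar@{=}[r] & A\ar[dr]^{f'} & \\
  B_5\ar@{<-->}[rrr]  &  &  & Q_3,}
\end{equation*}
where $f\colon A\to B_5$ is the blow-up of $\ell$, with exceptional divisor $E\cong\mP^1\times\mP^1$, and $f'\colon A\to Q_3$ is a divisorial contraction onto a smooth quadric threefold $Q_3\subset\mP^4$ which contracts the strict transform $E'$ of the surface $D_\ell$ swept by the lines of $B_5$ meeting $\ell$ onto a rational normal cubic $C\subset Q_3$. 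Writing $H$ for the $f$-pull-back of the hyperplane class of $B_5$ and $L$ for the $f'$-pull-back of the hyperplane class of $Q_3$, the numerics are forced: from $\deg N_{\ell/B_5}=0$ one gets $(H-E)^3=H^3-3H^2E+3HE^2-E^3=5-0+3\cdot(-1)-0=2$, so the image is indeed a quadric threefold in $\mP^4$; moreover $L=H-E$, and comparing $-K_A=2H-E$ with $-K_A=3L-E'$ gives $E'=H-2E$, so $D_\ell$ is a hyperplane section of $B_5$, singular along $\ell$. Consequently $f'(E)$ has degree $L^2\cdot E=2$ in $\mP^4$, hence is a hyperplane section of $Q_3$, and $\widetilde{H_1^{B_5}}$ has class $L=f'^*H_{Q_3}$, so its $f'$-image is a hyperplane section of $Q_3$ as well; call these $H_a^{Q_3}$ and $H_b^{Q_3}$.

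Setting $\Gamma:=f(E')$ and recalling $\ell\subset H_1^{B_5}$, the composition $f'\circ f^{-1}$ then induces a bijective morphism
\begin{equation*}
g\colon B_5\setminus\big(H_1^{B_5}\cup\Gamma\big)\ \longrightarrow\ Q_3\setminus\big(H_a^{Q_3}\cup H_b^{Q_3}\big).
\end{equation*}
By Proposition \ref{dueiperpiani} the integral points of $Q_3$ with respect to two hyperplane sections are potentially dense; transporting this through $g$ shows that the integral points of $B_5$ with respect to $H_1^{B_5}\cup\Gamma$ are potentially dense, and a fortiori the integral points of $B_5$ with respect to the hyperplane section $H_1^{B_5}$ alone are potentially dense. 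As $H_1^{B_5}$ was chosen general, this proves the claim.

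The step I expect to be the main obstacle is the birational analysis of the projection from $\ell$: namely, that blowing up $\ell$ already resolves it, that the opposite contraction $f'$ of the two-ray game is divisorial with smooth quadric-threefold image (rather than, say, a small contraction requiring a flip, or a Mori fibration), and the precise identification of $f'(E)$ and $f'(\widetilde{H_1^{B_5}})$ as hyperplane sections of $Q_3$, together with the verification that $g$ is an everywhere-defined bijective morphism onto the indicated open set. All of this is governed by Takeuchi's classification of the elementary links issuing from $B_5$ \cite{T}, exactly as in the already-proved Propositions \ref{quadcompline}, \ref{quadriccomplexCY} and \ref{delpezzoconica}; the only genericity needed is that $\ell$ and $H_1^{B_5}$ be general, ensuring that $Q_3$ is smooth and that $H_a^{Q_3}\ne H_b^{Q_3}$, which is all that Proposition \ref{dueiperpiani} requires.
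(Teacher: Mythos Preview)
Your proposal is correct and follows essentially the same approach as the paper: project $B_5$ from a line $\ell\subset H_1^{B_5}$, resolve via the blow-up $A\to B_5$ along $\ell$, identify the second contraction $A\to Q_3$ with relations $L=H-E$ and $H=2L-E'$, and obtain a bijection between $B_5\setminus(D_\ell\cup H_1^{B_5})$ and $Q_3$ minus two hyperplane sections, then invoke the quadric-threefold result. The only cosmetic differences are that the paper cites Proposition~\ref{treiperpiani} rather than Proposition~\ref{dueiperpiani} (equivalent here, since the two-hyperplane case of \ref{dueiperpiani} is reduced to \ref{treiperpiani}), and the paper records $E\cong\mF_1$ whereas you take $\ell$ general so that $E\cong\mP^1\times\mP^1$; neither point affects the argument.
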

\begin{proof}
The hyperplane section $H^{B_5}$ is a del Pezzo surface of degree $5$. Consider the projection from a line $l\subset H_1^{B_5}$. It can be factorised in the following way:

\begin{equation}
\label{eq:(rettadelpezzo)}
  \xymatrix{ & A \ar[dl]_{f} \ar@{=}[r] & A\ar[dr]^{f'} & \\
  B_5\ar@{<-->}[rrr]  &  &  & \mathbb Q^3,}
\end{equation}
where $f\colon A\to B_5$ is the blow-up at $l$ and $f'\colon A\to \mathbb P^3$ is the contraction of the strict transform $E'$ of the loci $D_l$ swept by lines in $B_5$ which intersect $l$ to a (rational) twisted cubic $\Gamma$. Indeed by \cite[Sec. 2]{FN} and \cite[sec. 1]{Il}, c.f. see: \cite[Proposition 2.1.3]{TZ} we know that $E=\mathbb P(\sN_{l/X})$ is isomorphic to $\mathbb F_1$ and that, $D_l\in |H^{B_5}|$. Let $H$ and $L$ be the $f$-pull-back and the $f'$-pull-back of the hyperplane section of $B_5$ and respectively of $\mathbb P^3$. Following Tacheuchi's method see \cite[ Section 2 Projections of V from a point or a conic]{T}, it can be shown that $-K_A=H+L$. Then since $-K_A$ is also linearly equivalent to $2H-E$ and to $3L-E'$ it follows that $L=H-E$ and that $H=2L-E'$.  Since $l \subset H_1^{B_5}$ the $f'$-image of the $f$-strict transform of $H_1^{B_5}$ is an hyperplane $H_1^{Q_3}$ of $Q_3$. It is now easy to show that $f'$ contracts $E'$ onto $\Gamma$ and by adjunction $f'_{|E}\colon E\to Q_3\subset\mathbb P^4$ is the natural embedding of $\mathbb P^1\times\mathbb P^1$ into the hyperplane $\mP^3$ spanned by $\Gamma$. We set $f'_{|E}(E)=H_2^{Q_3}$. Finally $\Gamma\subset Q_3$ is an element of $|(1,3)|$ in $H_2^{Q_3}$; we stress that $H_2^{Q_3}\subset\mathbb P^3\cap Q_3\subset\mathbb P^4$. We consider the bijective morphism
$$
g\colon X\setminus (D_l\cup H_1^{B_5})\to Q_3\setminus (H_1^{Q_3}\cup H_2^{Q_3} )
$$
induced by $f'\circ f^{-1}$. By  Proposition \ref{treiperpiani} it follows that the set of rational points of $B_5$ which are $D_l\cup H_1^{B_5}$-integral is dense. Then a fortiori those which are $H_1^{B_5}$-integral is dense.
\end{proof}
\begin{rem} Although the statement of Proposition \ref{delpezzoretta} has perhaps even simpler proofs, its proof shows how through a chain of suitable birational moves it is possible to obtain results on the density of integer points.
\end{rem}

\subsection{The complement of the singular cubic threefold by two hyperplanes}
 The following case is a bit different since it deals with a singular rational variety. Moreover we need to assume that both the two hyperplane sections passes through the singular point.

\begin{prop}\label{cubic threefold}
Let $B(3)$ be a cubic threefold with a O.D.P. Let $D=H_1^{B(3)}\cup H_2^{B(3)}$ where $H_1^{B(3)}$, $H_2^{B(3)}$ are two hyperplane sections passing through the singular point. Then the subset given by the integral points with respect to $D$ is potentially dense.
\end{prop}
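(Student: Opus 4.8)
The plan is to follow the birational strategy of Propositions \ref{dueiperpiani} and \ref{delpezzoretta}: a cubic threefold with an ordinary double point is rational via the projection from the node, and this projection is a birational link that identifies an open subset of $B(3)$ with the complement in $\mP^3$ of a smooth quadric and two planes, so that Theorem \ref{quadricaeduepiani} applies.

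First I would fix homogeneous coordinates on $\mP^4$ with the node at $O=[0:0:0:0:1]$, so that $B(3)=\{x_4F_2(x_0,\dots,x_3)+F_3(x_0,\dots,x_3)=0\}$ with $F_2$ a nondegenerate quadratic form (this nondegeneracy being exactly the O.D.P.\ hypothesis) and $F_3$ a cubic form. Let $f\colon A\to B(3)$ be the blow-up of $O$; since an ordinary double point is locally the cone over a smooth quadric surface, $A$ is smooth and the exceptional divisor $E$ is isomorphic to $\mP^1\times\mP^1$. The next step is to run the two-ray game on the rank-two group $\Pic(A)$: the second extremal contraction $f'\colon A\to\mP^3$ is the resolution of the projection $B(3)\dashrightarrow\mP^3$ from $O$; it contracts the strict transform $E'$ of the surface $R\subset B(3)$ swept out by the lines of $B(3)$ through $O$, sending $E'$ onto the curve $\Gamma=\{F_2=F_3=0\}\subset\mP^3$ (a $(2,3)$ complete intersection, of genus $4$), and it maps $E$ isomorphically onto the smooth quadric surface $\sQ:=\{F_2=0\}\subset\mP^3$; note that $\Gamma\subset\sQ$. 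In other words $A$ is simultaneously the blow-up of $B(3)$ at $O$ and the blow-up of $\mP^3$ along $\Gamma$.

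Next I would track the two hyperplane sections. Since $H_i^{B(3)}$ passes through $O$ it is cut on $B(3)$ by a linear form in $x_0,\dots,x_3$, and a direct computation with the inverse rational map $\mP^3\dashrightarrow B(3)$, $[x]\mapsto[x_0F_2:\dots:x_3F_2:-F_3]$, shows that $f'$ sends the strict transform of $H_i^{B(3)}$ onto a plane $\Pi_i\subset\mP^3$; for $H_1^{B(3)},H_2^{B(3)}$ in general position (so that $D$ has normal crossings) the line $\Pi_1\cap\Pi_2$ meets $\sQ$ in two distinct points. As $f$ contracts $E$ to $O\in H_1^{B(3)}$ and $E'$ to $R$, while $f'$ contracts $E'$ to $\Gamma\subset\sQ$ and nothing else, the composite $g:=f'\circ f^{-1}$ is a bijective morphism
\[
g\colon B(3)\setminus\bigl(R\cup H_1^{B(3)}\cup H_2^{B(3)}\bigr)\ \xrightarrow{\ \sim\ }\ \mP^3\setminus(\sQ\cup\Pi_1\cup\Pi_2).
\]
By Theorem \ref{quadricaeduepiani} the integral points on the right-hand side are potentially dense, and transporting them through $g$ (after the usual enlargement of the set $S$) yields potential density of the integral points of $B(3)\setminus(R\cup H_1^{B(3)}\cup H_2^{B(3)})$; a fortiori the integral points of $B(3)$ with respect to the smaller divisor $H_1^{B(3)}\cup H_2^{B(3)}$ are potentially dense.

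The main obstacle is the birational bookkeeping of the second and third paragraphs: one must verify that the projection from the node is genuinely the link described—in particular that $f'(E)$ is a smooth quadric and that $f'$ contracts only the strict transform of $R$, to a curve lying on that quadric—and that for general $H_1^{B(3)},H_2^{B(3)}$ through $O$ the configuration $\sQ\cup\Pi_1\cup\Pi_2$ satisfies the proper-intersection hypothesis of Theorem \ref{quadricaeduepiani}. These are essentially classical facts on the rationality of the nodal cubic threefold, so the difficulty is expository rather than substantial; the genuinely new ingredient is Theorem \ref{quadricaeduepiani} itself.
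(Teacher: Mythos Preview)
Your proposal is correct and follows essentially the same route as the paper: project from the node, factor through the blow-up $A$ to obtain the link $B(3)\dashrightarrow\mP^3$, identify the images as a smooth quadric $Q=\{F_2=0\}$ and two planes $\Pi_1,\Pi_2$, and then invoke Theorem~\ref{quadricaeduepiani}. The only cosmetic difference is that the paper carries out the bookkeeping via the explicit projective-bundle model $\mP(\sO_{\mP^3}\oplus\sO_{\mP^3}(1))\to\mP^4$ and the Picard relations $L=H-E$, $H=3L-E'$, whereas you appeal directly to the two-ray game and the classical rationality construction; your surface $R$ is the paper's $T=f(E')$.
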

\begin{proof} We consider the point $P\in\mathbb P^4$ where B(3) is singular. Let $\pi\colon \mathbb P=\mathbb P(\sO_{\mathbb P^3}\oplus\sO_{\mathbb P^3}(1))\to \mathbb P^4$ be the blow-up of $P\in\mathbb P^4$ at $P$. We denote by $A$ the strict transform of $B(3)$. Let $\pi'\colon \mathbb P\to\mathbb P^3$ be the natural projection and let $H$, $L$ be the pull-back of the hyperplane section of respectively $\mathbb P^4$, $\mathbb P^3$. Note that $H$ is a section of the tautological line bundle $\sO_\mathbb P(1)$. 
Let $w_{\infty}\in H^0(\mathbb P,\sO_\mathbb P(1)\otimes_\mathbb P\sO_\mathbb P(-L))$. It is an easy check to verify that $A$ is a smooth element of $|H+2L|$. More precisely if 
$\mathbb P^4={\rm{Proj}}(\mathbb C[u_0,u_1,u_2,u_3,u_4])$, $\mathbb P^3={\rm{Proj}}(\mathbb C[x_0,x_1,x_2,x_3])$, $P=[0:0:0:0:1])$ then 
$$\pi\colon ([w_\infty,w],[x_0,x_1,x_2,x_3])\mapsto [w_{\infty} x_0,w_{\infty} x_1, w_{\infty} x_2, w_{\infty}x_3,w]=[u_0,u_1,u_2,u_3,u_4]$$ while $\pi' \colon ([w_\infty,w],[x_0,x_1,x_2,x_3])\mapsto [x_0,x_1,x_2,x_3]$. In particular we can assume $B(3)=(u_4f_2(u_0,u_1,u_2,u_3)+f_3(u_0,u_1,u_2,u_3)=0)$ where $f_2$, $f_3$ are general homogeneous polynomial in the variables $u_0,u_1,u_2,u_3$ o degree $2$  and respectively $3$ and  
$$
A=wf_2(x_0,x_1,x_2,x_3)+w_{\infty} f_3(x_0,x_1,x_2,x_3)=0
$$
where $w\in H^0(\mathbb P,\sO_\mathbb P(1))$.
We consider the following diagram
\begin{equation}
\label{eq:(puntocubicasingolae)}
  \xymatrix{ & A \ar[dl]_{f} \ar@{=}[r] & A\ar[dr]^{f'} & \\
  B(3)\ar@{<-->}[rrr]  &  &  & \mathbb P^3,}
\end{equation}
where $f\colon A\to B(3)$ is the blow-up of $B(3)$ at $P$ (and it is induced by $\pi$) while $f'\colon A\to \mathbb P^3$ is the restriction of $\pi'$ to $A$. By adjunction it holds that $-K_A=H+L$ where, by abuse of notation, we still denote by $H$, $L$ the pull-back of the hyperplane sections of respectively $B(3)$ and $\mathbb P^3$. Let $E:=(w_\infty=0)_{|A}$ be the exceptional divisor of $f$, which is easily seen to be a quadric and let $E'$ be $\pi^{'-1}(C)$ where $C\subset\mP^3$ is the smooth genus-$4$ curve given by $ f_2(x_0,x_1,x_2,x_3)= f_3(x_0,x_1,x_2,x_3)=0$. Clearly $E'$ is the unique divisor on $A$ which is contracted by $f'$. Moreover $f'$ is the blow-up at $C$. A simple computation shows that $L=H-E$ and $H=3L-E'$. In particular $H_1^{Q_3}$ and and $H_2^{Q_3}$  are transformed into the planes $\Pi_1$ and respectively $\Pi_2$ while $f'(E)=Q$ is obviously the quadric $Q:=(f_2(x_0,x_1,x_2,x_3)=0)\subset\mathbb P^3$. We set $T:=f(E')$. We have the bijective morphism
$$
g\colon B(3)\setminus (T\cup H_1^{B(3)}\cup H_2^{B(3)})\to \mathbb P^3\setminus (\Pi_1\cup\Pi_2\cup Q)
$$
induced by $f'\circ f^{-1}$. By Proposition \ref{quadricaeduepiani} it follows that the set of rational points of $B(3)$ which are $(T\cup H_1^{Q_3}\cup Q_2^{Q_3})$-integral is potentially dense. Then a fortiori those which are $H_1^{Q_3}\cup Q_2^{Q_3}$-integral is potentially dense.
\end{proof}

\begin{ackn}
This research is supported by PRIN 2017 - Geometric, Algebraic and Analytic Aspects of Arithmetics. \end{ackn}

\end{document}